\documentclass[a4, 11pt] {amsart} 
\usepackage{amssymb,times, amscd,amsmath,amsthm, xypic}
\usepackage{geometry}
\geometry{left=3cm, right=3cm, bottom=1 in , top=1.3in,  includefoot}

\author{Laurentiu Maxim$^{(*)}$}
\address{Department of Mathematics,
University of Wisconsin-Madison,
480 Lincoln Drive, 
Madison WI 53706-1388, USA}
\email
{maxim@math.wisc.edu}

\author{Shoji Yokura$^{(**)}$}

\address{Department of Mathematics and Computer Science,
Graduate School of Science and Engineering\\ Kagoshima University,
1-21-35 Korimoto, 
Kagoshima 890-0065, Japan}
\email
{yokura@sci.kagoshima-u.ac.jp}

\date{}

\thanks{(*) Partially supported by the Romanian Ministry of National Education, CNCS-UEFISCDI, grant PN-III-P4-ID-PCE-2016-0030. \\
\quad (**) Partially supported by JSPS KAKENHI 16H03936}

\title 
[]{Homological congruence formulae \\ for characteristic classes of singular varieties}

\begin{document} 
\numberwithin{equation}{section}
\newtheorem{thm}[equation]{Theorem}
\newtheorem{pro}[equation]{Proposition}
\newtheorem{prob}[equation]{Problem}
\newtheorem{qu}[equation]{Question}
\newtheorem{cor}[equation]{Corollary}
\newtheorem{con}[equation]{Conjecture}
\newtheorem{lem}[equation]{Lemma}
\theoremstyle{definition}
\newtheorem{ex}[equation]{Example}
\newtheorem{defn}[equation]{Definition}
\newtheorem{ob}[equation]{Observation}
\newtheorem{rem}[equation]{Remark}
\renewcommand{\rmdefault}{ptm}
\def\alp{\alpha}
\def\be{\beta}
\def\jeden{1\hskip-3.5pt1}
\def\om{\omega}
\def\bigstar{\mathbf{\star}}
\def\ep{\epsilon}
\def\vep{\varepsilon}
\def\Om{\Omega}
\def\la{\lambda}
\def\La{\Lambda}
\def\si{\sigma}
\def\Si{\Sigma}
\def\Cal{\mathcal}
\def\m {\mathcal}
\def\ga{\gamma}
\def\Ga{\Gamma}
\def\de{\delta}
\def\De{\Delta}
\def\bF{\mathbb{F}}
\def\bH{\mathbb H}
\def\bPH{\mathbb {PH}}
\def \bB{\mathbb B}
\def \bA{\mathbb A}
\def \bC{\mathbb C}
\def \bOB{\mathbb {OB}}
\def \bM{\mathbb M}
\def \bOM{\mathbb {OM}}
\def \mA{\mathcal A}
\def \mB{\mathcal B}
\def \mC{\mathcal C}
\def \mR{\mathcal R}
\def \mH{\mathcal H}
\def \mM{\mathcal M}
\def \mM{\mathcal M}
\def \mT{\mathcal {T}}
\def \mAB{\mathcal {AB}}
\def \bK{\mathbb K}
\def \bG{\mathbf G}
\def \bL{\mathbb L}
\def\bN{\mathbb N}
\def\bR{\mathbb R}
\def\bP{\mathbb P}
\def\bZ{\mathbb Z}
\def\bC{\mathbb  C}
\def \bQ{\mathbb Q}
\def \mh{\mbox{MHM}}
\newcommand{\bb}[1]{\mbox{$\mathbb{#1}$}}

\def\op{\operatorname}

\maketitle

\begin{abstract} For a pair $(f, g)$ of morphisms $f:X \to Z$ and $g:Y \to Z$ of (possibly singular) complex algebraic varieties $X,Y,Z$, we present congruence formulae for the difference $f_*T_{y*}(X) -g_*T_{y*}(Y)$ of pushforwards of the corresponding motivic Hirzebruch classes $T_{y*}$. If we consider the special pair of a fiber bundle $F \hookrightarrow E \to B$ and the projection $pr_2:F \times B \to B$ as such a pair $(f,g)$, then we get a congruence formula for the difference $f_*T_{y*}(E) -\chi_y(F)T_{y*}(B)$, which at degree level yields a congruence formula for $\chi_y(E) -\chi_y(F)\chi_y(B)$, expressed in terms of the Euler--Poincar\'e characteristic, Todd genus and signature in the case when $F, E, B$ are non-singular and compact. We also extend the finer congruence identities of Rovi--Yokura to the singular complex projective situation, by using the corresponding intersection (co)homology invariants.
\end{abstract}


\section{Introduction}

It is well-known that the Euler--Poincar\'e characteristic is multiplicative, i.e., $\chi(X \times Y) =\chi(X)\chi(Y)$. Moreover, $\chi$ is multiplicative for \emph{any} topological fiber bundle, i.e., if $F \hookrightarrow E \to B$ is a (topological) fiber bundle with total space $E$,  fiber space $F$ and base space $B$, then $\chi(E) = \chi(F)\chi(B)$. 

The signature of closed oriented manifolds is also multiplicative, i.e.,  $\sigma(X \times Y)=\sigma(X)\sigma(Y)$, but unlike the Euler-Poincar\'e characteristic the signature  is not necessarily multiplicative for fiber bundles, unless certain conditions are satisfied. For example, S. S. Chern, F. Hirzebruch and J.-P. Serre \cite{CHS} proved the following result:
\begin{thm}[Chern--Hirzebruch--Serre]\label{chs} Let $F \hookrightarrow E \to B$ be a fiber bundle of closed oriented manifolds. If the fundamental group $\pi_1(B)$ of the base space $B$ acts trivially on the cohomology group $H^*(F; \mathbb R)$ of the fiber space $F$ (e.g., if $B$ is simply-connected), then 
$\sigma(E) =\sigma(F)\sigma(B).$
\end{thm}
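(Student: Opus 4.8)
The plan is to run the cohomology Leray--Serre spectral sequence of the fibration with real coefficients and to track the Poincar\'e duality pairing through its pages. First I would record that, because $\pi_1(B)$ acts trivially on $H^*(F;\mathbb{R})$, the local coefficient system on the $E_2$-page is constant, so that
\[
E_2^{p,q} \cong H^p(B;\mathbb{R}) \otimes H^q(F;\mathbb{R}),
\]
and the whole spectral sequence becomes a spectral sequence of graded-commutative $\mathbb{R}$-algebras converging to $H^*(E;\mathbb{R})$. Since $E$, $F$ and $B$ are closed and oriented, the signature of each is the signature of the symmetric bilinear form $(x,y)\mapsto \langle x\cup y,[\,\cdot\,]\rangle$ on total cohomology; away from the middle total degree this form splits into hyperbolic summands of signature zero, so only the self-dual part actually contributes.

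Next I would equip the spectral sequence with a compatible Poincar\'e duality structure. Writing $n=\dim B$ and $m=\dim F$, cup product followed by evaluation on the fundamental classes gives a nondegenerate pairing $E_2^{p,q}\otimes E_2^{\,n-p,\,m-q}\to \mathbb{R}$, with respect to which each differential $d_r$ is adjoint to itself up to sign. The key algebraic input is that a finite-dimensional graded real vector space carrying a nondegenerate symmetric form together with a square-zero differential that is (skew-)adjoint for the form has the same signature as its homology, because the acyclic part decomposes into hyperbolic planes. Applying this observation page by page, I would conclude that the signature of the pairing is constant along the spectral sequence, whence
\[
\sigma(E)=\sigma(E_\infty)=\sigma(E_2).
\]

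Finally, on the $E_2$-page the pairing is, thanks to the triviality of the monodromy and the compatibility of the orientations of $E$, $F$ and $B$, precisely the tensor product of the Poincar\'e pairings of $B$ and of $F$; its signature is therefore the product $\sigma(B)\,\sigma(F)$, by the multiplicativity of signatures under tensor products of symmetric forms. Combined with the previous step this yields $\sigma(E)=\sigma(F)\sigma(B)$, and the simply-connected case follows as a special instance. The main obstacle is the middle step: one must check, with the correct signs, that every $d_r$ is adjoint for the duality pairing so that the ``signature passes to homology'' lemma applies, and one must verify that the induced form on the associated graded of the filtration on $H^*(E)$ genuinely coincides with the $E_\infty$-pairing rather than differing by lower-filtration corrections. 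Arranging the grading conventions so that the relevant symmetric pairing sits in total degree $\dim E=n+m$, with the off-middle pieces contributing trivially, is where the bookkeeping must be carried out with care.
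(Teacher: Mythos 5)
The paper does not prove this theorem: it is quoted as a classical result with a citation to \cite{CHS}, and your proposal reproduces, in outline, exactly the argument of that original paper --- the Leray--Serre spectral sequence with constant real coefficients, a Poincar\'e-duality pairing on each page for which the differentials $d_r$ are (skew-)adjoint derivations, the lemma that the signature survives passage to homology, the identification of the $E_2$-pairing with the tensor product of the Poincar\'e pairings of $B$ and $F$, and the final comparison of the $E_\infty$-pairing with the filtered form on $H^*(E)$. Your outline is sound and matches the source; the two steps you flag as requiring care (signs in the adjointness of $d_r$, and the passage from the associated graded form back to $\sigma(E)$) are precisely the technical content carried out in \cite{CHS}.
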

M. Atiyah \cite{At}, F. Hirzebruch \cite {Hir0} and K. Kodaira \cite{Ko} independently gave examples of differentiable fibre bundles $F \hookrightarrow E \to B$ for which $\sigma(E) \not =\sigma(F)\sigma(B)$. These examples are all of real dimension $4$, with $\sigma(E) \not = 0$. However, $\op{dim}_{\mathbb R} F = \op{dim}_{\mathbb R}B =2$, so $\sigma(F) = \sigma(B) =0$ by the definition of the signature (which is defined to be zero if the real dimension of the manifold is not divisible by $4$). Moreover, M. Atiyah \cite{At} and W. Meyer \cite{Me72} obtained formulae for the monodromy contribution in the deviation of such ``multiplicativity results''. In particular, Theorem \ref{chs} is already true for an even dimensional fiber $F$, if $\pi_1(B)$ acts trivially on the middle dimensional cohomology
 $H^{\dim(F)/2}(F,\mathbb{R})$ of the fiber.
H. Endo \cite{Endo}  and W. Meyer \cite{Mey}  studied further surface bundles over surfaces, and they showed that the signature of such fiber bundles is always divisible by $4$, i.e., $\sigma(E) \equiv 0 \op{mod} 4.$ In particular, for such surface bundles over surfaces one also has that $\sigma(E)\equiv \sigma(F)\sigma(B) \, \, \op{mod} 4$.

More generally, I. Hambleton, A. Korzeniewski and A. Ranicki  \cite{HKR}  generalized the latter observation to $PL$-bundles of any dimension, and proved the following:
\begin{thm}[Hambleton--Korzeniewski--Ranicki]
For a $PL$ fiber bundle $F \hookrightarrow E \to B$ of closed, connected, compatibly oriented $PL$ manifolds, one has: $\sigma(E) \equiv \sigma(F) \sigma(B) \op{mod} 4.$
\end{thm}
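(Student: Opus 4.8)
The plan is to reduce the congruence to a statement about the monodromy of the bundle on the middle-dimensional cohomology of the fiber, and then to an algebraic divisibility-by-$4$ statement for symmetric forms carrying a compatible group action. First I would dispose of the dimension bookkeeping: writing $\dim E=\dim F+\dim B$, the congruence is vacuous unless $\dim E\equiv 0\ (\op{mod} 4)$, and within this range the only genuinely nontrivial cases are $(\dim F,\dim B)\equiv(0,0)$ and $(\dim F,\dim B)\equiv(2,2)\ (\op{mod} 4)$. In the latter case $\sigma(F)=\sigma(B)=0$, so the assertion is that $\sigma(E)\equiv 0\ (\op{mod} 4)$; this already contains the surface-bundle-over-surface prototype, where divisibility by $4$ is due to Meyer and Endo, and it signals that the real content lives entirely in the monodromy.

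Next I would set up the twisted-signature decomposition following Chern--Hirzebruch--Serre and Meyer. Using the Leray--Serre spectral sequence of $F\hookrightarrow E\to B$ with the local systems $\mathcal H^q:=\mathcal H^q(F;\mathbb R)$, together with Poincar\'e duality on $E$, $F$ and $B$, the intersection form on $H^{\dim E/2}(E;\mathbb R)$ is block-diagonalized by the Leray filtration, and its signature is a sum of signatures of the induced pairings on the pieces $E_\infty^{p,q}$ that are dual to $E_\infty^{\dim B-p,\,\dim F-q}$. For the trivial bundle $F\times B$ the local systems are untwisted and one recovers $\sigma(F\times B)=\sigma(F)\sigma(B)$ on the nose (Theorem \ref{chs}). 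Hence the deviation $\sigma(E)-\sigma(F)\sigma(B)$ equals the total \emph{twisting defect}: the difference between the signature of $B$ with coefficients in $\mathcal H^{\dim F/2}$ and the corresponding untwisted signature, the other bidegrees cancelling in pairs. This identifies the problem as a congruence for twisted signatures.

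The engine of the argument is then purely algebraic. The twisting defect is the signature of the $(-1)^{\dim F/2}$-symmetric intersection form $\langle\,,\,\rangle_F$ on $H^{\dim F/2}(F;\mathbb Z)/\mathrm{tors}$, which is preserved by the monodromy representation $\rho:\pi_1(B)\to \op{Aut}(H^{\dim F/2}(F;\mathbb Z),\langle\,,\,\rangle_F)$; packaging this as a symmetric complex over $\mathbb Z[\pi_1 B]$ and comparing its augmentation with that of the trivial coefficient system, I would invoke the multiplicativity of the symmetric signature in $L$-theory (Ranicki, L\"uck--Ranicki) to exhibit the defect as the image, under the augmentation $L^0(\mathbb Z[\pi_1 B])\to L^0(\mathbb Z)=\mathbb Z$, of a class lying in the kernel of the rank homomorphism. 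The required congruence then becomes: such a ``rank-$0$'' symmetric form over $\mathbb Z$ equipped with an isometric action has even signature, with the finer mod-$4$ vanishing coming from the divisibility by $4$ of the associated Gauss--Brown invariant of the $\mathbb Z/2$-linking form.

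The hard part is precisely this refinement from mod $2$ to mod $4$. That the defect is even is elementary, reflecting a parity/Euler-characteristic count; but the passage to mod $4$ cannot see only the rank and genuinely uses the isometric monodromy action, which forces the relevant linking form on the $2$-torsion to split, so that its Gauss--Brown invariant is divisible by $4$. I would therefore expect to spend most of the effort establishing the algebraic lemma that a nonsingular symmetric form over $\mathbb Z$ which becomes metabolic after reduction modulo $2$ and carries a compatible isometry has signature $\equiv 0\ (\op{mod} 4)$, and in verifying that the monodromy of a genuine $PL$ bundle supplies exactly this structure --- with Meyer's divisibility of his signature cocycle by $4$ serving as the base case that anchors an induction on $\dim B$.
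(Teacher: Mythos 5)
You should first note that the paper never proves this statement: it is quoted from Hambleton--Korzeniewski--Ranicki \cite{HKR}, and the paper's own machinery (the duality $\chi^p=(-1)^n\chi^{n-p}$ of Theorem \ref{duality} fed into congruences such as Lemma \ref{lem1}) only reproves it in the \emph{complex algebraic} category, following \cite{Yo2,RY}. So your PL argument must stand on its own. Its first half is reasonable: the Chern--Hirzebruch--Serre filtration argument together with Meyer's theorem identifies $\sigma(E)$ with the signature of the twisted form on $H^{\dim B/2}\bigl(B;\mathcal H^{\dim F/2}(F;\mathbb R)\bigr)$ when $\dim F$ and $\dim B$ are even, gives $\sigma(E)=0$ when both are odd (no self-paired graded piece), and hence exhibits $\sigma(E)-\sigma(F)\sigma(B)$ as a twisted-minus-untwisted signature of the base; the mod $2$ congruence is indeed elementary from $\sigma\equiv\chi \op{mod} 2$ and multiplicativity of $\chi$.

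The genuine gap is in the engine that is supposed to upgrade mod $2$ to mod $4$. Your key algebraic lemma --- ``a nonsingular symmetric form over $\mathbb Z$ which becomes metabolic after reduction modulo $2$ and carries a compatible isometry has signature $\equiv 0 \op{mod} 4$'' --- is false. The form $\langle 1\rangle\oplus\langle 1\rangle$ is unimodular and symmetric, its mod $2$ reduction is metabolic (the diagonal vector spans a self-orthogonal half-rank subspace), and it admits plenty of isometric group actions (the swap, sign changes), yet its signature is $2$. The same example lies in the kernel of the rank homomorphism on the Witt group, so the augmentation packaging $L^0(\mathbb Z[\pi_1 B])\to L^0(\mathbb Z)$ cannot rescue the argument: ``rank $0$'' only recovers $\sigma\equiv \op{rank}\equiv 0 \op{mod} 2$, which you already had. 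Moreover, the ``multiplicativity of the symmetric signature in $L$-theory'' you invoke is not a theorem --- if the symmetric signature were multiplicative for fibrations, the signature itself would be, contradicting Atiyah's examples; what L\"uck--Ranicki prove is a transfer formula whose deviation from multiplicativity is precisely the problem at hand. What a genuine PL bundle supplies, and what the mod $4$ statement actually requires, is far more than an isometric monodromy action on $\bigl(H^{\dim F/2}(F;\mathbb Z)/\mathrm{tors},\langle\,,\,\rangle\bigr)$: the proof in \cite{HKR} needs the chain-level symmetric Poincar\'e structure over $\mathbb Z[\pi_1(B)]$, Korzeniewski's absolute Whitehead torsion, and a Pontryagin-square/Brown--Kervaire analysis in the spirit of Morita's theorem. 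The sharpness of the result confirms that no soft Witt-group argument can work: by \cite{Rov,Rov2} the mod $8$ defect equals $4\cdot(\text{Arf invariant})$ and can be nonzero, so any correct mod $4$ proof must detect exactly this finer structure. Finally, the proposed induction on $\dim B$ ``anchored by Meyer's cocycle'' has no mechanism behind it: nothing in a PL bundle lowers the base dimension while controlling the twisted signature.
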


In \cite{CMS} (also see \cite {CLMS1}, \cite{CLMS2}, \cite{MS2}) S. Cappell, L. Maxim and J. Shaneson have extended  Theorem \ref{chs} to the Hirzebruch $\chi_y$-genus (see \S2 for a definition) in the context of a ``complex algebraic fiber bundle" $F \hookrightarrow E \to B$, by which is meant an algebraic morphism $\pi: E \to B$ of compact complex algebraic varieties, which is also a topological fiber bundle, such that the compact complex variety $F$ is isomorphic to a fiber of $\pi$  over every connected component of $B$.
\begin{thm}[Cappell--Maxim--Shaneson]\label{CMS} Let $F \hookrightarrow E \to B$ be a smooth complex algebraic fiber bundle (thus $F,E,B$ are smooth and compact). If the fundamental group $\pi_1(B)$ of the base space $B$ acts trivially on the cohomology $H^*(F; \mathbb Q)$ of the fiber space $F$, then the Hirzebruch $\chi_y$-genus is multiplicative, i.e., $\chi_y(E) = \chi_y(F)\chi_y(B)$.
\end{thm}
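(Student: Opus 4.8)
The plan is to deduce the polynomial identity $\chi_y(E)=\chi_y(F)\chi_y(B)$ from a single isomorphism of pure Hodge structures
\[
H^n(E;\bQ)\;\cong\;\bigoplus_{a+b=n} H^a(F;\bQ)\otimes H^b(B;\bQ),
\]
holding for each $n$, where the right-hand side carries the tensor-product Hodge structure. Granting this, multiplicativity is purely formal: writing $\chi_y(X)=\sum_{p,q}(-1)^q h^{p,q}(X)\,y^p$ in terms of the Hodge numbers $h^{p,q}=\dim_{\bC}H^{p,q}$, and using that the Hodge bidegree is additive under tensor products, so that $h^{p,q}(V_1\otimes V_2)=\sum_{p_1+p_2=p,\,q_1+q_2=q}h^{p_1,q_1}(V_1)\,h^{p_2,q_2}(V_2)$, the resulting quadruple sum factors as the product of the two genera. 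The cohomological grading is automatically tracked by the Hodge bidegree, since $n=p+q=(p_1+q_1)+(p_2+q_2)$, so no auxiliary signs intervene; the whole content of the theorem is thus the Hodge-theoretic splitting above.

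To produce this splitting I would run the Leray spectral sequence of the smooth projective morphism $\pi\colon E\to B$, whose second page is $E_2^{a,b}=H^a(B;R^b\pi_*\bQ)$. Two classical facts control it. First, by Deligne's degeneration theorem for smooth projective (more generally proper smooth K\"ahler) morphisms, this spectral sequence degenerates at $E_2$; moreover, in Saito's theory of mixed Hodge modules---and already in Deligne's Hodge theory---the Leray filtration on $H^n(E;\bQ)$ is a filtration by sub-Hodge-structures whose associated graded is $\bigoplus_{a+b=n}E_2^{a,b}$ as Hodge structures. Second, the sheaves $R^b\pi_*\bQ$ underlie polarizable variations of Hodge structure of weight $b$ on $B$, with fibre the weight-$b$ Hodge structure $H^b(F;\bQ)$.

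It remains to exploit the hypothesis on the monodromy. Since $\pi_1(B)$ acts trivially on $H^*(F;\bQ)$, each local system $R^b\pi_*\bQ$ is trivial; by the global theorem of the fixed part, a polarizable variation of Hodge structure over the smooth projective base $B$ whose entire underlying local system is monodromy-invariant is in fact a constant variation. Hence $R^b\pi_*\bQ\cong H^b(F;\bQ)\otimes\bQ_B$ as variations of Hodge structure, and therefore $E_2^{a,b}=H^a(B;\bQ)\otimes H^b(F;\bQ)$ with its tensor-product Hodge structure, which is pure of weight $a+b$ because $B$ is compact. Assembling the degenerate spectral sequence of Hodge structures yields the desired isomorphism, and the formal computation of the first paragraph finishes the proof.

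The step I expect to be the crux is the interplay between the Hodge-compatible degeneration and the theorem of the fixed part: triviality of the \emph{topological} monodromy only makes the local system $R^b\pi_*\bQ$ constant, whereas the $\chi_y$-genus sees the Hodge filtration, which could a priori still vary holomorphically within the family. It is precisely the theorem of the fixed part---a nontrivial consequence of the work of Schmid and Deligne on variations of Hodge structure---that rigidifies this filtration and forces the variation to be constant, and it is here that the smoothness and projectivity of $\pi$, together with the compactness of $B$, are indispensable.
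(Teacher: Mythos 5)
The paper itself gives no proof of Theorem \ref{CMS}: it is quoted from \cite{CMS}, and your argument is essentially the proof given there --- $E_2$-degeneration of the Leray spectral sequence of the smooth proper morphism $\pi$, compatibility of the Leray filtration with the Hodge structures, and the theorem of the fixed part converting triviality of the monodromy on $R^b\pi_*\mathbb{Q}$ into constancy of the corresponding variations of Hodge structure, after which multiplicativity of $\chi_y$ is exactly the formal Hodge-number computation of your first paragraph. The only point to adjust is that the hypothesis is compactness, not projectivity, of $F$, $E$, $B$ (a smooth complete algebraic variety need not be projective), so the classical projective statements you invoke (Deligne degeneration, the fixed-part theorem over a projective base) should be replaced by their versions for arbitrary proper morphisms of complex algebraic varieties, available via the decomposition theorem or Saito's mixed Hodge module theory, which is how \cite{CMS} proceeds.
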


The Hirzebruch $\chi_y$-genus $\chi_y(X) \in \bZ[y]$ of a compact complex algebraic manifold $X$ was introduced by F. Hirzebruch \cite{Hirzebruch} (also see \cite{HBJ}) in order to generalize his famous Hirzebruch-Riemann-Roch theorem. For the distinguished values $y=-1, 0, 1$ of the parameter $y$, the Hirzebruch $\chi_y$-genus specializes to: 
\begin{itemize}
\item $\chi_{-1}(X) =\chi(X)$ the (topological) \emph{Euler-Poincar\'e characteristic}, 
\item $\chi_0(X) = \tau(X)$ the \emph{Todd genus} (alias holomorphic Euler characteristic $\chi(X, \mathcal O_X)$) and
\item  $\chi_1(X)=\sigma(X)$ the \emph{signature}.
\end{itemize}
Thus the Hirzebruch $\chi_y$-genus unifies these three important characteristic numbers. 

In fact, \cite[Proposition 2.3]{CMS} deals with multiplicativity properties of the more general Hirzebruch $\chi_y$-genus (as recalled in \S3) of possibly singular complex algebraic varieties, which for a compact variety $X$ is the degree $\int_X {T_y}_*(X)$ of the motivic Hirzeburch class ${T_y}_*(X)\in H_*(X) \otimes \bQ[y]$ (as recalled in \S3) introduced in \cite{BSY1} (cf. also \cite{BSY2}, \cite{Sch}, \cite{YokuraMSRI}, \cite{BSY3}).  
Moreover, the following characteristic class generalization of Theorem \ref{CMS} is proved in \cite[Corollary 4.11]{CMS} (see also \cite{CLMS1,CLMS2}):
\begin{thm}[Cappell--Maxim--Shaneson]\label{CMS2}  Let $f:E \to B$  be a proper algebraic map of complex algebraic varieties, with $B$ smooth and connected, so that all direct image sheaves $R^jf_*\mathbb Q_E$
are locally constant (e.g., $f$ is a locally trivial topological fibration). Let $F$ be the general fiber of $f$, and assume that $\pi_1(B)$ acts trivially on the 
cohomology of $F$ (e.g. $\pi_1(B)=0$), i.e., all these $R^jf_* \mathbb Q_E$ 
are constant. Then
\begin{equation*}
f_*T_{y*}(E) = \chi_y(F)T_{y*}(B). 
\end{equation*}
\end{thm}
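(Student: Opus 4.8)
The plan is to prove the identity $f_*T_{y*}(E) = \chi_y(F)T_{y*}(B)$ by reducing it to a statement about the motivic Hirzebruch class transformation applied to a suitable class in the relative Grothendieck group of varieties (or, more precisely, in the Grothendieck group of algebraic mixed Hodge modules) over $B$. Recall from the theory of \cite{BSY1} that $T_{y*}$ arises as the composition of the Hodge-theoretic transformation $mC_*$ with a normalization, and that it is the degree-level image of a natural transformation defined on the relevant Grothendieck group. The key structural fact I would exploit is the compatibility of $T_{y*}$ with proper pushforward: there is a transformation $T_{y*}: K_0(\op{MHM}(-)) \to H_*(-) \otimes \bQ[y]$ commuting with $f_*$, so that $f_* T_{y*}[\mathbb{Q}_E^H] = T_{y*}(f_*[\mathbb{Q}_E^H]) = T_{y*}\left(\sum_j (-1)^j [R^jf_* \mathbb{Q}_E^H]\right)$, where $\mathbb{Q}_E^H$ denotes the constant Hodge module on $E$ and the right-hand side decomposes via the decomposition theorem for the proper map $f$.

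The crucial hypothesis is that all the direct image sheaves $R^jf_*\mathbb{Q}_E$ are \emph{constant} local systems on the smooth connected base $B$ (this is exactly what the triviality of the $\pi_1(B)$-action guarantees). First I would argue that, at the level of the derived category of mixed Hodge modules, the constancy of these cohomology sheaves forces the pushforward $f_*[\mathbb{Q}_E^H]$ to be expressible in terms of constant coefficient systems on $B$; concretely, each local system $R^jf_*\mathbb{Q}_E$ of rank $b_j = \dim H^j(F;\mathbb{Q})$ contributes a trivial summand, and the associated mixed Hodge module data is determined by the Hodge structure on the cohomology of the fiber $F$. Then I would apply the transformation $T_{y*}$ termwise. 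Since $B$ is smooth, the constant Hodge module pieces push the computation down to $T_{y*}(B)$ multiplied by the appropriate Hodge-numerical weight of each graded piece of $H^*(F)$; summing the alternating contributions $\sum_j (-1)^j$ over the Hodge filtration reassembles precisely the $\chi_y$-genus of the fiber, $\chi_y(F) = \sum_{p,q} (-1)^q \dim \op{Gr}_F^p H^{p+q}(F) \cdot (-y)^p$, by the very definition of $\chi_y$ as the degree of $T_{y*}(F)$.

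The main obstacle I anticipate is handling the Hodge-theoretic bookkeeping rigorously: a constant local system underlying a summand of $f_*\mathbb{Q}_E$ need not a priori carry a \emph{constant variation of Hodge structure}, and one must verify that the triviality of the monodromy action on $H^*(F;\mathbb{Q})$ is enough to trivialize the relevant mixed Hodge module structure over $B$ (i.e., that the admissible variation is a constant one, up to the intrinsic Hodge structure on the cohomology of the fiber). This is where the smoothness and connectedness of $B$, together with the rigidity of variations of Hodge structure with trivial underlying local system, should be invoked. Once this trivialization is secured, the multiplicativity $T_{y*}(\text{constant VHS with fiber } V) = \chi_y(V) \cdot T_{y*}(B)$ follows from the module structure of $T_{y*}$ over the coefficients and the functoriality of the transformation. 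I would therefore structure the argument as: (i) constancy of $R^jf_*\mathbb{Q}_E$ via the $\pi_1(B)$-hypothesis; (ii) upgrade to constancy of the Hodge-module structure; (iii) apply the functorial transformation $T_{y*}$ and compute the coefficient as $\chi_y(F)$.
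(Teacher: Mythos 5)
The paper itself gives no proof of this statement: it is quoted in the Introduction as a theorem of Cappell--Maxim--Shaneson, with the proof deferred to \cite[Corollary 4.11]{CMS} and to \cite{CLMS1, CLMS2}. Your outline reconstructs essentially the argument of those cited sources: factor $T_{y*}$ through $K_0(\op{MHM}(B))$, use properness to write $f_*T_{y*}(E)=MHT_{y*}(f_*[\mathbb{Q}_E^H])=\sum_j(-1)^j\, MHT_{y*}\bigl([H^j(f_*\mathbb{Q}_E^H)]\bigr)$, use the constancy hypothesis to identify each summand with a constant Hodge-theoretic coefficient system on $B$, and reassemble the coefficients into $\chi_y(F)$. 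Two refinements are needed to close the step you yourself flag as the main obstacle. First, rigidity (the theorem of the fixed part) is available for \emph{pure} polarizable variations; since the fibers of $f$ may be singular, the modules $H^j(f_*\mathbb{Q}_E^H)$ are genuinely mixed, so one should first pass to the weight-graded pieces --- legitimate because a class in $K_0(\op{MHM}(B))$ equals the sum of the classes of its $Gr^W$-pieces, and sub- and quotient local systems of a constant local system on a connected base are again constant --- and then apply rigidity to each pure piece (alternatively, invoke the fixed-part theorem for admissible variations of mixed Hodge structure directly). Second, you should not appeal to the decomposition theorem: $\mathbb{Q}_E^H$ is not pure when $E$ is singular, so that theorem does not apply as stated, and it is in any case unnecessary, since the identity $f_*[\mathbb{Q}_E^H]=\sum_j(-1)^j[H^j(f_*\mathbb{Q}_E^H)]$ holds in the Grothendieck group for formal reasons (alternating sums of cohomology objects). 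Finally, one needs the base-change compatibility identifying the stalk mixed Hodge structure of $H^j(f_*\mathbb{Q}_E^H)$ at $b\in B$ with Deligne's mixed Hodge structure on $H^j(F)$ (and $H^j_c(F)=H^j(F)$ since $F$ is compact), so that the alternating sum of the coefficients is indeed $\chi_y(F)$; also note the small sign slip in your formula for $\chi_y(F)$, which should read $\sum_{p,q}(-1)^{p+q}\dim Gr_F^p H^{p+q}(F)\,(-y)^p$. With these amendments your plan is precisely the proof given in the references the paper cites.
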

More general ``stratified multiplicative properties'' for an arbitrary proper algebraic morphism $f:E\to B$, describing the difference $f_*T_{y*}(E) - \chi_y(F)T_{y*}(B)$ in terms of corresponding invariants of strata of $f$ are obtained in \cite{CMS}, \cite{CLMS1}, \cite{CLMS2}, \cite{MS1}, etc.

On the other hand, in \cite{BSY3} J.-P. Brasselet, J. Sch\"urmann and S. Yokura give some explicit description of the motivic Hirzebruch classes in terms of other known homology classes, and in \cite{Yo2} Yokura  expresses the difference $\chi_y(E) - \chi_y(F)\chi_y(B)$ for a smooth complex algebraic fiber bundle $F \hookrightarrow E \to B$ in terms of the Euler-Poincar\'e characteristic, Todd genus and signature. As a byproduct of such explicit computations, S. Yokura derives the congruence $\sigma(E)\equiv \sigma(F)\sigma(B) \, \, \op{mod} 4$, thus reproving the above mentioned result of Hambleton--Korzeniewski--Ranicki \cite{HKR} in the complex algebraic context. 
Yokura also shows that \emph{the Euler-Poincar\'e characteristic is the only multiplicative specialization of the $\chi_y$-genus for such fiber bundles}.
Furthermore, motivated by the proof of the above congruence formula, in \cite[Theorem 3.1 and Theorem 4.1]{RY} C. Rovi and S. Yokura prove the following extension to the Hirzebruch $\chi_y$-genus of the congruence formula for the signature modulo $4$ and that for the signature modulo $8$ due to  C. Rovi \cite{Rov, Rov2}:
\begin{thm}[Congruence formulae for Hirzebruch $\chi_y$-genera $\op{mod} \, \, 4$ and $8$] \label{mod8} For a complex algebraic fiber bundle $F \hookrightarrow E \to B$ (thus $F,E,B$ are smooth and compact), one has the following congruence formulae:
\begin{enumerate}
\item For any odd integer $y$, $\chi_y(E) \equiv \chi_y(F)\chi_y(B) \, \op{mod} \, \, 4$.
\item If $y \equiv 3 \, \op{mod} \, 4$, then $\chi_y(E) \equiv \chi_y(F)\chi_y(B) \, \op{mod} \, 8.$
\item If $y \equiv 1 \, \op{mod} \, 4$, then $\chi_y(E) \equiv \chi_y(F)\chi_y(B) \, \op{mod} \, 8 \Longleftrightarrow \, \sigma(E) \equiv \sigma(F)\sigma(B) \, \op{mod} \, 8.$ 
\end{enumerate}
\end{thm}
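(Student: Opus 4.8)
The plan is to exploit the fact that, for smooth compact $F,E,B$, the difference
\[
D(y) \ :=\ \chi_y(E)-\chi_y(F)\,\chi_y(B)
\]
is a genuine polynomial in $\bZ[y]$, and to control its divisibility by powers of $2$ at odd arguments by factoring out the root contributed by the Euler--Poincar\'e characteristic. First I would record that $\chi_{-1}=\chi$ is multiplicative for \emph{every} topological fiber bundle, so that $D(-1)=\chi(E)-\chi(F)\chi(B)=0$. Since $y+1$ is monic, division in $\bZ[y]$ then yields a factorization
\[
D(y)=(y+1)\,G(y),\qquad G(y)\in\bZ[y].
\]
Evaluating at $y=1$ gives $D(1)=\sigma(E)-\sigma(F)\sigma(B)=2\,G(1)$, so $G(1)=\tfrac12\bigl(\sigma(E)-\sigma(F)\sigma(B)\bigr)$, and the parity of $G(1)$ is governed exactly by the signature congruence modulo $4$.

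The second ingredient is an elementary congruence for integer polynomials at odd arguments. Since $y^{k}\equiv 1\ \op{mod}\,2$ for every odd $y$ and every $k\ge 0$, one has $G(y)\equiv G(1)\ \op{mod}\,2$ for all odd $y$; and since $y^{k}\equiv 1\ \op{mod}\,4$ whenever $y\equiv 1\ \op{mod}\,4$, one gets the sharper $G(y)\equiv G(1)\ \op{mod}\,4$ for all $y\equiv 1\ \op{mod}\,4$. Writing $v_2$ for the $2$-adic valuation, the factorization gives $v_2(D(y))=v_2(y+1)+v_2(G(y))$, which I would feed into the three cases $v_2(y+1)\ge 1$ for odd $y$, $v_2(y+1)\ge 2$ for $y\equiv 3\ \op{mod}\,4$, and $v_2(y+1)=1$ for $y\equiv 1\ \op{mod}\,4$.

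With these in hand I would import the known signature congruence $\sigma(E)\equiv\sigma(F)\sigma(B)\ \op{mod}\,4$ (Hambleton--Korzeniewski--Ranicki \cite{HKR}, in Yokura's complex-algebraic reproof \cite{Yo2}), which says exactly that $G(1)$ is even, hence that $G(y)$ is even for \emph{every} odd $y$. For part (1): for odd $y$, $v_2(D(y))=v_2(y+1)+v_2(G(y))\ge 1+1=2$, so $4\mid D(y)$. For part (2): for $y\equiv 3\ \op{mod}\,4$ one has $v_2(y+1)\ge 2$ while still $v_2(G(y))\ge 1$, whence $v_2(D(y))\ge 3$ and $8\mid D(y)$. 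For part (3): for $y\equiv 1\ \op{mod}\,4$ one has $v_2(y+1)=1$, so $8\mid D(y)\iff v_2(G(y))\ge 2\iff 4\mid G(y)$; using $G(y)\equiv G(1)\ \op{mod}\,4$ this is equivalent to $4\mid G(1)$, i.e.\ to $8\mid D(1)=\sigma(E)-\sigma(F)\sigma(B)$, which is the signature congruence modulo $8$.

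I expect the only genuine obstacle to be the input $G(1)\in 2\bZ$, i.e.\ the modulo-$4$ signature congruence $\sigma(E)\equiv\sigma(F)\sigma(B)\ \op{mod}\,4$; everything else is $2$-adic bookkeeping on the polynomial $G=D/(y+1)$. Parts (1) and (2) rest on this single fact, whereas part (3) is purely formal — an equivalence of two mod-$8$ statements that requires no signature input at all, which is consistent with the mod-$8$ signature congruence failing in general. A self-contained treatment would instead deduce the evenness of $G(1)$ from Yokura's explicit expression of $D(y)$ in terms of $\chi$, $\tau$ and $\sigma$ \cite{Yo2}; producing that expression (or establishing the HKR congruence directly) is the step carrying the real content.
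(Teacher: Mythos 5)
Your proof is correct and is essentially the paper's (i.e.\ Rovi--Yokura's) own argument in a repackaged form: the factorization $D(y)=(y+1)G(y)$ forced by $D(-1)=0$, together with $G(1)=\tfrac12\bigl(\sigma(E)-\sigma(F)\sigma(B)\bigr)$, is precisely the degree-level congruence $\chi_y(E)-\chi_y(F)\chi_y(B)\equiv \frac{\sigma^H(E)-\sigma^H(F)\sigma^H(B)}{2}(1+y) \,\op{mod}\, (1-y^2)$ of Corollary \ref{cor}(1), and your $2$-adic bookkeeping at odd $y$ (cases $v_2(y+1)\geq 1$, $\geq 2$, $=1$) is the same case analysis the paper performs. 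Your single imported ingredient, $\sigma(E)\equiv\sigma(F)\sigma(B)\,\op{mod}\,4$ from \cite{HKR}/\cite{Yo2}, is exactly the input the paper also treats as external in the smooth case (it derives a duality-based analogue only in the intersection-homology setting of \S 6), so your identification of this as the step carrying the real content matches the paper's structure.
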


In this paper, we consider congruence identities for the motivic Hirzebruch classes $T_{y*}(X)$ and their intersection homology counterpart.
We prove the following result:
\begin{thm}\label{thm0} Let $f:X \to Z$ and $g:Y \to Z$ be morphisms of complex algebraic varieties with the same target $Z$, and assume that $X$, $Y$ and $Z$ are compact. Then we have the following congruences for the difference $f_*T_{y*}(X) - g_*T_{y*}(Y)$:
\begin{align*}
(1) \,& f_*T_{y*}(X) - g_*T_{y*}(Y) \equiv \frac{f_*c_*(X)\otimes \mathbb Q -g_*c_*(Y)\otimes \mathbb Q}{2}(1-y)+ \\
& \hspace{4cm} \frac{f_*L_*^H(X) -g_*L_*^H(Y)}{2}(1+y) \,\op{mod} \Bigl (H_{*}(Z) \otimes \mathbb Q[y] \Bigr)(1-y^2).
\end{align*}
\begin{align*}
(2)\,  & f_*T_{y*}(X) - g_*T_{y*}(Y) \equiv \frac{f_*c_*(X)\otimes \mathbb Q -g_*c_*(Y)\otimes \mathbb Q}{2}(y^2 -y) 
\\
& \hspace{4cm} +\Bigl( f_*td_*^H(X)-g_*td_*^H(Y) \Bigr)(1-y^2) 
+ \frac{f_*L_*^H(X) -g_*L_*^H(Y)}{2}(y+y^2) \\
& \hspace{9cm} \op{mod} \Bigl (H_{*}(Z) \otimes \mathbb Q[y] \Bigr)(y-y^3).
\end{align*}
Here, $c_*(X)\otimes \bQ=T_{-1*}(X)$ is the rationalized Chern--Schwartz--MacPherson class, $td^H_*(X):=T_{0*}(X)$ and $L^H_*(X):=T_{1*}(X)$.

In particular, by taking the degree in the above formulae, we have:
$$ (3) \, \,  \chi_y(X) - \chi_y(Y) \equiv  \frac{\chi(X) -\chi(Y)}{2}(1-y) + \frac{\sigma^H(X) -\sigma^H(Y)}{2}(1+y)  \, \, \op{mod} \, 1-y^2, \qquad \qquad $$
\begin{align*}
(4) \, \, \chi_y(X) - \chi_y(Y) \equiv & \frac{\chi(X) -\chi(Y)}{2}(y^2 -y) + \left (\tau^H(X)-\tau^H(Y) \right )(1-y^2)  \qquad \qquad \qquad \qquad \qquad  \\
&   \hspace{4cm}  +  \frac{\sigma^H(X) -\sigma^H(Y)}{2}(y+y^2)  \,\, \, \op{mod} \, y-y^3 ,
\end{align*}
where for a (possible singular) complex algebraic variety $X$ we let $\tau^H(X)=\chi_0(X)$ and $\sigma^H(X)=\chi_1(X)$.
\end{thm}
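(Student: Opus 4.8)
The plan is to reduce the whole statement to a single elementary fact about polynomial interpolation, exploiting that the motivic Hirzebruch class $T_{y*}$ is polynomial in $y$ and that its three distinguished specializations are exactly the classes appearing on the right-hand sides. Write $A := H_*(Z)\otimes\mathbb{Q}$ and set
$$P(y) := f_*T_{y*}(X) - g_*T_{y*}(Y) \in A[y].$$
Since $X,Y,Z$ are compact, the maps $f,g$ are proper, so $f_*T_{y*}(X)$ and $g_*T_{y*}(Y)$ are well-defined in $A[y]$ by the functoriality of $T_{y*}$ under proper pushforward; in particular $P(y)$ is an honest polynomial in $y$ with coefficients in $A$.

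The key lemma I would record is: if $y_1,\dots,y_k\in\mathbb{Q}$ are distinct and $A$ is any commutative $\mathbb{Q}$-algebra, then every $P(y)\in A[y]$ is congruent modulo $\prod_{i}(y-y_i)$ to its Lagrange interpolant $\sum_i P(y_i)\prod_{j\ne i}\frac{y-y_j}{y_i-y_j}$. The proof is the monic factor theorem: each $y-y_i$ is monic, so $P(y)-P(y_i)$ is divisible by $y-y_i$; since the nodes are rational scalars the differences $y_i-y_j$ are units in $A$, the factors $y-y_i$ are pairwise coprime, and divisibility by each yields divisibility by their product. This is the only point that needs care, because $A=H_*(Z)\otimes\mathbb{Q}$ may carry zero divisors or nilpotents; the rationality of the interpolation nodes is exactly what rescues the divisibility argument.

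With this lemma in hand the theorem is immediate. For (1) I would take nodes $\{-1,1\}$, so that $\prod(y-y_i)=y^2-1$ generates the same ideal as $(1-y^2)$, and use
$$P(-1)=f_*(c_*(X)\otimes\mathbb{Q})-g_*(c_*(Y)\otimes\mathbb{Q}),\qquad P(1)=f_*L^H_*(X)-g_*L^H_*(Y),$$
where the first equality is the normalization $T_{-1*}=c_*\otimes\mathbb{Q}$ and the second is the definition $L^H_*:=T_{1*}$. The interpolant is $\tfrac12\bigl(P(-1)(1-y)+P(1)(1+y)\bigr)$, which is precisely the right-hand side of (1). For (2) I would instead take nodes $\{-1,0,1\}$, so $\prod(y-y_i)=y^3-y$ generates $(y-y^3)$, adjoin the value $P(0)=f_*td^H_*(X)-g_*td^H_*(Y)$ coming from $td^H_*:=T_{0*}$, and read off the quadratic interpolant $P(-1)\tfrac{y^2-y}{2}+P(0)(1-y^2)+P(1)\tfrac{y^2+y}{2}$, which is exactly (2).

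Finally, (3) and (4) follow by applying the degree (pushforward-to-a-point) map $\int_Z\colon A\to\mathbb{Q}$. Properness gives $\int_Z f_*T_{y*}(X)=\int_X T_{y*}(X)=\chi_y(X)$, and likewise $\int_Z f_*(c_*(X)\otimes\mathbb{Q})=\chi(X)$, $\int_Z f_*td^H_*(X)=\tau^H(X)$, $\int_Z f_*L^H_*(X)=\sigma^H(X)$, with the analogous identities for $g$ and $Y$. Since $\int_Z$ is $\mathbb{Q}[y]$-linear it carries the ideal $(1-y^2)\subset A[y]$ into $(1-y^2)\subset\mathbb{Q}[y]$, and similarly $(y-y^3)$ into $(y-y^3)$, so applying it to (1) and (2) yields the scalar congruences (3) and (4) verbatim. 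I do not anticipate any genuine obstacle beyond the bookkeeping in the interpolation lemma: once the polynomial nature of $T_{y*}$ and its specialization values are granted, the entire statement is a formal consequence of Lagrange interpolation modulo a product of distinct rational linear factors.
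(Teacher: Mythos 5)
Your proposal is correct and establishes exactly the stated congruences, but it gets there by a genuinely different mechanism than the paper. The paper's proof (Lemmas \ref{lem1} and \ref{lem2}, assembled into Corollary \ref{corollary}) works with the coefficient decomposition $T_{y*}(X)=\sum_i T^i_*(X)y^i$: it substitutes $y^2=1$ (resp.\ $y^3=y$) to collapse the class onto its even and odd parts, expresses those parts via the identities (\ref{formula1}), namely $2T^{\op{even}}_*=L^H_*+c_*\otimes\mathbb{Q}$ and $2T^{\op{odd}}_*=L^H_*-c_*\otimes\mathbb{Q}$ (together with $T^0_*=td^H_*$), and then pushes the single-variety congruences forward and subtracts. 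You instead prove a general Lagrange interpolation lemma at distinct rational nodes and apply it directly to the difference $P(y)$, so that only the three specializations $T_{-1*}=c_*\otimes\mathbb{Q}$, $T_{0*}=td^H_*$, $T_{1*}=L^H_*$ enter; this is more modular, treats (1) and (2) uniformly, and generalizes at no extra cost to congruences modulo $\prod_i(y-y_i)$ for any finite set of distinct rational nodes, whereas the paper's hands-on computation yields as by-products the identities (\ref{formula1}) and (\ref{formula1b}) that it reuses later (e.g., to prove $\sigma^H(E)\equiv\sigma^H(F)\sigma^H(B)\ \op{mod}\ 2$ in \S 5). Two points to tighten in your write-up: first, $H_{*}(Z)\otimes\mathbb{Q}$ is only a $\mathbb{Q}$-vector space, not a commutative $\mathbb{Q}$-algebra (Borel--Moore homology carries no natural product), so the interpolation lemma should be stated for $\mathbb{Q}$-modules --- your factor-theorem argument works verbatim there, with divisibility read as membership in the submodule $(y-y_i)A[y]$ and with invertibility of the nonzero rational scalars $y_i-y_j$ doing the work; second, your derivation of (3)--(4) gives congruences modulo $(1-y^2)\mathbb{Q}[y]$, resp.\ $(y-y^3)\mathbb{Q}[y]$, while the intended reading (see the remark following Theorem \ref{thm0}) is modulo the corresponding $\mathbb{Z}[y]$-ideals; the upgrade is automatic because both sides lie in $\mathbb{Z}[y]$ (by the evenness identity (\ref{formula1b})) and $1-y^2$, $y-y^3$ are primitive, so Gauss's lemma applies --- a point the paper's own ``by taking the degree'' passes over just as silently.
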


\begin{rem} Note that formulae (1) and (2) of Theorem \ref{thm0} also hold for non-compact spaces, provided that $f$ and $g$ are proper morphisms.
 Here, $ \op{mod}\Bigl(H_*(Z)\otimes \bQ[y] \Bigr)(1-y^2)$ in (1) means specializing $y^2=1$ in
 $$\begin{CD} 
    H_*(Z)\otimes \bQ[y]/(1-y^2) @> y^2=1 > \simeq > H_*(Z)\otimes (\bQ\oplus \bQ\cdot y),
   \end{CD}$$
   and $ \op{mod} \Bigl(H_*(X)\otimes \bQ[y]\Bigr)(y-y^3)$ in (2) means specializing $y^3=y$ in
 $$\begin{CD} 
    H_*(Z)\otimes \bQ[y]/(y-y^3) @> y^3=y > \simeq > H_*(Z)\otimes (\bQ\oplus \bQ\cdot y \oplus \bQ\cdot y^2)\:.
   \end{CD}$$
The $\chi_y$-genus in (3) and (4) already lives in $\bZ[y]\subset \bQ[y]$, with $\bZ[y]/(1-y^2)\simeq \bZ\oplus \bZ\cdot y$ and
$\bZ[y]/(y-y^3)\simeq \bZ\oplus \bZ\cdot y \oplus \bZ\cdot y^2$. 
\end{rem}


In fact, it suffices to prove Theorem \ref{thm0} just for one morphism (i.e., with either $X$ or $Y$ empty). But the formulation with two morphisms allows us to compare a given morphism $f: E\to B$ with the second factor projection $g=pr_2: F\times B\to B$
of a product with fiber $F$. Then the Chern class, resp., Euler characteristic contributions in Theorem \ref{thm0} cancel out if we assume that all 
(compact) fibers of $f: E\to B$ have the same Euler characteristic as $F$, i.e., in terms of (algebraically) constructible functions: $f_*(\jeden_E)=\chi(F)\cdot \jeden_B$.
This assumption is much weaker than the topological assumption that $f: E\to B$ is a ``complex algebraic fiber bundle'' with fiber $F$,
or the cohomological assumption that all direct image sheaves $R^jf_*\bQ_E$ are locally constant, with $F$ isomorphic to a fiber of $f$ over every connected component of $B$.
\begin{cor}\label{cor} Let $f:E \to B$ be a proper morphism, and assume that all the fibers of $f$ have the same Euler characteristic $\chi(F)$ as the compact algebraic variety $F$.
Then we have 
\begin{align*}
(1) \, \,  f_*T_{y*}(E) -& \chi_y(F)T_{y*}(B) \\
& \equiv \frac{f_*L_*^H(E) -\sigma^H(F)L_*^H(B)}{2} (1+y)  \,\, \op{mod} \Bigl (H_{*}(B) \otimes \mathbb Q[y] \Bigr)(1-y^2).
\end{align*}
If $B$ (and therefore also $E$) is compact, then by taking the degree we have:
$$\chi_y(E) -\chi_y(F)\chi_y(B) \equiv \frac{\sigma^H(E)-\sigma^H(F)\sigma^H(B)}{2}(1+y) \,\, \op{mod} \, \, 1-y^2.$$
\begin{align*}
(2) \, \, f_*T_{y*}(E) -\chi_y(F)T_{y*}(B) & \equiv \Bigl (f_*td_*^H(E) -\tau^H(F)td_*^H(B) \Bigr)(1-y^2) \\
& \hspace{1cm}+ \frac{f_*L_*^H(E)-\sigma^H(F)L_*^H(B)}{2}(y+ y^2) \\
& \hspace{4cm}\op{mod} \Bigl (H_{*}(B) \otimes \mathbb Q[y] \Bigr)(y-y^3)
\end{align*}
If $B$ (and therefore also $E$) is compact, then by taking the degree we have:
\begin{align*}
\chi_y(E) -\chi_y(F)\chi_y(B) & \equiv \Bigl (\tau^H(E) -\tau^H(F)\tau^H(B) \Bigr)(1-y^2)\\
& \hspace{2cm} + \frac{\sigma^H(E)-\sigma^H(F)\sigma^H(B)}{2}(y^2+y) \,\, \op{mod} \, \, y-y^3.
\end{align*}
\end{cor}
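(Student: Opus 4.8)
The plan is to deduce the Corollary directly from Theorem~\ref{thm0}, applied to the pair of proper morphisms $f\colon E \to B$ and $g := pr_2\colon F \times B \to B$; thus in the notation of the theorem I set $Z = B$, $X = E$ and $Y = F \times B$. Up to routine bookkeeping, the whole argument then rests on two points: (i) computing the $g$-pushforwards of the classes attached to the product $F \times B$, and (ii) showing that the Chern--Schwartz--MacPherson (equivalently, Euler characteristic) contributions cancel.

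For (i), I would combine the multiplicativity of the motivic Hirzebruch class under external products with the behavior of homology pushforward along a projection having compact fiber $F$: for $\alpha \in H_*(F)$ and $\beta \in H_*(B)$ one has $pr_{2*}(\alpha \times \beta) = \left(\int_F \alpha \right)\beta$, since $pr_2$ factors through the collapse of the $F$-factor to a point. Applying this to $T_{y*}(F \times B) = T_{y*}(F) \times T_{y*}(B)$ and using $\int_F T_{y*}(F) = \chi_y(F)$ gives $g_* T_{y*}(F\times B) = \chi_y(F)\,T_{y*}(B)$. Specializing $y$ to $-1,0,1$ then yields $g_*(c_*(F\times B)\otimes \bQ) = \chi(F)\,c_*(B)\otimes\bQ$, $g_*\,td_*^H(F\times B) = \tau^H(F)\,td_*^H(B)$ and $g_*\,L_*^H(F\times B) = \sigma^H(F)\,L_*^H(B)$.

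The decisive step is (ii), where the Euler characteristic hypothesis enters. Since $c_* = T_{-1*}$ is the rationalized MacPherson Chern class transformation, which is functorial for proper pushforward, I have $f_*c_*(\jeden_E) = c_*(f_*\jeden_E)$, where $f_*\jeden_E$ is the constructible function $b \mapsto \chi(f^{-1}(b))$. The assumption that every fiber of $f$ has Euler characteristic $\chi(F)$ is precisely the identity $f_*\jeden_E = \chi(F)\cdot\jeden_B$, so $f_*(c_*(E)\otimes\bQ) = \chi(F)\,c_*(B)\otimes\bQ$, which agrees with the value of $g_*(c_*(F\times B)\otimes\bQ)$ found in (i). Hence the Chern differences occurring in both (1) and (2) of Theorem~\ref{thm0} vanish. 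Substituting this cancellation and the remaining values from (i) into formulae (1) and (2) of Theorem~\ref{thm0} produces the two class-level congruences of the Corollary.

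The degree-level statements follow by applying $\int_B$ to the class-level congruences, which is legitimate since $B$, and therefore $E$ by properness of $f$, is compact. Here I use $\int_B f_*T_{y*}(E) = \int_E T_{y*}(E) = \chi_y(E)$, $\int_B \chi_y(F)T_{y*}(B) = \chi_y(F)\chi_y(B)$, and the analogous degree identities $\int_B f_*L_*^H(E) = \sigma^H(E)$ and $\int_B f_*\,td_*^H(E) = \tau^H(E)$ for the specialized classes. The only genuine obstacle is securing the identification in (ii); once the Euler characteristic hypothesis is translated into the constructible-function identity $f_*\jeden_E = \chi(F)\jeden_B$ and fed through the naturality of $c_*$, the rest is formal.
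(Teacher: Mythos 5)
Your proposal is correct and follows essentially the same route as the paper: the authors also apply the two-morphism congruence (Theorem \ref{thm0}/Corollary \ref{corollary}) to the cospan $E \xrightarrow{f} B \xleftarrow{pr_2} F\times B$, compute $(pr_2)_*T_{y*}(F\times B)=\chi_y(F)T_{y*}(B)$ via the cross-product formula, and cancel the Chern terms using the naturality of $c_*$ together with $f_*\jeden_E=\chi(F)\cdot\jeden_B$. The only cosmetic difference is that the paper factors $pr_2$ as $a_F\times \op{id}_B$ where you invoke the equivalent identity $pr_{2*}(\alpha\times\beta)=\bigl(\int_F\alpha\bigr)\beta$; this is not a substantive divergence.
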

\begin{rem} The degree part (i.e., the integral polynomial part) of Corollary \ref{cor} (1) plays an essential role in the proof of Theorem \ref{mod8}. 
\end{rem}

Intersection homology flavoured analogues of the above results are  discussed in \S 6. For instance, we prove the following intersection (co)homology counterpart of Theorem \ref{mod8}: 
\begin{thm}\label{ihmod8}
 Let $F\hookrightarrow E \to B$ be a complex algebraic fiber bundle of pure-dimensional complex projective algebraic varieties. Then
 \begin{enumerate}
  \item For any odd integer $y$, $I\chi_y(E)\equiv I\chi_y(F)I\chi_y(B) \op{mod} 4$.
  \item If $y \equiv\; 3 \;mod \;4$, then $I\chi_y(E)\equiv I\chi_y(F)I\chi_y(B) \op{mod} 8$.
  \item If $y \equiv\; 1 \;mod \;4$, then $I\chi_y(E)\equiv I\chi_y(F)I\chi_y(B) \op{mod} 8 \Leftrightarrow 
  \sigma(E) \equiv \sigma(F)\sigma(B) \op{mod} 8 $.
 \end{enumerate}
\end{thm}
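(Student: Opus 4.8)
The plan is to transplant the argument of Rovi and Yokura for Theorem~\ref{mod8} to the singular projective setting, replacing the motivic Hirzebruch class $T_{y*}$ by its intersection-cohomology counterpart $IT_{y*}$, the Hirzebruch class of the intersection Hodge module $IC_X^H$, whose degree is the intersection Hirzebruch genus $I\chi_y$. First I would record the three distinguished specializations: $I\chi_{-1}(X)=I\chi(X)$ is the intersection-cohomology Euler characteristic, $I\chi_0(X)=I\tau(X)$ an intersection arithmetic genus, and $I\chi_1(X)=I\sigma(X)$ the signature of the symmetric intersection form on the middle-degree group $IH^{\dim_\bC X}(X)$; for pure-dimensional projective $X$ the Hodge index theorem identifies the latter with the Goresky--MacPherson signature $\sigma(X)$ occurring in part~(3).

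The backbone of the proof is the degree part of the intersection-cohomology analogue of Corollary~\ref{cor}(2), which I would establish in \S6 exactly as Theorem~\ref{thm0} (via Lagrange interpolation of the integer-valued polynomial $I\chi_y$ at $\{-1,0,1\}$ with values $I\chi,I\tau,I\sigma$), namely
\[
I\chi_y(E)-I\chi_y(F)\,I\chi_y(B)\;\equiv\;I\tau_\Delta\,(1-y^2)+\tfrac12\,I\sigma_\Delta\,(y^2+y)\ \ \op{mod}\ (y-y^3),
\]
with $I\tau_\Delta=I\tau(E)-I\tau(F)I\tau(B)$ and $I\sigma_\Delta=I\sigma(E)-I\sigma(F)I\sigma(B)$. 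For the interpolation coefficient $\tfrac12\bigl(I\chi(E)-I\chi(F)I\chi(B)\bigr)$ of $(y^2-y)$ to vanish I must first prove that $I\chi$ is multiplicative for the bundle, $I\chi(E)=I\chi(F)I\chi(B)$. Since $f\colon E\to B$ is a proper, locally trivial algebraic fibration, the decomposition theorem splits $Rf_*IC_E$ into a sum of shifted intersection complexes $IC_B(L_j)$ with local-system coefficients and no summands supported on proper subvarieties, whence
\[
I\chi(E)=\sum_j(-1)^j\chi\bigl(B;IC_B(L_j)\bigr)=\Bigl(\sum_j(-1)^j\operatorname{rk}L_j\Bigr)I\chi(B)=I\chi(F)\,I\chi(B),
\]
using $\chi\bigl(B;IC_B(L)\bigr)=\operatorname{rk}(L)\cdot I\chi(B)$ and $\sum_j(-1)^j\operatorname{rk}L_j=I\chi(F)$.

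Next I would secure the integrality that powers the congruences. Poincar\'e duality on $IH^*(X)$ gives $I\chi(X)\equiv b^I_{\dim_\bC X}(X)\ \op{mod}\ 2$, and since a nondegenerate symmetric form has signature congruent to its rank mod $2$ also $I\sigma(X)\equiv b^I_{\dim_\bC X}(X)\ \op{mod}\ 2$; combined with the multiplicativity just proved this yields $I\sigma_\Delta\equiv 0\ \op{mod}\ 2$, so $\tfrac12 I\sigma_\Delta\in\bZ$ and the displayed identity holds with an integral quotient $R(y)\in\bZ[y]$. Evaluating at an odd integer $y_0$ and using $1-y_0^2\equiv y_0-y_0^3\equiv 0\ \op{mod}\ 8$ collapses it to
\[
I\chi_{y_0}(E)-I\chi_{y_0}(F)\,I\chi_{y_0}(B)\;\equiv\;\tfrac12\,I\sigma_\Delta\,(y_0^2+y_0)\ \ \op{mod}\ 8,
\]
and a case analysis on $y_0\ \op{mod}\ 4$ reads off the three parts exactly as in \cite{RY}: for $y_0\equiv3$ the factor $y_0^2+y_0\equiv0\ \op{mod}\ 4$ delivers part~(1) from parity alone, part~(3) is the clean equivalence with $I\sigma_\Delta\equiv0\ \op{mod}\ 8$ (hence with $\sigma(E)\equiv\sigma(F)\sigma(B)\ \op{mod}\ 8$ via $I\sigma=\sigma$), while the remaining assertions hinge on $I\sigma_\Delta$ modulo $4$.

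The main obstacle is precisely this last point: parts~(1) (for $y_0\equiv1$) and~(2) require the stratified analogue of the Hambleton--Korzeniewski--Ranicki congruence, $I\sigma(E)\equiv I\sigma(F)I\sigma(B)\ \op{mod}\ 4$. Parity is not enough here, so I would instead analyze the monodromy of the polarized variation of Hodge structure carried by the middle intersection cohomologies $IH^{\mathrm{mid}}(F)$ of the fibers: the deviation $I\sigma_\Delta$ is computed by a Meyer-type signature cocycle evaluated on this pairing and monodromy, and one must prove it is divisible by $4$ (and control it mod $8$ by $\sigma_\Delta$), exactly as Rovi does in the manifold case. Equivalently, one may package $IC_E$ through the symmetric $L$-theory / Witt-bordism formalism for stratified spaces and invoke the corresponding algebraic $L$-theory computations. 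Making either route precise in the projective algebraic category --- in particular verifying that the fiberwise intersection pairing assembles into the requisite self-dual Hodge-theoretic local system to which Meyer's cocycle applies --- is where the genuine work lies.
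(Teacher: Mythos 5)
Your reduction is sound as far as it goes, and it matches the paper's skeleton: the degree congruence from the intersection-homology analogue of Corollary \ref{cor} (the paper's Theorem \ref{thm6}), the multiplicativity $I\chi(E)=I\chi(F)I\chi(B)$ (which the paper quotes from \cite[Proposition 3.6]{CMS0} rather than re-deriving via the decomposition theorem, but your sketch is acceptable), the identification $I\chi_1(X)=\sigma(X)$ via (\ref{ichisig}), and the case analysis on $y \op{mod} 4$ taken from \cite{RY}. The genuine gap is exactly the step you yourself flag as ``where the genuine work lies'': the congruence $\sigma(E)\equiv\sigma(F)\sigma(B) \op{mod} 4$, which (as you correctly note) is what parts (1) (for $y\equiv 1 \op{mod} 4$) and (2) hinge on. You propose to obtain it from a Meyer-type signature cocycle for the monodromy of the polarized variation of Hodge structure on the fibers' middle intersection cohomology, or from stratified $L$-theory/Witt bordism, and you leave both routes unexecuted. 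None of that machinery is needed, and the paper uses none of it: the missing idea is to invoke the duality theorem already established in \S 6 (Theorem \ref{ihduality}, $I\chi^p(X)=(-1)^nI\chi^{n-p}(X)$, a consequence of Saito's theory and \cite[Corollary 5.19]{Sch}). Your argument only uses Poincar\'e duality for intersection Betti numbers plus ``signature $\equiv$ rank $\op{mod} 2$'', which yields the parity statement but can never see the factor of $4$.

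Concretely, set $D(y):=I\chi_y(E)-I\chi_y(F)I\chi_y(B)=\sum_p d_p y^p$ and $n:=\dim_{\bC}E=\dim_{\bC}F+\dim_{\bC}B$. Applying Theorem \ref{ihduality} to $E$, $F$ and $B$ gives the functional equation $D(y)=(-y)^nD(1/y)$, i.e., $d_p=(-1)^nd_{n-p}$ for all $p$. If $n$ is odd, then $D(1)=-D(1)=0$. If $n$ is even, then $d_p=d_{n-p}$, and multiplicativity of $I\chi$ gives
\begin{equation*}
0=D(-1)=2\sum_{p<n/2}(-1)^pd_p+(-1)^{n/2}d_{n/2},
\end{equation*}
so that $d_{n/2}=-2(-1)^{n/2}\sum_{p<n/2}(-1)^pd_p$; substituting into $D(1)=2\sum_{p<n/2}d_p+d_{n/2}$ yields
\begin{equation*}
D(1)=2\sum_{p<n/2}\Bigl(1-(-1)^{n/2+p}\Bigr)d_p\equiv 0 \op{mod} 4,
\end{equation*}
since each coefficient $1-(-1)^{n/2+p}$ is $0$ or $2$. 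As $D(1)=\sigma(E)-\sigma(F)\sigma(B)$ by (\ref{ichisig}), this is precisely the mod $4$ congruence you are missing; it is steps (a) and (b) of the paper's proof, which is thereby ``formally exactly the same'' as the Rovi--Yokura proof of Theorem \ref{mod8}. With this in hand, your evaluation argument closes all three parts; as written, your proposal only establishes part (1) for $y\equiv 3\op{mod}4$ and part (3).
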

Here, $\sigma(-)$ denotes the Goresky--MacPherson intersection (co)homology signature, and $I\chi_y(-)$ is the intersection (co)homology counterpart of the $\chi_y$-genus (defined by using the Hodge structure on the intersection cohomology groups of complex projective varieties). In particular, Theorem \ref{ihmod8} extends Theorem \ref{mod8} from smooth algebraic fiber bundles to the case of algebraic fiber bundles of pure-dimensional complex projective algebraic varieties, which are only rational homology manifolds.

\medskip

The paper is organized as follows. In \S2 we recall the definition of the Hirzebruch $\chi_y$-genus and of cohomology Hirzebruch classes in the smooth context. Extensions of these notions to the singular context are discusses in \S3, where a short overview of the theory of  motivic Hirzebruch classes is given. Theorem \ref{thm0} is proved in \S4, whereas applications to complex algebraic fiber bundles (Corollary \ref{cor}) are discussed in \S5. Finally, intersection Hirzebruch classes are introduced in \S 6, and congruence identities for such classes are discussed, e.g., see Theorem \ref{thm6}. Theorem \ref{ihmod8} is also proved in this final section.

\section{Hirzebruch $\chi_y$-genera and Hirzebruch classes $T_y$}

\begin{defn}[Hirzebruch $\chi_y$-genus]
For a compact complex algebraic manifold $X$ the Hirzebruch \emph{$\chi_{y}$-genus} $\chi_y(X)$ is  defined by 
$$\chi_{y}(X):= 
\sum_{p\geq 0} \chi(X,\Lambda^{p}T^{*}X)y^p
= \sum_{p\geq 0} \Biggl( \sum_{i\geq 0}
(-1)^{i}\op{dim}_{\mathbb C}H^{i}(X,\Lambda^{p}T^{*}X) \Biggr)y^p\:.
$$
\end{defn}
\begin{rem}
Let $$\chi^p(X):= \chi(X,\Lambda^{p}T^{*}X)$$ be the Euler characteristic of the sheaf $\Lambda^{p}T^{*}X$. Then the Hirzebruch $\chi_y$-genus is the generating function for $\chi^p(X)$, i.e., 
$$\chi_{y}(X)= \sum_{p\geq 0} \chi^p(X)y^p.$$
\end{rem}

The Hirzebruch $\chi_{y}$-genus has the following important properties:
\begin{enumerate}
\item  Since $\Lambda^{p}T^{*}X =0$ for $p> \op{dim}_{\mathbb C}X$, $\chi_{y}(X)$ is a polynomial of degree at most $\op{dim}_{\mathbb C} X$. 
\item For the three distinguished values $-1, 0, 1$ of $y$, we have the following:
\begin{enumerate}
\item $\chi_{-1}(X)=\chi(X)$ 
is the Euler--Poincar\'e characteristic. 
\item $ \chi_0(X) = \chi^0(X)=\tau (X)$ is the Todd genus. 
\item $\chi_1(X)=\sigma(X)$ 
is the signature. For a projective complex algebraic (or compact K\"ahler) manifold, this identity follows from the Hodge index theorem as in \cite[Theorem 15.8.2, p.125]{Hirzebruch}. For a compact complex (algebraic) manifold it is a consequence of the Atiyah-Singer index theorem \cite{AS1, AS2, AS3} as explained, e.g., in \cite[Appendix I, Chapter 25, p.190]{Hirzebruch}.
\end{enumerate}
\item $\chi_y$ is multiplicative, i.e., $\chi_y(X \times Y) = \chi_y(X) \chi_y(Y)$. 
\end{enumerate}

The following \emph{duality formula}, which follows from Serre duality \cite[a special case of (14) on p.123]{Hirzebruch} (and also see \cite{Kotschick1, Kotschick2, KS}), plays a key role in proving the congruence identities of Theorem \ref{mod8} for the signature modulo $4$ and $8$ (see also \S 6 for the corresponding intersection (co)homology results):
\begin{thm}\label{duality} For a compact complex algebraic manifold $X$ of pure complex dimension $n$ we have
$$\chi^p(X) = (-1)^n \chi^{n-p}(X).$$
\end{thm}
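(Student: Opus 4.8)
The plan is to deduce the identity directly from Serre duality on the compact complex manifold $X$. Recall that for any holomorphic vector bundle $E$ on a compact complex manifold $X$ of pure complex dimension $n$, Serre duality furnishes natural isomorphisms
$$H^i(X, E) \cong H^{n-i}(X, E^\vee \otimes K_X)^*,$$
where $K_X = \Lambda^n T^*X$ is the canonical bundle and $E^\vee$ denotes the dual bundle. I would apply this with the choice $E = \Lambda^p T^*X$, so that the left-hand side is exactly the cohomology group appearing in the definition of $\chi^p(X)$.

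The key step is then to identify the dual coefficient sheaf $E^\vee \otimes K_X = (\Lambda^p T^*X)^\vee \otimes \Lambda^n T^*X$. Fiberwise, for an $n$-dimensional complex vector space $V$, the contraction pairing $\Lambda^p V^* \otimes \Lambda^{n-p} V^* \to \Lambda^n V^*$ induces a canonical isomorphism $(\Lambda^p V^*)^\vee \otimes \Lambda^n V^* \cong \Lambda^{n-p} V^*$. Taking $V = T_xX$ and invoking naturality, this globalizes to a bundle isomorphism $(\Lambda^p T^*X)^\vee \otimes K_X \cong \Lambda^{n-p}T^*X$. Substituting into Serre duality yields
$$H^i(X, \Lambda^p T^*X) \cong H^{n-i}(X, \Lambda^{n-p}T^*X)^*,$$
and hence the Hodge-number symmetry $\dim_{\mathbb C} H^i(X, \Lambda^p T^*X) = \dim_{\mathbb C} H^{n-i}(X, \Lambda^{n-p}T^*X)$.

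Finally, I would insert this equality of dimensions into the defining alternating sum and reindex. Writing
$$\chi^p(X) = \sum_{i \geq 0} (-1)^i \dim_{\mathbb C} H^i(X, \Lambda^p T^*X) = \sum_{i \geq 0} (-1)^i \dim_{\mathbb C} H^{n-i}(X, \Lambda^{n-p}T^*X),$$
the substitution $j = n - i$ (so that $(-1)^i = (-1)^{n-j} = (-1)^n (-1)^j$) transforms the right-hand side into $(-1)^n \sum_{j} (-1)^j \dim_{\mathbb C} H^j(X, \Lambda^{n-p}T^*X) = (-1)^n \chi^{n-p}(X)$, which is precisely the claimed duality formula.

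The only genuinely substantive ingredient is Serre duality itself; everything else is the pointwise linear-algebra identification of the dual bundle together with a harmless reindexing. The main point to be careful about is therefore the isomorphism $(\Lambda^p T^*X)^\vee \otimes K_X \cong \Lambda^{n-p}T^*X$, which is where the shift $p \mapsto n-p$ enters, and the bookkeeping of the sign $(-1)^n$ produced by the degree shift $i \mapsto n-i$ in cohomological degree.
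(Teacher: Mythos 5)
Your proof is correct and follows exactly the route the paper takes: the paper derives this identity from Serre duality (citing Hirzebruch's book), and your argument simply fills in the standard details --- the identification $(\Lambda^p T^*X)^\vee \otimes K_X \cong \Lambda^{n-p}T^*X$ via the wedge pairing and the reindexing $j = n-i$ that produces the sign $(-1)^n$. Note also that your argument correctly uses only Serre duality (valid on any compact complex manifold), not the K\"ahler-only Hodge symmetry, so it covers the full generality of the statement.
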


More generally, one can make the following
\begin{defn}[Hirzebruch $\chi_y$-genus of a vector bundle] For $E$ a holomorphic vector bundle over $X$, the Hirzebruch $\chi_y$-genus of $E$ is defined by
$$\chi_{y}(X,E):= 
\sum_{p\geq 0} \chi(X,E\otimes \Lambda^{p}T^{*}X) y^{p}\\
=\sum_{p\geq 0} \Biggl( \sum_{i\geq 0}
(-1)^{i}\op{dim}_{\mathbb C}H^{i}(X,E\otimes \Lambda^{p}T^{*}X) \Biggr) y^{p} \:.
$$
\end{defn}
\begin{thm}[The generalized Hirzebruch--Riemann--Roch theorem]
\begin{equation} \label{eq:gHRR} 
\chi_{y}(X,E)= \int_{X} T_{y}(TX)\cdot ch_{(1+y)}(E) \cap [X]
\quad \in \bb{Q}[y], \tag{gHRR}
\end{equation}
with $$ch_{(1+y)}(E):= \sum_{j=1}^{rank\;E} e^{\beta_{j}(1+y)}
\quad \text{and} \quad T_{y}(TX):= \prod_{i=1}^{\op{dim} X} \Biggl (\frac{\alpha_i(1+y)}{1-e^{-\alpha_i(1+y)}} -\alpha_i y \Biggr),$$
where $\beta_{j}$ are the Chern roots of $E$, and $\alpha_{i}$ are the Chern
roots of the tangent bundle $TX$.
\end{thm}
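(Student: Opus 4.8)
The plan is to deduce the generalized formula from the classical Hirzebruch--Riemann--Roch theorem (HRR) applied to each summand, followed by Hirzebruch's renormalization of the associated multiplicative power series. First, I would apply HRR to each of the coherent sheaves $E\otimes \Lambda^p T^*X$ entering the definition of $\chi_y(X,E)$, obtaining
$$\chi(X,E\otimes \Lambda^p T^*X)=\int_X ch(E\otimes \Lambda^p T^*X)\cdot td(TX)\cap [X].$$
By the multiplicativity of the Chern character, $ch(E\otimes \Lambda^p T^*X)=ch(E)\cdot ch(\Lambda^p T^*X)$, so after multiplying by $y^p$ and summing over $p$ the problem reduces to evaluating the generating series $\sum_{p\ge 0}ch(\Lambda^p T^*X)\,y^p$. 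Since $T^*X$ has Chern roots $-\alpha_i$, the exterior power $\Lambda^p T^*X$ has Chern character equal to the $p$-th elementary symmetric function in the $e^{-\alpha_i}$; hence $\sum_{p\ge 0}ch(\Lambda^p T^*X)\,y^p=\prod_{i=1}^{\op{dim}X}(1+y\,e^{-\alpha_i})$ (a finite sum, since $\Lambda^p T^*X=0$ for $p>\op{dim}X$, which also explains the polynomial nature of the answer). Combined with $td(TX)=\prod_i \alpha_i/(1-e^{-\alpha_i})$ and $ch(E)=\sum_j e^{\beta_j}$, this produces the \emph{unnormalized} formula
$$\chi_y(X,E)=\int_X \Bigl(\sum_j e^{\beta_j}\Bigr)\cdot \prod_{i=1}^{\op{dim}X}\frac{\alpha_i(1+y\,e^{-\alpha_i})}{1-e^{-\alpha_i}}\cap [X].$$

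Second, I would pass from this unnormalized expression to the normalized one in the statement by Hirzebruch's renormalization trick. Writing $Q_y(\alpha)=\alpha(1+y\,e^{-\alpha})/(1-e^{-\alpha})$ for the one-variable series above, a direct computation shows that the normalized series satisfies the identity
$$\frac{\alpha(1+y)}{1-e^{-\alpha(1+y)}}-\alpha y=\frac{1}{1+y}\,Q_y\bigl(\alpha(1+y)\bigr),$$
so that $T_y(TX)=(1+y)^{-n}\prod_i Q_y(\alpha_i(1+y))$, with $n=\op{dim}X$. Likewise $ch_{(1+y)}(E)=\sum_j e^{\beta_j(1+y)}$ is obtained from $ch(E)$ by the substitution $\beta_j\mapsto \beta_j(1+y)$. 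Therefore $ch_{(1+y)}(E)\cdot T_y(TX)$ is exactly $(1+y)^{-n}$ times the image of the unnormalized integrand under the scaling substitution $\alpha_i\mapsto \alpha_i(1+y)$, $\beta_j\mapsto \beta_j(1+y)$, the formal parameter $y$ inside $Q_y$ being left untouched.

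The key step — and the main point requiring care — is the degree bookkeeping under this substitution. Since $X$ has complex dimension $n$, only the degree $2n$ component of the integrand contributes to $\int_X(-)\cap[X]$; and a cohomology class of degree $2k$ is multiplied by $(1+y)^k$ under the substitution $\alpha_i\mapsto\alpha_i(1+y)$, $\beta_j\mapsto\beta_j(1+y)$. Hence the degree $2n$ part picks up exactly the factor $(1+y)^n$, which cancels the normalizing factor $(1+y)^{-n}$. This shows that the normalized integrand and the unnormalized integrand have the same degree $2n$ component, so their integrals over $X$ agree, yielding the claimed identity (gHRR). I expect this renormalization, together with the accompanying degree argument, to be the only delicate part; the termwise application of HRR and the computation of the exterior-power generating series are routine.
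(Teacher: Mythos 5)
Your proposal is correct, but there is nothing in the paper to compare it against: the paper states (gHRR) as a classical theorem and gives no proof, implicitly citing Hirzebruch's book \cite{Hirzebruch} (cf.\ also \cite{HBJ}); the surrounding text only records its specializations. What you have written is a complete and accurate reconstruction of the standard argument: termwise application of (HRR) to $E\otimes\Lambda^pT^*X$, the splitting-principle computation $\sum_p ch(\Lambda^pT^*X)\,y^p=\prod_i(1+y\,e^{-\alpha_i})$ giving the unnormalized formula, and then Hirzebruch's renormalization. Your key identity
$$\frac{\alpha(1+y)}{1-e^{-\alpha(1+y)}}-\alpha y=\frac{1}{1+y}\cdot\frac{\alpha(1+y)\bigl(1+y\,e^{-\alpha(1+y)}\bigr)}{1-e^{-\alpha(1+y)}}$$
checks out (write $1+y\,e^{-x}=(1+y)-y(1-e^{-x})$ with $x=\alpha(1+y)$), and the degree bookkeeping — that the substitution $\alpha_i\mapsto\alpha_i(1+y)$, $\beta_j\mapsto\beta_j(1+y)$ scales the degree-$2n$ component by exactly $(1+y)^n$, cancelling the normalization factor — is precisely the point that makes the normalized and unnormalized classes have the same degree, and you state it correctly. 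One cosmetic remark: the factor $(1+y)^{-n}$ lives in $\bQ[y,(1+y)^{-1}]$ rather than $\bQ[y]$, so strictly you should either carry out the comparison there and observe that both ends of the chain of equalities are polynomials, or avoid the inversion altogether by writing $\prod_iQ_y(\alpha_i(1+y))=(1+y)^nT_y(TX)$ and cancelling $(1+y)^n$ in the integral domain $\bQ[y]$ at the end; this is a formality, not a gap.
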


\begin{defn} $T_{y}(TX)$ is called the \emph{cohomology Hirzebruch class} of $X$.
\end{defn}

The special case of (\ref{eq:gHRR}) when $y=0$ is the famous \emph{Hirzebruch--Riemann--Roch theorem}:
\begin{thm}
\begin{equation} \label{eq:HRR} 
\chi(X,E)= \int_{X} td(TX)\cdot ch(E) \cap [X]. \tag{HRR}
\end{equation}
Here $\displaystyle ch(E)= \sum_{j=1}^{rank\;E} e^{\beta_{j}}$ is the Chern character and
$\displaystyle td(TX)= \prod_{i=1}^{\op{dim} X} \Bigl (\frac{\alpha_i}{1-e^{-\alpha_i}}\Bigr)$ is the Todd class.
\end{thm}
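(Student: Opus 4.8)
The plan is to deduce the formula directly from the generalized Hirzebruch--Riemann--Roch theorem (gHRR) stated just above, by specializing the parameter $y$ to $0$. Since (gHRR) is already available to me as an input, the only thing to do is to substitute $y=0$ into all three of its ingredients and check that each collapses to its classical counterpart, after which the asserted identity (HRR) is exactly the $y=0$ evaluation of (gHRR).

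First, for the left-hand side, in the defining sum $\chi_y(X,E)=\sum_{p\geq 0}\chi(X,E\otimes\Lambda^pT^*X)\,y^p$ setting $y=0$ annihilates every term with $p\geq 1$ and leaves only the $p=0$ summand, namely $\chi(X,E\otimes\Lambda^0T^*X)=\chi(X,E)$, since $\Lambda^0T^*X=\mathcal O_X$. Next, the Chern-character factor $ch_{(1+y)}(E)=\sum_{j} e^{\beta_j(1+y)}$ becomes $\sum_j e^{\beta_j}=ch(E)$ at $y=0$. Finally, each factor of the cohomology Hirzebruch class, $\frac{\alpha_i(1+y)}{1-e^{-\alpha_i(1+y)}}-\alpha_i y$, reduces at $y=0$ to $\frac{\alpha_i}{1-e^{-\alpha_i}}$, so that $T_0(TX)=\prod_i\frac{\alpha_i}{1-e^{-\alpha_i}}=td(TX)$ is precisely the Todd class of $TX$. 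Assembling these three reductions, the $y=0$ specialization of (gHRR) reads $\chi(X,E)=\int_X td(TX)\cdot ch(E)\cap[X]$, which is (HRR).

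In this framing there is no genuine obstacle beyond the bookkeeping of the specialization, since (gHRR) is the substantive input carrying all the content. If one instead wished to prove (HRR) from scratch, the main work would lie in the index-theoretic identification: one realizes $\chi(X,E)$ as the index of the Dolbeault complex $(\Omega^{0,\bullet}(E),\bar\partial)$ twisted by $E$, and then applies the Atiyah--Singer index theorem (or, following Hirzebruch, a cobordism-and-multiplicative-sequence argument) to equate this index with the integral of $td(TX)$ against $ch(E)$. The hard part there is precisely the identification of the analytic index with the topological one; but all of that is already subsumed in (gHRR), so for the purpose of the present statement the $y=0$ specialization suffices.
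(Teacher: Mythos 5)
Your proposal is correct and matches the paper exactly: the paper presents (HRR) precisely as the $y=0$ specialization of (gHRR), and your verification that $\chi_0(X,E)=\chi(X,E)$, $ch_{(1)}(E)=ch(E)$, and $T_0(TX)=td(TX)$ is the same bookkeeping the paper records (see Remark \ref{spe} for the specialization of the normalized power series). Nothing further is needed.
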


\begin{rem}\label{spe} We note that the normalized
power series 
$\displaystyle \frac{\alpha(1+y)}{1-e^{-\alpha(1+y)}} -\alpha y$
 specializes to
\begin{enumerate}
\item $(y=-1)$: $1+\alpha$ 
\item $(y=\, \, \, \, 0)$: $\frac{\alpha}{1-e^{-\alpha}}$ 
\item $(y= \, \, \, \, 1)$: $\frac{\alpha}{\tanh \alpha}$ 
\end{enumerate}
Therefore the cohomology Hirzebruch class $T_{y}(TX)$ unifies the
following three distinguished and important cohomology characteristic classes of $TX$:
\begin{enumerate}
\item $(y=-1)$: $c(TX) = \prod_{i=1}^{\op{dim} X}  (1+\alpha)$ the total Chern class 
\item $(y=\, \, \, \, 0)$: $td(TX) = \prod_{i=1}^{\op{dim} X} \frac{\alpha}{1-e^{-\alpha}}$ the total Todd class 
\item $(y= \, \, \, \, 1)$: $ L(TX) = \prod_{i=1}^{\op{dim} X}  \frac{\alpha}{\tanh \alpha}$ the total Thom--Hirzebruch $L$-class. 
\end{enumerate}
\end{rem}
\section{Hirzebruch $\chi_y$-genus and motivic Hirzebruch class $T_{y*}$ for singular varieties}

The Hirzebruch $\chi_y$-genus can be extended to the case of singular varieties, by using Deligne's mixed Hodge structures.

\begin{defn}
The \emph{Hodge--Deligne polynomial} of a complex algebraic variety $X$ is defined by:
$$\chi _{u,v}(X) := \sum_{i, p, q \geq 0} (-1)^i (-1)^{p+q}\op{dim}_{\bC} (Gr^p_F Gr^W_{p+q} H^i_c(X, \bC)) u^p v^q,$$
where $(W^{\bullet},F_{\bullet})$ denote the weight and resp. Hodge filtration on $H^*_c(X,\bQ)$.
\end{defn}
The Hodge--Deligne polynomial $\chi_{u,v}$ satisfies the following properties:
\begin{enumerate}
\item
$ X\cong X'$ (isomorphism) $\Longrightarrow$ $\chi _{u,v}(X) = \chi _{u,v}(X')$,

\item
$\chi _{u,v}(X) = \chi _{u,v}(X\setminus Y) + \chi _{u,v}(Y)$ for a closed subvariety $Y \subset X$

\item
$\chi _{u,v}(X \times Y) = \chi _{u,v}(X)\cdot \chi _{u,v}(Y)$,

\item
$\chi _{u,v}(pt) = 1.$ 
\end{enumerate} 
The isomorphism class of a variety $X$ shall be denoted by $[X]$, and the free abelian group generated by the isomorphism classes of varieties shall be denoted by $\op{Iso}(\mathcal V)$. Then the above property (1) implies that the following homomorphism
$$\chi_{u,v}:\op{Iso}(\mathcal V) \to \mathbb Z[u,v], \quad \chi_{u,v}([X]):=\chi_{u,v}(X)$$
is well-defined. The Grothendieck group of varieties, $K_0(\mathcal V)$, is the quotient of $\op{Iso}(\mathcal V)$ by the subgroup generated by elements of the form $[X] - [Y] - [X \setminus Y]$ with $Y$ a closed subvariety of $X$, i.e., 
$$K_0(\mathcal V):= \op{Iso}(\mathcal V)/\{[X] - [Y] - [X \setminus Y]\}.$$
An element of $K_0(\mathcal V)$ is simply denoted by $[X]$, for the equivalence class of $[X]$. Then the above property (2) implies that the homomorphism $\chi_{u,v}:\op{Iso}(\mathcal V) \to \mathbb Z[u,v]$ induces the following finer homomorphism
$$\chi_{u,v}:K_0(\mathcal V) \to \mathbb Z[u,v].$$
In \cite{Looijenga}, E. Looijenga defines the relative Grothendieck group $K_0(\mathcal V/X)$ analogously, as the free abelian group generated by the isomorphism classes $[V \xrightarrow h X]$ of  morphisms $h:V \to X$, modulo the relation 
 $$[V \xrightarrow h X] = [W \xrightarrow {h|_W} X] + [V \setminus W \xrightarrow {h|_{V \setminus W}} X]$$
  for a closed subvariety $W \subset V$.
Note that $K_0(\mathcal V/pt) = K_0(\mathcal V)$, and for a morphism $f:X \to Y$ the pushforward $f_*:K_0(\mathcal V/X) \to K_0(\mathcal V/Y)$ is defined by $f_*([V \xrightarrow h X]):=[V \xrightarrow {f \circ h} Y]$. 
So $K_0(\mathcal V/-)$ is a covariant functor. 

A natural question concerns the existence of a \emph{Grothendieck--Riemann--Roch type theorem for $\chi_{u,v}:K_0(\mathcal V) \to \mathbb Z[u,v]$}, i.e., the existence of a natural transformation
$$\mu: K_0(\mathcal V/-) \to H_*(-)\otimes \mathbb Z[u,v]$$
(with $H_*(-)$ denoting the even-degree Borel-Moore homology) 
such that for the map $a_X: X \to pt$ to a point the following diagram commutes:
$$\CD
K_0(\mathcal V/X) @> \mu >> H_*(X)\otimes \mathbb Z[u,v] \\
@V (a_X)_* VV @VV (a_X)_* V\\
K_0(\mathcal V) @>> \chi_{u,v} > \mathbb Z[u,v].\endCD
$$
It is also natural to require that such a transformation $\mu$ satisfies the following ``smooth condition" (or normalization): there exists a multiplicative cohomology characteristic  class $c\ell$ such that if $X$ is smooth then
$$\mu([X \xrightarrow {\op{id}_X} X]= c\ell(TX) \cap [X].$$
As shown in \cite[Example 5.1]{BSY1}, this ``smooth condition" implies that $(u+1)(v+1)=0$, i.e., $u=-1$ or $v=-1$, therefore we can address the above question only for $\chi_{u, -1}(X)$ or $\chi_{-1,v}(X)$. Since $\chi_{u,v}(X)$ is in fact symmetric with respect to the variables $(u,v)$, 
we can restrict our attention (after changing $u$ to $y$) to 
\begin{align*}
\chi_y(X) :=\chi_{y,-1}(X) & = \sum_{i, p \geq 0} (-1)^i \op{dim}_{\mathbb C} Gr^p_{F} \left( H^i_c (X, \mathbb C) \right) (-y)^p\\
& = \sum_{p \geq 0} \left (\sum_{i \geq 0} (-1)^{i+p} \op{dim}_{\mathbb C} 
Gr^p_{F} \left ( H^i_c (X, \mathbb C) \right )\right)  y^p.
\end{align*}
When $X$ is non-singular and compact, the purity of cohomology implies that $\chi_y(X)$ coincides with the Hirzebruch $\chi_y$-genus of $X$, which explains why it is also referred to as the Hirzebruch $\chi_y$-genus (though other terminology, such as Hodge polynomial or Hirzebruch polynomial, is also used). 
Thus, for a possibly singular variety $X$, the coefficient $\chi^p(X)$ of the above Hirzebruch $\chi_y$-genus  $\chi_y(X)$ is
$$\chi^p(X) = \sum_{i \geq 0} (-1)^{i+p} \op{dim}_{\mathbb C} 
Gr^p_{F} \left ( H^i_c(X, \mathbb C) \right ).$$
Here we remark that the degree of the above integral polynomial $\chi_y(X)$ of a possibly singular variety $X$ is at most the dimension of $X$, just like in the smooth case (cf. \cite[Corollary 3.1(1)]{BSY1}).
If we now consider the commutative diagram:
$$\CD
K_0(\mathcal V/X) @> \mu >> H_*(X)\otimes \mathbb Z[y] \\
@V (a_X)_* VV @VV (a_X)_* V\\
K_0(\mathcal V) @>> \chi_y > \mathbb Z[y] \endCD
$$
 for $[\mathbb P^n \xrightarrow {\op{id}_{\mathbb P^n}} \mathbb P^n]$, we have
$$\chi_y(\mathbb P^n) = \int_{\mathbb P^n} c\ell(T\mathbb P^n) \cap [\mathbb P^n].$$
Since $\chi_y(\mathbb P^n)=1 -y+y^2+ \cdots + (-1)^ny^n$, we must have
$$\int_{\mathbb P^n} c\ell(T\mathbb P^n) \cap [\mathbb P^n] = 1 -y+y^2+ \cdots + (-1)^ny^n.$$
In \cite{Hirzebruch}, Hirzebruch proved that such a characteristic class $c\ell$ has to be $T_y$, i.e., the cohomology Hirzebruch class. 

In \cite{BSY1} (see also \cite {BSY2}, \cite{SY}, \cite{Sch} and \cite{YokuraMSRI}), Saito's theory of mixed Hodge modules \cite{Saito} is used to give a positive answer to the above question on the existence of a Grothendieck--Riemann--Roch type theorem for $\chi_y:K_0(\mathcal V) \to \mathbb Z[y]$, namely:
\begin{thm}[Brasselet--Sch\"urmann--Yokura] There exists a unique natural transformation 
$${T_y}_*: K_0(\m V/X) \to H_*(X)\otimes \bQ[y],$$
such that
\begin{enumerate}
\item For a nonsingular variety $X$, ${T_y}_*([X \xrightarrow {\op{id}_X} X]) = T_y(TX) \cap [X]$, where $T_y(TX)$ is the cohomology Hirzebruch class of $X$.
\item For a point $X =pt$, ${T_y}_*: K_0(\m V/pt) =K_0(\mathcal V) \to H_*(pt)\otimes \bQ[y] =\bQ[y]$
is equal to the homomorphism $\chi_y:K_0(\mathcal V) \to \mathbb Z[y]$ followed by the inclusion $\mathbb Z[y] \hookrightarrow \bQ[y]$.
\end{enumerate}
The natural transformation ${T_y}_*: K_0(\m V/X) \to H_*(X)\otimes \bQ[y]$ is called the motivic Hirzebruch class transformation.
\end{thm}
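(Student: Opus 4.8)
The plan is to prove existence and uniqueness separately. Uniqueness is essentially formal, resting on resolution of singularities and the scissor relations, while existence requires the full strength of Saito's theory of mixed Hodge modules, which is where the real work lies.

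For \emph{uniqueness}, I would first reduce to a manageable set of generators. Using resolution of singularities, compactification (e.g.\ via closure of the graph of the structure map), and Noetherian induction on dimension together with the additivity relation $[V] = [W] + [V \setminus W]$, every class in $K_0(\mathcal{V}/X)$ can be written as a $\mathbb{Z}$-linear combination of classes $[M \xrightarrow{h} X]$ in which $M$ is smooth and the morphism $h$ is proper. For such a generator one has $[M \xrightarrow{h} X] = h_*[M \xrightarrow{\mathrm{id}} M]$, so naturality of $T_{y*}$ with respect to proper pushforward forces
$$T_{y*}([M \xrightarrow{h} X]) = h_*\bigl( T_{y*}([M \xrightarrow{\mathrm{id}} M]) \bigr) = h_*\bigl( T_y(TM) \cap [M] \bigr),$$
where the last equality is the normalization axiom (1). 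Since the value of $T_{y*}$ is thereby pinned down on every generator, uniqueness follows; axiom (2) is then automatic, being the special case $X = pt$.

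For \emph{existence}, I would construct $T_{y*}$ as a composition of three transformations threaded through Saito's category $\mathrm{MHM}(X)$ of mixed Hodge modules. First there is a group homomorphism $mH: K_0(\mathcal{V}/X) \to K_0(\mathrm{MHM}(X))$ sending $[V \xrightarrow{h} X] \mapsto [h_! \, \mathbb{Q}^H_V]$, with $\mathbb{Q}^H_V$ the constant Hodge module and $h_!$ the compact-support pushforward. The crucial point is that $mH$ respects the scissor relation: for $W \subset V$ closed with open complement $U$, the attaching triangle in $D^b\mathrm{MHM}(X)$ (under $K_0(D^b\mathrm{MHM}(X)) \cong K_0(\mathrm{MHM}(X))$) gives $[h_! \mathbb{Q}^H_V] = [(h|_U)_! \mathbb{Q}^H_U] + [(h|_W)_! \mathbb{Q}^H_W]$. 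Second, to each mixed Hodge module one associates its underlying filtered $\mathcal{D}$-module, forms Saito's filtered de Rham complex, and takes the classes of the coherent sheaves $\mathrm{gr}^F_{-p}\mathrm{DR}(M)$, defining
$$MHC_y: K_0(\mathrm{MHM}(X)) \to K_0^{\mathrm{alg}}(X) \otimes \mathbb{Z}[y, y^{-1}], \qquad [M] \mapsto \sum_{p} \bigl[ \mathrm{gr}^F_{-p}\,\mathrm{DR}(M) \bigr]\,(-y)^p .$$
Third, one applies the Baum--Fulton--MacPherson Todd transformation $td_*: K_0^{\mathrm{alg}}(X) \to H_*(X)\otimes \mathbb{Q}$, suitably renormalized by $(1+y)$ in each homological degree, to obtain a twisted version $td_{(1+y)}$, and sets $T_{y*} := td_{(1+y)} \circ MHC_y \circ mH$.

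It then remains to verify the two normalization axioms, which is a computation. For $X = pt$ the construction collapses to the Hodge--Deligne specialization $\chi_y$, by the very definition of $\chi^p$ in terms of $\mathrm{Gr}^p_F H^*_c$. For $X$ smooth, the constant Hodge module is pure and its filtered de Rham complex is the holomorphic de Rham complex with its stupid filtration, whose graded pieces are the bundles $\Omega^p_X$; thus $MHC_y$ produces $\sum_p [\Omega^p_X]\,y^p$, and applying $td_{(1+y)}$ yields $T_y(TX) \cap [X]$ exactly as in the generalized Hirzebruch--Riemann--Roch theorem (\ref{eq:gHRR}) with trivial coefficient bundle. The \textbf{main obstacle} is the well-definedness of the middle step $MHC_y$ together with its functoriality under proper pushforward: one must know that for proper $f$ the direct image of a mixed Hodge module is again a complex of mixed Hodge modules and, decisively, that the filtered $\mathcal{D}$-module pushforward is \emph{strict}, so that forming $\mathrm{gr}^F \mathrm{DR}$ commutes with $f_*$. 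This strictness is exactly Saito's deep input; granting it, the naturality of $T_{y*}$ follows by combining Saito's theory with the classical functoriality of the Baum--Fulton--MacPherson Todd transformation for proper maps.
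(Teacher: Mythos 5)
The paper offers no proof of this theorem: it is imported verbatim from \cite{BSY1}, so there is no internal argument to compare yours against. That said, your proposal is a faithful reconstruction of the original Brasselet--Sch\"urmann--Yokura proof (see also \cite{Sch}): uniqueness via generation of $K_0(\mathcal V/X)$ by classes $[M \xrightarrow{h} X]$ with $M$ smooth and $h$ proper (resolution of singularities plus the additivity relation), and existence as the composite of $\chi_{\rm Hdg}$ (your $mH$), the motivic Chern class transformation $MHC_y$ built from Saito's filtered de Rham complex, and the $(1+y)$-twisted Baum--Fulton--MacPherson transformation, with strictness of the filtered direct image under proper morphisms as the decisive technical input. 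The one point you pass over is that the twisted Todd transformation a priori takes values in $H_*(X)\otimes \mathbb Q[y,y^{-1},(1+y)^{-1}]$, so it must still be argued that the composite lands in $H_*(X)\otimes \mathbb Q[y]$, as the statement requires; this follows from your own uniqueness argument: evaluating on the generators $[M \xrightarrow{h} X]$, functoriality and the smooth normalization give $h_*\bigl(T_y(TM)\cap [M]\bigr)$, which visibly has coefficients in $\mathbb Q[y]$, and additivity then propagates this to all of $K_0(\mathcal V/X)$. With that remark supplied, your sketch matches the cited argument step for step.
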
 
\begin{defn} For any (possibly singular) variety $X$, $$T_{y*}(X):=T_{y*}([X \xrightarrow {\op{id}_X} X])$$
is called the \emph{motivic Hirzebruch class} of $X$. 
\end{defn}

\begin{rem} As shown in \cite{BSY1},  the motivic Hirzebruch class transformation ${T_y}_*$ is functorial for proper morphisms. In particular, if $X$ is compact, then the degree of the motivic Hirzebruch class $T_{y*}(X)$ is exactly the Hirzebruch $\chi_y$-genus of $X$, i.e.,
\begin{equation}\label{deg} \int_X T_{y*}(X):=(a_X)_* T_{y*}(X)= \chi_y(X).\end{equation}
\end{rem}

Note that by specializing the natural transformation ${T_y}_*: K_0(\m V/X) \to H_*(X)\otimes \bQ[y]$ for the three distinguished values of $y=-1, 0,1$, we get by Remark \ref{spe} the following:
\begin{enumerate}
\item ($y=-1$): There exists a unique natural transformation 
$${T_{-1}}_*: K_0(\m V/X) \to H_*(X)\otimes \bQ,$$
such that for a nonsingular variety $X$, ${T_{-1}}_*([X \xrightarrow {\op{id}_X} X]) = c(TX) \cap [X].$
\item ($y= \, \, \, \, \, 0$): There exists a unique natural transformation 
$${T_{0}}_*: K_0(\m V/X) \to H_*(X)\otimes \bQ,$$
such that for a nonsingular variety $X$, ${T_{0}}_*([X \xrightarrow {\op{id}_X} X]) = td(TX) \cap [X].$
\item ($y=\, \, \, \, \,  1$): There exists a unique natural transformation 
$${T_{1}}_*: K_0(\m V/X) \to H_*(X)\otimes \bQ,$$
such that for a nonsingular variety $X$, ${T_{1}}_*([X \xrightarrow {\op{id}_X} X]) = L(TX) \cap [X].$
\end{enumerate}


Classically, similar natural transformations satisfying the same normalization conditions have been defined as follows:
\begin{enumerate}
\item MacPherson's Chern class \cite{MacPherson}: \emph{There exists a unique natural transformation 
$$c_*: F(X) \to H_*(X),$$ 
such that for a nonsingular variety $X$, $c_*(\jeden_X) = c(TX) \cap [X]$.} Here $F$ is the covariant functor assigning to $X$ the abelian group $F(X)$ of constructible functions on $X$. For a (possibly singular) variety $X$,   $$c_*(X):= c_*(\jeden_X)$$  is called the Chern--Schwartz--MacPherson class of $X$\footnote{Terminology is motivated by the fact that J.-P. Brasselet and M.-H. Schwartz \cite{BS} (see also \cite{AB})
showed that, for $X$ embedded in the complex manifold $M$, the MacPherson Chern class $c_*(\jeden_X)$ corresponds to the Schwartz class $c^S(X) \in H^{*}_X(M) = H^*(M, M \setminus X)$ (see \cite{Schw1, Schw2}) by  Alexander duality.}.
\item Baum--Fulton--MacPherson's Todd class \cite{BFM}: \emph{There exists a unique natural transformation 
$$td_*: G_0(X) \to H_*(X)\otimes \bQ,$$
such that for a nonsingular variety $X$, $td_*(\mathcal O_X) = td(TX) \cap [X]$.}
Here, $G_0$ is the covariant functor assigning to $X$ the Grothendieck group $G_0(X)$ of coherent sheaves on $X$, and $$td_*(X):=td_*(\mathcal O_X)$$ is called the Todd class of the (possibly singular) variety $X$.
\item Goresky-- MacPherson's homology $L$-class \cite{GM}, which is extended as a natural transformation by S. Cappell and J. Shaneson \cite{CS} (also see \cite{Yokura-TAMS}): \emph{For a compact variety $X$, there exists a unique natural transformation 
$$L_*: \Omega(X) \to  H_*(X)\otimes \bQ,$$
such that for a nonsingular variety $X$, $L_*(\mathbb Q_X[\op{dim}_{\bC}X]) = L(TX) \cap [X]$.}
Here, $\Omega$ is the covariant functor assigning to $X$ the cobordism group $\Omega(X)$ of self-dual constructible sheaf complexes on $X$. The value $$L_*(X):=L_*(IC_X)$$ on the Deligne intersection sheaf complex \cite{GM2} is called the homology $L$-class of $X$.
\end{enumerate}

The motivic Hirzebruch class transformation ${T_y}_*: K_0(\m V/X) \to H_*(X)\otimes \bQ[y]$ ``unifies" the above three characteristic classes $c_*, td_*, L_*$ in the sense that there exist commutative diagrams (see \cite{BSY1}):
$$\xymatrix{
& K_0(\Cal V/X)  \ar [dl]_{\epsilon} \ar [dr]^{{T_{-1}}_*} \\
{F(X) } \ar [rr] _{c_*\otimes \mathbb Q}& &  H_*(X)\otimes \bQ.}
$$
$$\xymatrix{
&  K_0(\Cal V/X)  \ar [dl]_{\Gamma} \ar [dr]^{{T_{0}}_*} \\
{G_0(X) } \ar [rr] _{td_*}& &  H_*(X)\otimes \bQ.}
$$
$$\xymatrix{
& K_0(\Cal V/X)  \ar [dl]_{\omega} \ar [dr]^{{T_{1}}_*} \\
{\Omega(X) } \ar [rr] _{L_*}& &  H_*(X)\otimes \bQ.}
$$
This ``unification" should be viewed as a positive answer to the following remark which is stated at the very end of MacPherson's survey article \cite{MacPherson2} (cf. \cite{Yokura-Banach}) 
\emph{``It remains to be seen whether there is a unified theory of characteristic classes of singular varieties like the classical one outlined above."}\footnote{At that time, the Goresky--MacPherson homology $L$-class was not yet available; it was defined only after the theory of intersection homology \cite{GM} was introduced by M. Goresky and R. MacPherson in 1980.} 
\begin{rem}\label{ide} It was shown in \cite{BSY1} that $T_{-1*}(X)=c_*(X)\otimes \mathbb Q$. However, in general, $T_{0*}(X) \not =td_*(X)$ and $T_{1*}(X) \not= L_*(X)$. Furthermore, it was shown in \cite{BSY1} that $T_{0*}(X) =td_*(X)$ if $X$ has at most Du Bois singularities (e.g., rational singularities), and it was 
conjectured in \cite{BSY1,SY} that $T_{1*}(X) = L_*(X)$ if $X$ is a rational homology manifold. For instances where this conjecture has been proven, see \cite{Banagl, CMSS0, CMSS, MS2}. In particular, if $X$ is a toric variety, then $T_{0*}(X) =td_*(X)$, and it was shown in \cite{MS2} that if $X$ is a simplicial projective toric variety, then  
$T_{1*}(X) = L_*(X)$. \end{rem}

\begin{defn}\label{Hc}
Following \cite{BSY1}, we call 
\begin{center} $td_*^H(X):=T_{0*}(X)$ and $L_*^H(X):=T_{1*}(X)$ \end{center}
 the \emph{Hodge--Todd class} and,  respectively, the \emph{Hodge $L$-class} of $X$.
Similarly,  \begin{center} $\tau^H(X):=\chi_{0}(X)$ and $\sigma^H(X):=\chi_1(X)$ \end{center}
will be called the \emph{Hodge--Todd genus} and, respectively, the \emph{Hodge signature} of $X$.
\end{defn}

\section{Congruence formulae for motivic Hirzebruch classes}

Recall that the Hirzebruch $\chi_y$-genus $\chi_y(X)$ of a (possibly singular) complex $n$-dimensional algebraic variety $X$, is a degree $n$ polynomial expressed as:
$$\chi_y(X) = \chi^0(X)+ \chi^1(X)y + \chi^2(X)y^2+\cdots +\chi^n(X)y^n  \, \, \, \in \mathbb Z[y].$$
Similarly, as shown in \cite{BSY1,Sch}, the motivic Hirzebruch class $T_{y*}(X)$ can also be expressed as:
$$T_{y*}(X) =T^0_*(X) + T^1_*(X)y + T^2_*(X)y^2 + \cdots + T^n_*(X)y^n \, \, \, \in H_*(X)\otimes \mathbb Q[y].$$

Let us now assume that $X$ is a \emph{compact} variety. Then the $\chi_y$-genus $\chi_y(X)\in \bZ[y]\subset \bQ[y]$ is the degree of the motivic Hirzebruch class $T_{y*}(X)$.

By Definition \ref{Hc} and Remark \ref{ide} we have that
\begin{enumerate}
\item $c_*(X)\otimes \mathbb Q = T_{-1*}(X)=  T^0_*(X) - T^1_*(X) + T^2_*(X) - T^3_*(X)  \cdots + (-1)^nT^n_*(X).$
\item $td_*^H(X)  = T_{0*}(X)=  T^0_*(X)$
\item $L_*^H(X) = T_{1*}(X)=  T^0_*(X) +T^1_*(X) + T^2_*(X) + T^3_*(X)  \cdots + T^n_*(X).$
\end{enumerate}
and, similarly,
\begin{enumerate}
\item $\chi(X) =\chi^0(X)- \chi^1(X) + \chi^2(X)+\cdots +\chi^n(X)(-1)^n$
\item $\tau^H(X)  = \chi_0(X) = \chi^0(X).$
\item $\sigma^H(X) = \chi_1(X) = \chi^0(X)+ \chi^1(X) + \chi^2(X)+\cdots +\chi^n(X).$
\end{enumerate}
For convenience, let us introduce the following notations:
\begin{itemize}
\item $T_*^{\op{even}}(X) := \sum_{i\geqq 0} T_*^{2i}(X) \,\, \text{the even part}, \quad T_*^{\op{odd}}(X) := \sum_{i\geqq 0} T_*^{2i+1}(X) \, \, \text{the odd part},$
\item $\chi^{\op{even}}(X) := \sum_{i\geqq 0} \chi^{2i}(X) \,\, \text{the even part}, \quad \chi^{\op{odd}}(X) := \sum_{i\geqq 0} \chi^{2i+1}(X) \, \, \text{the odd part}.$
\end{itemize}
Then we have the following identities:
\begin{equation}\label{formula1}
L_*^H(X) + c_*(X)\otimes \mathbb Q  = 2 T_*^{\op{even}}(X), \quad L_*^H(X) - c_*(X)\otimes \mathbb Q  = 2 T_*^{\op{odd}}(X),
\end{equation}
\begin{equation}\label{formula1b}
\sigma^H(X) + \chi(X)= 2 \chi^{\op{even}}(X), \quad \sigma^H(X) - \chi(X) = 2 \chi^{\op{odd}}(X),
\end{equation}
from which we get the following congruence formula:
\begin{lem}\label{lem1} With the above notations and assumptions, we have:
$$T_{y*}(X) \equiv \frac{c_*(X)\otimes \mathbb Q}{2} (1-y) +\frac{L_*^H(X)}{2} (1+y)  \,\, \, \, \op{mod} \Bigl (H_{*}(X) \otimes \mathbb Q[y] \Bigr)(1-y^2).$$
\end{lem}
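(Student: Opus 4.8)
The plan is to prove Lemma \ref{lem1} by a direct computation modulo the ideal generated by $1-y^2$, exploiting the even/odd decomposition of the motivic Hirzebruch class. The key observation is that working modulo $(1-y^2)$ amounts to specializing $y^2=1$, so that every even power of $y$ collapses to $1$ and every odd power collapses to $y$. First I would apply this specialization directly to the expansion $T_{y*}(X)=\sum_{i=0}^n T_*^i(X)y^i$ displayed just before the lemma. Grouping the even-indexed and odd-indexed terms, the congruence $y^{2i}\equiv 1$ and $y^{2i+1}\equiv y \pmod{1-y^2}$ yields
\begin{equation*}
T_{y*}(X) \equiv T_*^{\op{even}}(X) + T_*^{\op{odd}}(X)\, y \,\, \op{mod} \Bigl(H_*(X)\otimes \mathbb Q[y]\Bigr)(1-y^2).
\end{equation*}

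Next I would substitute the two identities from (\ref{formula1}), namely $2T_*^{\op{even}}(X)=L_*^H(X)+c_*(X)\otimes \mathbb Q$ and $2T_*^{\op{odd}}(X)=L_*^H(X)-c_*(X)\otimes \mathbb Q$, into the congruence above. This gives
\begin{equation*}
T_{y*}(X) \equiv \frac{L_*^H(X)+c_*(X)\otimes \mathbb Q}{2} + \frac{L_*^H(X)-c_*(X)\otimes \mathbb Q}{2}\, y \,\, \op{mod}\, (1-y^2),
\end{equation*}
and regrouping the coefficients of $c_*(X)\otimes \mathbb Q$ and of $L_*^H(X)$ produces exactly the claimed expression $\frac{c_*(X)\otimes \mathbb Q}{2}(1-y)+\frac{L_*^H(X)}{2}(1+y)$. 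The whole argument is essentially a rearrangement, so no genuine obstacle arises; the only point requiring minor care is bookkeeping of the modulus, i.e.\ justifying that the even/odd collapse is legitimate.

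To make that justification clean, I would invoke the isomorphism recorded in the Remark following Theorem \ref{thm0}: reducing modulo $(1-y^2)$ and specializing $y^2=1$ identifies $H_*(X)\otimes \mathbb Q[y]/(1-y^2)$ with $H_*(X)\otimes(\mathbb Q\oplus \mathbb Q\cdot y)$. Under this identification the class $y^i$ maps to $1$ when $i$ is even and to $y$ when $i$ is odd, which is precisely what the computation above uses. Since both identities in (\ref{formula1}) are exact equalities in $H_*(X)\otimes \mathbb Q[y]$ (not merely congruences), the substitution step introduces no error, and the resulting congruence holds modulo $(1-y^2)$ as stated. The degree-level analogue (the integer identities (\ref{formula1b})) is not needed for this lemma but confirms consistency upon applying the pushforward $(a_X)_*$ to a point.

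I expect no serious difficulty here; the lemma is a formal consequence of the even/odd splitting of $T_{y*}$ together with the elementary identities in (\ref{formula1}). If anything, the "hard part" is purely expository, namely stating the specialization modulo $(1-y^2)$ precisely enough that the collapse $y^{2i}\equiv 1$, $y^{2i+1}\equiv y$ is unambiguous; once that convention is fixed the proof is a two-line substitution.
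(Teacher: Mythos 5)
Your proposal is correct and follows essentially the same route as the paper's own proof: specialize $y^2=1$ in the expansion $T_{y*}(X)=\sum_i T_*^i(X)y^i$, collapse to $T_*^{\op{even}}(X)+T_*^{\op{odd}}(X)\,y$, and substitute the identities (\ref{formula1}) to regroup into $\frac{c_*(X)\otimes \mathbb Q}{2}(1-y)+\frac{L_*^H(X)}{2}(1+y)$. Your added care about the meaning of the modulus, via the isomorphism $H_*(X)\otimes \mathbb Q[y]/(1-y^2)\simeq H_*(X)\otimes(\mathbb Q\oplus\mathbb Q\cdot y)$, is exactly the convention the paper records in the remark after Theorem \ref{thm0}.
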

\begin{proof} 
If we let $y^2=1$ in $T_{y*}(X) =T^0_*(X) + T^1_*(X)y + T^2_*(X)y^2 + \cdots + T^n_*(X)y^n$, then we have
\begin{align*}
& T_{y*}(X)  = \sum T_*^i(X)y^i \\
& \equiv T_*^0(X) + T_*^1(X)y + T_*^2(X) +T_*^3(X)y +T_*^4(X) + \cdots   \, \, \, \op{mod} \Bigl (H_{*}(X) \otimes \mathbb Q[y] \Bigr)(1-y^2)\\
& = T_*^{\op{even}}(X) + T_*^{\op{odd}}(X)y \, \, \,\, \,\, \,\op{mod} \Bigl (H_{*}(X) \otimes \mathbb Q[y] \Bigr)(1-y^2)\\
 & \overset{(\ref{formula1})}{=} \frac{L_*^H(X) + c_*(X)\otimes \mathbb Q}{2} + \frac{L_*^H(X) - c_*(X)\otimes \mathbb Q}{2}y   \, \, \,\,  \op{mod} \Bigl (H_{*}(X) \otimes \mathbb Q[y] \Bigr)(1-y^2)  \\
& =\frac{c_*(X)\otimes \mathbb Q}{2}(1-y) +\frac{L_*^H(X)}{2}(1+y) \,\,   \,\, \op{mod} \Bigl (H_{*}(X) \otimes \mathbb Q[y] \Bigr)(1-y^2) .
\end{align*}
\end{proof}
\begin{rem}\label{rem} Since $X$ is compact, by taking the degree in the formula of Lemma \ref{lem1} we get:
$$\chi_y(X)  \equiv \frac{\chi(X)}{2} (1-y) +\frac{\sigma^H(X)}{2} (1+y)  \,\, \, \, \op{mod} \bigl (\mathbb Z[y] \bigr)(1-y^2) \, \,  \text {i.e.},\op{mod} \, 1-y^2.$$
\end{rem}
Finer congruences involving also 
the Hodge--Todd class $td_*^H(X) =T_{0*}(X)=T^0_*(X)$ can be obtained as follows:
\begin{lem}\label{lem2}
\begin{align*}
& T_{y*}(X)  \equiv \frac{c_*(X)\otimes \mathbb Q}{2}(y^2 -y) + td_*^H(X)(1-y^2) + \frac{L_*^H(X)}{2}(y+y^2) \\
& \hspace{8cm} \op{mod} \Bigl (H_{*}(X) \otimes \mathbb Q[y] \Bigr)(y-y^3).
\end{align*}
\end{lem}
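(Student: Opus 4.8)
The plan is to mimic the proof of Lemma \ref{lem1}, but now reducing modulo $y-y^3$ instead of $1-y^2$. The key observation is that the quotient $\bigl(H_*(X)\otimes \mathbb Q[y]\bigr)/(y-y^3)$ is spanned over $H_*(X)\otimes\mathbb Q$ by $1$, $y$ and $y^2$: since $y^3\equiv y$, every power $y^i$ with $i\geq 1$ collapses to $y$ if $i$ is odd and to $y^2$ if $i$ is even, while $y^0=1$ stays put.

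First I would start from the expansion $T_{y*}(X)=\sum_i T_*^i(X)y^i$ and apply the reduction $y^3=y$ term by term, carefully isolating the constant term $T_*^0(X)$ from the rest. This yields
$$T_{y*}(X)\equiv T_*^0(X)+T_*^{\op{odd}}(X)\,y+\bigl(T_*^{\op{even}}(X)-T_*^0(X)\bigr)y^2 \quad \op{mod} \Bigl(H_*(X)\otimes\mathbb Q[y]\Bigr)(y-y^3),$$
the $-T_*^0(X)$ correction appearing because the degree-zero piece is the only even-degree term that does not reduce to $y^2$.

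Next I would substitute the identities (\ref{formula1}), namely $2T_*^{\op{even}}(X)=L_*^H(X)+c_*(X)\otimes\mathbb Q$ and $2T_*^{\op{odd}}(X)=L_*^H(X)-c_*(X)\otimes\mathbb Q$, together with the definition $td_*^H(X)=T_*^0(X)$. Collecting the coefficients of $1$, $y$ and $y^2$ then gives exactly the claimed right-hand side, which one verifies by expanding $\frac{c_*(X)\otimes\mathbb Q}{2}(y^2-y)+td_*^H(X)(1-y^2)+\frac{L_*^H(X)}{2}(y+y^2)$ and matching the three coefficients: the constant term is $td_*^H(X)$, the coefficient of $y$ is $\frac{L_*^H(X)-c_*(X)\otimes\mathbb Q}{2}=T_*^{\op{odd}}(X)$, and the coefficient of $y^2$ is $\frac{L_*^H(X)+c_*(X)\otimes\mathbb Q}{2}-td_*^H(X)=T_*^{\op{even}}(X)-T_*^0(X)$.

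There is no serious obstacle here; once the reduction rule $y^3\equiv y$ is in hand the computation is entirely formal. The only point requiring care is the bookkeeping in the even part: because $T_*^0(X)=td_*^H(X)$ must be singled out as the constant term, the remaining even-degree contribution is $T_*^{\op{even}}(X)-T_*^0(X)$ rather than $T_*^{\op{even}}(X)$, and this is precisely what produces the $td_*^H(X)(1-y^2)$ summand in the final formula.
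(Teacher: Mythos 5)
Your proposal is correct and is essentially identical to the paper's own proof: the same term-by-term reduction modulo $y^3=y$, the same isolation of $T_*^0(X)=td_*^H(X)$ producing the $\bigl(T_*^{\op{even}}(X)-T_*^0(X)\bigr)y^2$ correction, and the same substitution of the identities (\ref{formula1}) followed by matching coefficients of $1$, $y$, $y^2$. Nothing is missing.
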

\begin{proof}
If we let $y^3=y$ 
in $T_{y*}(X) =T^0_*(X) + T^1_*(X)y + T^2_*(X)y^2 + \cdots + T^n_*(X)y^n$, then we have
\begin{align*}
& T_{y*}(X)  = \sum T_*^i(X)y^i \\
& \equiv T_*^0(X) + T_*^1(X)y + T_*^2(X)y^2 +T_*^3(X)y +T_*^4(X)y^2 + \cdots \\
& \hspace{9cm} 
\,\op{mod} \Bigl (H_{*}(X) \otimes \mathbb Q[y] \Bigr)(y-y^3)
\\
& = T_*^0(X) + T_*^{\op{odd}}(X)y + \Bigl (T_*^{\op{even}}(X) - T_*^0(X) \Bigr )y^2 \, \,\, \op{mod} \Bigl (H_{*}(X) \otimes \mathbb Q[y] \Bigr)(y-y^3) \\
 & \overset{(\ref{formula1})}{=}  td_*^H(X) + \frac{L_*^H(X) - c_*(X)\otimes \mathbb Q}{2}y + \left(\frac{L_*^H(X) + c_*(X)\otimes \mathbb Q}{2} -td_*^H(X) \right) y^2  \,\\
 & \hspace{8cm}  \op{mod} \Bigl (H_{*}(X) \otimes \mathbb Q[y] \Bigr)(y-y^3)  \\
& =  \frac{c_*(X)\otimes \mathbb Q}{2}(y^2 -y) + td_*^H(X)(1-y^2) + \frac{L_*^H(X)}{2}(y+y^2) \\
& \hspace{8cm}\op{mod} \Bigl (H_{*}(X) \otimes \mathbb Q[y] \Bigr)(y-y^3).
\end{align*}
\end{proof}
\begin{rem} 
Since $X$ is compact, by taking the degree in the formula of Lemma \ref{lem2} we get:
$$\chi_y(X) \equiv \frac{\chi(X)}{2}(y^2 -y) + \tau^H(X)(1-y^2) + \frac{\sigma^H(X)}{2}(y+y^2) \, \, \op{mod} \,\, y-y^3. \hspace{2cm}$$
\end{rem}
\begin{rem} We note that if we let $y^2=1$ in the congruence formula of Lemma \ref{lem2}, we get the formula from Lemma \ref{lem1}, so from this point of view Lemma \ref{lem2} presents a finer congruence identity.
\end{rem} 
Combining the results of Lemma \ref{lem1} and Lemma \ref{lem2},  we get the following congruence formulae:

\begin{cor}\label{corollary} Let $f:X \to Z$ and $g:Y \to Z$ be morphisms of complex algebraic varieties with the same target variety $Z$, and assume that $X$, $Y$ and $Z$  are compact. Then we have the following congruence identities for the difference $f_*T_{y*}(X) - g_*T_{y*}(Y)$:
\begin{align*}
(1) \, & f_*T_{y*}(X) - g_*T_{y*}(Y) \equiv \frac{f_*c_*(X)\otimes \mathbb Q -g_*c_*(Y)\otimes \mathbb Q}{2}(1-y)+ \frac{f_*L_*^H(X) -g_*L_*^H(Y)}{2}(1+y) 
 \\
& \hspace{9cm} \op{mod} \Bigl (H_{*}(Z) \otimes \mathbb Q[y] \Bigr)(1-y^2).
\end{align*}
\begin{align*}
(2) \, & f_*T_{y*}(X) - g_*T_{y*}(Y) \equiv \frac{f_*c_*(X)\otimes \mathbb Q -g_*c_*(Y)\otimes \mathbb Q}{2}(y^2 -y) 
\\
& \hspace{3cm} +\Bigl( f_*td_*^H(X)-g_*td_*^H(Y) \Bigr)(1-y^2) 
+ \frac{f_*L_*^H(X) -g_*L_*^H(Y)}{2}(y+y^2) \\
& \hspace{9cm} \op{mod} \Bigl (H_{*}(Z) \otimes \mathbb Q[y] \Bigr)(y-y^3).
\end{align*}
In particular, by taking the degrees in (1) and (2), we have:
$$ (3) \, \, \chi_y(X) - \chi_y(Y) \equiv  \frac{\chi(X) -\chi(Y)}{2}(1-y) + \frac{\sigma^H(X) -\sigma^H(Y)}{2}(1+y)  \, \, \,\, \op{mod} \, 1-y^2. \qquad \qquad$$
\begin{align*}
(4) \, \, \, \chi_y(X) - \chi_y(Y) \equiv & \frac{\chi(X) -\chi(Y)}{2}(y^2 -y) + \left (\tau^H(X)-\tau^H(Y) \right )(1-y^2) \hspace{3cm} \\
&   \hspace{4cm}  +  \frac{\sigma^H(X) -\sigma^H(Y)}{2}(y+y^2)  \,\, \, \,\,\op{mod} \, y-y^3
\end{align*}

\end{cor}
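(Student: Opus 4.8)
The plan is to reduce everything to the single-variety congruences already established in Lemma \ref{lem1} and Lemma \ref{lem2}, and then to transport them from $X$ and $Y$ to the common target $Z$ via the proper pushforwards $f_*$ and $g_*$. The one structural fact that makes this work is that the induced maps $f_* : H_*(X)\otimes\mathbb{Q}[y] \to H_*(Z)\otimes\mathbb{Q}[y]$ and $g_* : H_*(Y)\otimes\mathbb{Q}[y] \to H_*(Z)\otimes\mathbb{Q}[y]$ are $\mathbb{Q}[y]$-linear: they act only on the homology factor and commute with multiplication by $y$. Hence each of them carries the submodule $\bigl(H_*(X)\otimes\mathbb{Q}[y]\bigr)(1-y^2)$ into $\bigl(H_*(Z)\otimes\mathbb{Q}[y]\bigr)(1-y^2)$, and similarly for the ideal $(y-y^3)$. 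In other words, a congruence modulo $(1-y^2)$ (resp. $(y-y^3)$) over $X$ pushes forward to a congruence modulo the corresponding ideal over $Z$.

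With this observation in hand, the proof of (1) is immediate: I would apply Lemma \ref{lem1} to $X$, push the resulting congruence forward by $f_*$, do the same for $Y$ with $g_*$, and subtract. The $\mathbb{Q}[y]$-linearity of $f_*$ and $g_*$ lets them pass through the scalar factors $\tfrac{1}{2}(1-y)$ and $\tfrac{1}{2}(1+y)$, so that $f_*$ lands on $c_*(X)\otimes\mathbb{Q}$ and $L_*^H(X)$ (and likewise $g_*$ on the $Y$-classes), while the error term remains in $\bigl(H_*(Z)\otimes\mathbb{Q}[y]\bigr)(1-y^2)$. This yields formula (1) verbatim. Formula (2) is obtained in exactly the same way, starting from Lemma \ref{lem2} and working modulo the ideal $(y-y^3)$.

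For the degree statements (3) and (4), I would apply the pushforward to a point $(a_Z)_* : H_*(Z)\otimes\mathbb{Q}[y] \to \mathbb{Q}[y]$ to the identities (1) and (2). Functoriality of the transformations gives $(a_Z)_*\circ f_* = (a_X)_*$ and $(a_Z)_*\circ g_* = (a_Y)_*$, so by the degree formula \eqref{deg} we have $(a_Z)_* f_* T_{y*}(X)=\chi_y(X)$, and similarly $(a_Z)_* f_*\bigl(c_*(X)\otimes\mathbb{Q}\bigr)=\chi(X)$, $(a_Z)_* f_* L_*^H(X)=\sigma^H(X)$, and $(a_Z)_* f_* td_*^H(X)=\tau^H(X)$, together with the analogous identities for $Y$. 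Substituting these degrees into (1) and (2) produces (3) and (4); note that $(a_Z)_*$ sends $\bigl(H_*(Z)\otimes\mathbb{Q}[y]\bigr)(1-y^2)$ into the ideal $(1-y^2)\subset\mathbb{Q}[y]$ (resp. $(y-y^3)$ into $(y-y^3)$), so the moduli are preserved under taking degrees.

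Finally, I expect the only point requiring genuine care — rather than routine bookkeeping — to be the compatibility of the two different moduli $\bigl(H_*(X)\otimes\mathbb{Q}[y]\bigr)(1-y^2)$ over $X$ and $\bigl(H_*(Z)\otimes\mathbb{Q}[y]\bigr)(1-y^2)$ over $Z$ under $f_*$. This is precisely what the $\mathbb{Q}[y]$-linearity of the pushforward guarantees, so there is no serious obstacle; the argument is a direct assembly of Lemma \ref{lem1}, Lemma \ref{lem2}, and the functoriality of the motivic Hirzebruch class transformation.
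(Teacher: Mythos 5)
Your proposal is correct and matches the paper's own (largely implicit) argument: the paper states this corollary as an immediate consequence of Lemma \ref{lem1} and Lemma \ref{lem2}, obtained exactly as you describe by pushing each lemma forward along the proper maps $f$ and $g$ (using $\mathbb{Q}[y]$-linearity of the pushforwards to preserve the ideals $(1-y^2)$ and $(y-y^3)$) and subtracting, with the degree formulae (3) and (4) following by applying $(a_Z)_*$ and the degree identity \eqref{deg} at $y=-1,0,1$. The only detail you supply that the paper leaves unstated is the explicit verification that the moduli are compatible under pushforward, which is indeed the routine point.
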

\begin{rem} The above formulae (3) and (4) can also be regarded as special cases of (1) and (2), respectively, for the case when the target variety $Z$ is a point.

\end{rem}
\begin{rem} In category theory, a pair of morphisms $f:X \to Z$ and $g:Y \to Z$ with the same target $Z$ 
is called \emph{a cospan $X \xrightarrow f Z \xleftarrow {g} Y$}. This notion is dual to the notion of \emph{span} (or \emph{correspondence}), which consists of a pair of  morphisms $f:Z \to X$ and $g:Z \to Y$ with the same source variety: $X \xleftarrow f Z \xrightarrow g Y$.  The formulae of Corollary \ref{corollary} can be regarded as congruence formulae for cospans of complex algebraic varieties.
\end{rem}

\section{Application to complex algebraic fiber bundles}
In this section, we consider a cospan of the form: 
$$E \xrightarrow f B \xleftarrow {g} F \times B$$
where $E, F, B$ are complex algebraic varieties, and $g=pr_2:F \times B \to B$ is the second factor projection with fiber the compact algebraic variety $F$.

Recall from \cite{BSY1} that the motivic Hirzebruch class transformation $T_{y*}$ commutes with cross-products (cf. \cite{K, KY}). In particular, we have:
\begin{pro}[Cross product formula] 
\begin{equation}\label{cp} T_{y*}(X \times Y) = T_{y*}(X) \times T_{y*}(Y).\end{equation}
\end{pro}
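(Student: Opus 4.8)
The plan is to lift the desired identity to a statement about external products in the relative Grothendieck group and then reduce to the smooth case, where it follows from the multiplicativity of the cohomology Hirzebruch class. First I would introduce the external product on $K_0(\m V/-)$, namely $[V \xrightarrow{h} X] \times [W \xrightarrow{k} Y] := [V \times W \xrightarrow{h \times k} X \times Y]$, which is well-defined because it respects the scissor relations (indeed $(V' \sqcup V'') \times W = (V' \times W) \sqcup (V'' \times W)$), together with the homology external product $\times \colon H_*(X) \otimes H_*(Y) \to H_*(X \times Y)$. The goal then becomes the commutativity of $T_{y*}$ with these two products, i.e. $T_{y*}(\alpha \times \beta) = T_{y*}(\alpha) \times T_{y*}(\beta)$ for all $\alpha \in K_0(\m V/X)$ and $\beta \in K_0(\m V/Y)$; the Proposition is the special case $\alpha = [X \xrightarrow{\op{id}} X]$, $\beta = [Y \xrightarrow{\op{id}} Y]$.

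Both sides of this identity are biadditive in $(\alpha,\beta)$ and hence factor through $K_0(\m V/X) \otimes K_0(\m V/Y)$, so it suffices to verify it on a generating set. By resolution of singularities together with the scissor relations (applied after compactifying $h$), every class in $K_0(\m V/X)$ is a combination of generators $[V \xrightarrow{h} X]$ with $V$ smooth and $h$ proper. For two such generators $[V \xrightarrow{h} X]$ and $[W \xrightarrow{k} Y]$, naturality of $T_{y*}$ for the proper maps $h$, $k$ and $h \times k$, together with the normalization on smooth varieties, gives
$$T_{y*}([V \times W \xrightarrow{h \times k} X \times Y]) = (h \times k)_* \bigl( T_y(T(V \times W)) \cap [V \times W] \bigr).$$

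Now I would use that $T(V \times W) \cong pr_1^* TV \oplus pr_2^* TW$ and that $T_y$ is a multiplicative characteristic class, so that $T_y(T(V\times W)) = pr_1^* T_y(TV) \cdot pr_2^* T_y(TW)$; combined with $[V \times W] = [V] \times [W]$ and the cap/cross-product identity $(pr_1^* a \cup pr_2^* b) \cap (x \times y) = (a \cap x) \times (b \cap y)$, this yields $T_y(T(V\times W)) \cap [V \times W] = (T_y(TV)\cap [V]) \times (T_y(TW) \cap [W])$. Finally, the standard compatibility $(h \times k)_*(u \times v) = h_* u \times k_* v$ of proper pushforward with homology cross products lets me factor the right-hand side as $T_{y*}([V \xrightarrow{h} X]) \times T_{y*}([W \xrightarrow{k} Y])$, completing the computation on generators and hence, by biadditivity, in general.

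The main obstacle is the reduction to smooth proper generators: one must check that the external product is compatible with the defining relations of $K_0(\m V/-)$ and that both sides genuinely factor through the tensor product, and one must invoke resolution of singularities to generate $K_0(\m V/X)$ by classes of smooth varieties mapping properly to $X$. Once this reduction is in place the computation is purely formal, resting only on the multiplicativity of $T_y$, the normalization, and the naturality of $T_{y*}$ for proper morphisms. Note in particular that $X$ and $Y$ themselves may be singular, so smoothness is used only at the level of generators and not for the final specialization to the identity maps.
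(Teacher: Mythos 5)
Your proof is correct, but note that the paper does not actually prove this proposition at all: it is simply recalled from \cite{BSY1} (cf.\ also \cite{K}, \cite{KY}), where the compatibility of $T_{y*}$ with external products is established. Your argument reconstructs, using only the two defining properties recalled in \S 3 (the normalization $T_{y*}([V \xrightarrow{\op{id}} V]) = T_y(TV)\cap [V]$ for $V$ smooth, and functoriality for proper morphisms), what is essentially the standard proof of the cited fact: the external product on $K_0(\mathcal V/-)$ respects the scissor relations, both sides of the identity are biadditive, $K_0(\mathcal V/X)$ is generated by classes $[V \xrightarrow{h} X]$ with $V$ smooth and $h$ proper (Nagata compactification plus Hironaka resolution and induction on dimension --- this is the one ingredient that genuinely needs characteristic zero and which you correctly isolate), and on such generators the identity reduces to the Whitney-sum multiplicativity of the cohomology class $T_y$, the relation $(pr_1^*a \cup pr_2^*b)\cap([V]\times[W]) = (a\cap[V])\times(b\cap[W])$ (no sign issues, since all classes here live in even degrees, the paper's $H_*$ being even-degree Borel--Moore homology), and compatibility of proper pushforward with homology cross products. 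What the paper's citation buys is brevity; what your argument buys is self-containedness modulo the Brasselet--Sch\"urmann--Yokura existence theorem of \S 3, and it in fact yields the stronger statement $T_{y*}(\alpha \times \beta) = T_{y*}(\alpha)\times T_{y*}(\beta)$ for arbitrary classes $\alpha \in K_0(\mathcal V/X)$, $\beta \in K_0(\mathcal V/Y)$, of which the displayed formula is the special case $\alpha = [X \xrightarrow{\op{id}} X]$, $\beta = [Y \xrightarrow{\op{id}} Y]$.
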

\begin{cor} In the above notations, we have: 
\begin{equation}\label{proj} (pr_2)_*T_{y*}(F \times B)= \chi_y(F)T_{y*}(B).\end{equation}
\end{cor}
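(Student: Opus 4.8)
The goal is to prove the projection formula
$$(pr_2)_*T_{y*}(F \times B)= \chi_y(F)T_{y*}(B),$$
and the plan is to deduce it directly from the cross product formula \eqref{cp} together with the functoriality of $T_{y*}$ under proper pushforward and the normalization property $(a_F)_*T_{y*}(F)=\chi_y(F)$ (recall \eqref{deg}). The whole argument is essentially a formal manipulation, so I do not expect any genuine obstacle; the only care needed is in correctly tracking how the cross product interacts with the projection.

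First I would apply \eqref{cp} to the product $F \times B$, obtaining
$$T_{y*}(F \times B) = T_{y*}(F) \times T_{y*}(B) \in H_*(F\times B)\otimes \bQ[y].$$
Next I would push this class forward along $pr_2 \colon F\times B \to B$. The key structural fact is that the homological cross product is compatible with proper pushforward in each factor separately: writing $pr_2 = \op{id}_B \times a_F$ under the identification $F\times B \cong B \times F$ (where $a_F\colon F\to pt$ is the constant map), one has the standard identity
$$(pr_2)_*\bigl(\alpha \times \beta\bigr) = \bigl((a_F)_*\alpha\bigr)\cdot \beta$$
for $\alpha \in H_*(F)\otimes \bQ[y]$ and $\beta \in H_*(B)\otimes \bQ[y]$, where on the right $(a_F)_*\alpha \in H_*(pt)\otimes\bQ[y]=\bQ[y]$ is a scalar (polynomial) acting on $\beta$ by multiplication. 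Applying this with $\alpha = T_{y*}(F)$ and $\beta = T_{y*}(B)$ gives
$$(pr_2)_*T_{y*}(F\times B) = \bigl((a_F)_*T_{y*}(F)\bigr)\cdot T_{y*}(B).$$

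Finally I would invoke \eqref{deg}, which identifies the degree $(a_F)_*T_{y*}(F)$ of the motivic Hirzebruch class of the compact variety $F$ with its Hirzebruch $\chi_y$-genus, i.e. $(a_F)_*T_{y*}(F)=\chi_y(F)$. Substituting this scalar into the previous display yields
$$(pr_2)_*T_{y*}(F\times B) = \chi_y(F)\, T_{y*}(B),$$
which is exactly \eqref{proj}. The step I would be most careful about is the compatibility of the cross product with pushforward along a projection and the resulting reduction of one factor to its degree; this is the conceptual heart of the computation, although it is a routine property of the homological cross product once $F$ is assumed compact so that $a_F$ is proper and $(a_F)_*$ is defined.
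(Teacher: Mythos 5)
Your proposal is correct and follows essentially the same argument as the paper: factor $pr_2$ as the product of the constant map $a_F$ and $\op{id}_B$, apply the cross product formula \eqref{cp}, use compatibility of pushforward with cross products in each factor, and conclude via the degree identity \eqref{deg} that $(a_F)_*T_{y*}(F)=\chi_y(F)$. The only cosmetic difference is your use of the identification $F\times B\cong B\times F$, whereas the paper writes $pr_2=a_F\times \op{id}_B\colon F\times B\to pt\times B=B$ directly.
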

\begin{proof} Let $a_F:F \to pt$ be the map to a point. Then the projection $pr_2:F \times B \to B$ is the same as the product of two morphisms $\op{id}_B:B \to B$ and $a_F:F \to pt$:
$$pr_2:=a_F \times \op{id}_B:F \times B \to pt \times B = B.$$
Hence we have
\begin{equation*}\begin{split}
(pr_2)_*T_{y*}(F \times B) &  \;  \;   = (a_F \times \op{id}_B)_*T_{y*}(F \times B) \\
& \overset{(\ref{cp})}{=} (a_F \times \op{id}_B)_*\left(T_{y*}(F)  \times T_{y*}(B) \right)\\
&  \;  \;  = (a_F)_* \times (\op{id}_B)_* \left(T_{y*}(F) \times T_{y*}(B) \right)\\
&  \;  \;  =  (a_F)_*T_{y*}(F)  \times (\op{id}_B)_*T_{y*}(B)\\
& \overset{(\ref{deg})}{=}  \chi_y(F) \times T_{y*}(B) \\
&  \;  \;  = \chi_y(F)T_{y*}(B).
\end{split}
\end{equation*}
\end{proof}

\begin{thm}\label{thm5} Let $f:E \to B$ be a proper morphism of complex algebraic varieties. Assume that all fibers of $f$ have the same Euler characteristic $\chi(F)$ as the compact algebraic variety $F$. Then we have: 
\begin{align*}
 (1) \, \, f_*T_{y*}(E) -& \chi_y(F)T_{y*}(B) \\
& \equiv \frac{f_*L_*^H(E) -\sigma^H(F)L_*^H(B)}{2} (1+y)  \,\, \,\, \,\,\op{mod} \Bigl (H_{*}(B) \otimes \mathbb Q[y] \Bigr)(1-y^2).
\end{align*}
If $B$ (and therefore $E$) is compact, then by taking the degree in (1) we have:
$$\chi_y(E) -\chi_y(F)\chi_y(B) \equiv \frac{\sigma^H(E)-\sigma^H(F)\sigma^H(B)}{2}(1+y) \,\, \op{mod} \, \, 1-y^2.$$
\begin{align*}
(2) \, \, f_*T_{y*}(E) -\chi_y(F)T_{y*}(B) & \equiv \Bigl (f_*td_*^H(E) -\tau^H(F)td_*^H(B) \Bigr)(1-y^2) \\
& \hspace{1cm}+ \frac{f_*L_*^H(E)-\sigma^H(F)L_*^H(B)}{2}(y+ y^2) \\
& \hspace{4cm}\op{mod} \Bigl (H_{*}(B) \otimes \mathbb Q[y] \Bigr)(y-y^3).
\end{align*}
If $B$ (and therefore $E$) is compact, then by taking the degree in (2) we have:
\begin{align*}
\chi_y(E) -\chi_y(F)\chi_y(B) & \equiv \Bigl (\tau^H(E) -\tau^H(F)\tau^H(B) \Bigr)(1-y^2)\\
& \hspace{2cm} + \frac{\sigma^H(E)-\sigma^H(F)\sigma^H(B)}{2}(y^2+y) \,\, \op{mod} \, \, y-y^3.
\end{align*}
\end{thm}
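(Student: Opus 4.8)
The plan is to apply the class-level congruences of Corollary \ref{corollary} to the cospan $E \xrightarrow{f} B \xleftarrow{pr_2} F \times B$, taking $X = E$, $Y = F \times B$, $Z = B$ and $g = pr_2$. Since $F$ is compact, $pr_2$ is proper, and $f$ is proper by hypothesis; by the remark following Theorem \ref{thm0} (equivalently, by pushing forward the identities of Lemma \ref{lem1} and Lemma \ref{lem2} along these proper maps), the two congruences of Corollary \ref{corollary} hold at the level of classes without requiring $B$ to be compact. The pushforward $(pr_2)_*T_{y*}(F \times B)$ appearing there is computed by the projection formula (\ref{proj}), namely $(pr_2)_*T_{y*}(F \times B) = \chi_y(F) T_{y*}(B)$. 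Specializing this polynomial identity at $y = 1, 0, -1$ yields $(pr_2)_*L_*^H(F \times B) = \sigma^H(F) L_*^H(B)$, $(pr_2)_*td_*^H(F \times B) = \tau^H(F) td_*^H(B)$, and $(pr_2)_*(c_*(F \times B) \otimes \mathbb Q) = \chi(F)(c_*(B) \otimes \mathbb Q)$, using $T_{-1*} = c_* \otimes \mathbb Q$ and $\chi_{-1}(F) = \chi(F)$.

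The decisive step, and the only point at which the hypothesis on the fibers is used, is the cancellation of the Chern class contributions. I would establish the identity
$$f_*(c_*(E) \otimes \mathbb Q) = (pr_2)_*(c_*(F \times B) \otimes \mathbb Q) = \chi(F)(c_*(B) \otimes \mathbb Q)$$
by invoking the naturality of MacPherson's Chern class transformation $c_*: F(-) \to H_*(-)$ for proper morphisms. The assumption that every fiber of $f$ has Euler characteristic $\chi(F)$ is exactly the statement $f_*(\jeden_E) = \chi(F) \cdot \jeden_B$ for the pushforward of constructible functions, since $(f_*\jeden_E)(b) = \chi(f^{-1}(b))$. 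Applying $c_*$ and using naturality gives $f_*c_*(E) = f_*c_*(\jeden_E) = c_*(f_*\jeden_E) = \chi(F)\,c_*(\jeden_B) = \chi(F)\,c_*(B)$, and rationalizing yields the claim. Consequently the coefficient of $(1-y)$ in Corollary \ref{corollary}(1) and that of $(y^2-y)$ in Corollary \ref{corollary}(2) both vanish.

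Substituting these computations into Corollary \ref{corollary}(1) and (2) leaves precisely the surviving terms asserted in Theorem \ref{thm5}: in (1) only $\frac{1}{2}(f_*L_*^H(E) - \sigma^H(F)L_*^H(B))(1+y)$ remains, and in (2) the Todd term $(f_*td_*^H(E) - \tau^H(F)td_*^H(B))(1-y^2)$ together with $\frac{1}{2}(f_*L_*^H(E) - \sigma^H(F)L_*^H(B))(y+y^2)$ remains. Finally, when $B$ (and hence $E$) is compact, I would push forward along $a_B: B \to pt$ and apply (\ref{deg}) to replace each class by its degree, obtaining the stated numerical congruences for $\chi_y(E) - \chi_y(F)\chi_y(B)$. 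I expect the main obstacle to be the verification of the Chern class cancellation — specifically, correctly translating the geometric hypothesis on fiberwise Euler characteristics into the constructible-function identity $f_*\jeden_E = \chi(F)\jeden_B$ and then invoking naturality of $c_*$; once this is in place, the rest is a direct substitution.
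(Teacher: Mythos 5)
Your proposal is correct and follows essentially the same route as the paper's own proof: applying Corollary \ref{corollary} to the cospan $E \xrightarrow{f} B \xleftarrow{pr_2} F\times B$, using the cross-product formula (\ref{proj}) to compute $(pr_2)_*T_{y*}(F\times B)=\chi_y(F)T_{y*}(B)$ and its specializations, and cancelling the Chern terms via $f_*\jeden_E=\chi(F)\jeden_B$ together with the naturality of MacPherson's transformation $c_*$. Your explicit handling of properness (so that the class-level congruences apply without compactness of $B$) is a point the paper leaves implicit, but the argument is the same.
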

\begin{proof} By our assumption on the fibers of $f$, we have that $f_*\jeden_E=\chi(F)\cdot \jeden_B$. Consider the projection $pr_2:F \times B \to B$. Then it follows from Corollary \ref{corollary} that
\begin{align*}
& f_*T_{y*}(E) - (pr_2)_*T_{y*}(F\times B) \equiv \frac{f_*c_*(E)\otimes \mathbb Q -(pr_2)_*c_*(F \times B)\otimes \mathbb Q}{2}(1-y) \\
& \hspace{7cm} + \frac{f_*L_*^H(E) -(pr_2)_*L_*(F\times B)}{2}(1+y) \\
& \hspace{8cm} \op{mod} \Bigl (H_{*}(B) \otimes \mathbb Q[y] \Bigr)(1-y^2).
\end{align*}
By (\ref{proj}), we have that $(pr_2)_*T_{y*}(F\times B) = \chi_y(F)T_{y*}(B)$, so in particular, $(pr_2)_*c_*(F \times B)=\chi(F)c_*(B)$ and $(pr_2)_*L^H_*(F\times B) = \sigma^H(F)L_*^H(B).$
Furthermore, it is well-known that $f_*c_*(E) = \chi(F)c_*(B)$. In fact, 
\begin{align*}
f_*c_*(E) & = f_*c_*(\jeden_E) \\
& = c_*(f_*\jeden_E) \\
& = c_*(\chi(F)\jeden_B) \\
& = \chi(F) c_*(\jeden_B) \\
& = \chi(F)c_*(B).
\end{align*}
Therefore we get the congruence (1). The congruence (2) is obtained in a similar manner.
\end{proof}
\begin{rem} An alternative  proof of Theorem \ref{thm5}, without using the cross-product formula can be given as follows:

(1) Since  

$\displaystyle \chi_y(F) \equiv \frac{\chi(F)}{2} (1-y)  + \frac{\sigma^H(F)}{2} (1+y) \,\, \op{mod} \, 1-y^2,$

$\displaystyle T_{y*}(B) \equiv \frac{c_*(B)\otimes \mathbb Q}{2} (1-y) + \frac{L_*^H(B)}{2}(1+y) \hspace{0.5cm}  \op{mod} \Bigl (H_{*}(B) \otimes \mathbb Q[y] \Bigr)(1-y^2),$

\noindent we have, $\op{mod} \Bigl (H_{*}(B) \otimes \mathbb Q[y] \Bigr)(1-y^2)$, that  
\begin{align*}
\chi_y(F)T_{y*}(B) & \equiv \left(\frac{\chi(F)}{2} (1-y)  +  \frac{\sigma^H(F)}{2} (1+y) \right) \left( \frac{c_*(B)\otimes \mathbb Q}{2} (1-y) +\frac{L_*^H(B)}{2} (1+y) \right)\\
& = \frac{\chi(F)c_*(B)\otimes \mathbb Q}{4}(1-y)^2 + \frac{\chi(F)L_*^H(B)}{4}(1-y^2) \\
& \hspace{3.5cm} + \frac{\sigma^H(F)c_*(B)\otimes \mathbb Q}{4}(1-y^2)  + \frac{\sigma^H(F)L_*^H(B)}{4}(1+y)^2. 
\end{align*}
Since $(1-y)^2 = 1 - 2y + y^2 \equiv 2(1-y) \,\op{mod} \, 1-y^2$ and $(1+y)^2 = 1 + 2y + y^2 \equiv 2(1+y) \, \op{mod} \, 1-y^2$, the above congruence becomes the following: $\op{mod} \Bigl (H_{*}(B) \otimes \mathbb Q[y] \Bigr)(1-y^2)$, 
$$\chi_y(F)T_{y*}(B) \equiv  \frac{\chi(F)c_*(B)\otimes \mathbb Q}{2}(1-y) +\frac{\sigma^H(F)L_*^H(B)}{2}(1+y).$$
Therefore we have
\begin{align*}
& f_*T_{y*}(E) - \chi_y(F)T_{y*}(B) \equiv  \frac{f_*c_*(E)\otimes \mathbb Q- \chi(F)c_*(B)\otimes \mathbb Q}{2}(1-y) \\
&  \hspace{3cm} +\frac{f_*L_*^H(E) -\sigma^H(F)L_*^H(B)}{2}(1+y) 
\,\, \,  \op{mod} \Bigl (H_{*}(B) \otimes \mathbb Q[y] \Bigr)(1-y^2).
\end{align*}
Hence, by using $f_*c_*(E) = \chi(F)c_*(B)$ we get the congruence formula (1) of Theorem \ref{thm5}.

(2)  Similarly, by Lemma \ref{lem2}, we can compute $\chi_y(F)T_{y*}(B)$ $\op{mod} \Bigl (H_{*}(B) \otimes \mathbb Q[y] \Bigr)(y-y^3)$ as follows:
\begin{align*}
& \chi_y(F)T_{y*}(B) \equiv \\
& \left(\frac{\chi(F)}{2}(y^2 -y) + \tau^H(F)(1-y^2) + \frac{\sigma^H(F)}{2}(y+y^2)\right) \\
& \hspace{2cm} \times \left( \frac{c_*(B)\otimes \mathbb Q}{2}(y^2 -y) + td_*^H(B)(1-y^2) + \frac{L_*^H(B)}{2}(y+y^2) \right) \\
& \equiv \frac{\chi(F)c_*(B)\otimes \mathbb Q}{4}(y^2 -y)^2 + \frac{\chi(F)}{2}td_*^H(B)(y^2-y)(1-y^2)   + \frac{\chi(F)L_*^H(B)}{4}(y^2-y)(y+y^2) \\
& + \frac{\tau^H(F)c_*(B)\otimes \mathbb Q}{2}(1-y^2)(y^2 -y)  +  \tau^H(F)td_*^H(B)(1-y^2)^2  + \frac{\tau^H(F)L_*^H(B)}{2}(1-y^2)(y+y^2) \\
&  + \frac{\sigma^H(F) c_*(B)\otimes \mathbb Q}{4}(y+y^2)(y^2 -y) +\frac{\sigma^H(F)}{2}td_*^H(B)(y+y^2)(1-y^2) + \frac{\sigma^H(F)L_*^H(B)}{4}(y+y^2)^2.
\end{align*}
Since $(1-y^2)(y^2-y)=(1-y^2)y(y-1)\equiv 0 \op{mod} y-y^3$, $(y^2-y)(y+y^2)=y(y-1)y(1+y)=y^2(1-y^2) \equiv 0 \op{mod} y-y^3$ and $(1-y^2)(y+y^2)=(1-y^2)y(1+y)\equiv 0 \op{mod} y-y^3$, we obtain the following congruence:
\begin{align*}
\chi_y(F)T_{y*}(B) \equiv \frac{\chi(F)c_*(B)\otimes \mathbb Q}{4}(y^2 -y)^2 + \tau^H(F)td_*^H(B)(1-y^2)^2 + \frac{\sigma^H(F)L_*^H(B)}{4}(y+y^2)^2. \hspace{2cm}
\end{align*}
Since $(y^2-y)^2 \equiv 2(y^2-y) \op{mod}\, y-y^3$ , $(1-y^2)^2 \equiv 1-y^2 \op{mod}\, y-y^3$ and $(y^2+y)^2 \equiv 2(y^2+y) \op{mod}\,y-y^3$, 
the above congruence becomes
$$\chi_y(F)T_{y*}(B) \equiv \frac{\chi(F)c_*(B)\otimes \mathbb Q}{2}(y^2 -y)  + \tau^H(F)td_*^H(B)(1-y^2) + \frac{\sigma^H(F)L_*^H(B)}{2}(y+y^2).$$
Hence, we have
\begin{align*}
& f_*T_{y*}(E) - \chi_y(F)T_{y*}(B) \equiv \frac{f_*c_*(E)\otimes \mathbb Q - \chi(F)c_*(B)\otimes \mathbb Q }{2}(y^2 -y)
 \\
& \hspace{3cm} + \left(f_*td_*^H(E) - \tau^H(F)td_*^H(B) \right) (1-y^2) + \frac{f_*L_*^H(E) -\sigma^H(F)L_*^H(B)}{2}(y+y^2). 
\end{align*}
Since $f_*c_*(E)=\chi(F)c_*(B)$, we get
\begin{align*}
f_*T_{y*}(E) -\chi_y(F)T_{y*}(B) & \equiv \Bigl (f_*td_*^H(E) -\tau^H(F)td_*^H(B) \Bigr)(1-y^2) \\
& \hspace{2cm}+ \frac{f_*L_*^H(E)-\sigma^H(F)L_*^H(B)}{2}(y+ y^2) \\
& \hspace{4cm}\op{mod} \Bigl (H_{*}(B) \otimes \mathbb Q[y] \Bigr)(y-y^3).
\end{align*}
\end{rem}

In the rational homology class results of Theorem  \ref{thm5} one can only specialize the parameter $y$ to $y=\pm 1$ in (1) resp. $y=0,\pm 1$ in (2),
since these are the only zeros of $1-y^2$, resp., $y-y^3$ over $\bQ$. But the corresponding degree formulae live in $\bZ[y]/(1-y^2)$, resp., $\bZ[y]/(y-y^3)$, in which case other specializations are also possible, e.g., $y^2-1=4m(m+1)$ is divisible by $8$ for $y=2m+1$ an odd integer, or $y^3-y=(y-1)y(y+1)$ is divisible by $6$ for any integer $y$.

Thus, if we consider concrete integers $y$, and look only at the degree formulae of Theorem \ref{thm5}, we have the following:
\begin{cor} Under the notations of Theorem \ref{thm5}, we have:
\begin{enumerate}
\item For any odd integer $y$ we have 
$$\chi_y(E) \equiv \chi_y(F)\chi_y(B) \, \, \op{mod} 2.$$
\item If $y \equiv 3 \, \,\op{mod} 4$, then 
$$\chi_y(E) \equiv \chi_y(F)\chi_y(B) \, \, \op{mod} 4.$$
\item If $y \equiv 1 \, \,\op{mod} 4$, then 
$$\chi_y(E) \equiv \chi_y(F)\chi_y(B) \, \, \op{mod} 4 \Longleftrightarrow \sigma^H(E) \equiv \sigma^H(F)\sigma^H(B) \, \, \op{mod} \, \, 4.$$
\end{enumerate}
\end{cor}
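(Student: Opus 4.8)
The plan is to deduce all three congruences directly from the degree version of formula (1) in Theorem~\ref{thm5},
$$\chi_y(E) - \chi_y(F)\chi_y(B) \equiv \frac{\sigma^H(E) - \sigma^H(F)\sigma^H(B)}{2}\,(1+y) \quad \op{mod}\ 1-y^2,$$
by upgrading it to an honest congruence in $\bZ[y]$ and then specializing $y$ to a concrete odd integer. Throughout I would write $D(y):=\chi_y(E)-\chi_y(F)\chi_y(B)\in\bZ[y]$ and $S:=\sigma^H(E)-\sigma^H(F)\sigma^H(B)\in\bZ$, so that the task becomes understanding the integer $D(y_0)$ modulo $2$ or $4$ for integer $y_0$.

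The key preliminary step is to check that $S$ is even, so that the right-hand side $\tfrac{S}{2}(1+y)$ already lies in $\bZ[y]$. For this I would use the parity relation $\sigma^H(X)\equiv\chi(X)\ \op{mod}\ 2$, which is immediate from~(\ref{formula1b}), together with the equality $\chi(E)=\chi(F)\chi(B)$ (obtained by taking degrees in the identity $f_*c_*(E)=\chi(F)c_*(B)$ established in the proof of Theorem~\ref{thm5}, and exactly the reason the Euler characteristic term is absent from formula~(1)). Combining these gives $\sigma^H(E)\equiv\chi(E)=\chi(F)\chi(B)\equiv\sigma^H(F)\sigma^H(B)\ \op{mod}\ 2$, hence $S\equiv 0\ \op{mod}\ 2$. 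With $\tfrac{S}{2}\in\bZ$, dividing the integral polynomial $D(y)-\tfrac{S}{2}(1+y)$ by the monic polynomial $y^2-1$ shows that formula~(1) holds in $\bZ[y]$, i.e. $D(y)=\tfrac{S}{2}(1+y)+(1-y^2)\,Q(y)$ for some $Q\in\bZ[y]$.

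Now I would specialize. For any odd integer $y_0=2m+1$ one has $1-y_0^2=-4m(m+1)\equiv 0\ \op{mod}\ 8$, so evaluating the previous identity yields the single master congruence $D(y_0)\equiv\tfrac{S}{2}(1+y_0)\ \op{mod}\ 8$. The three assertions then follow by reading off the $2$-adic valuation of $1+y_0$: if $y_0$ is odd then $1+y_0$ is even and $\tfrac{S}{2}(1+y_0)\equiv 0\ \op{mod}\ 2$, giving part~(1); if $y_0\equiv 3\ \op{mod}\ 4$ then $4\mid 1+y_0$, so $\tfrac{S}{2}(1+y_0)\equiv 0\ \op{mod}\ 4$, giving part~(2); and if $y_0\equiv 1\ \op{mod}\ 4$ then $1+y_0\equiv 2\ \op{mod}\ 4$, whence $\tfrac{S}{2}(1+y_0)\equiv S\ \op{mod}\ 4$ (using again that $S$ is even), so $D(y_0)\equiv S\ \op{mod}\ 4$ and part~(3) is the resulting equivalence $D(y_0)\equiv 0 \Leftrightarrow S\equiv 0\ \op{mod}\ 4$.

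The only non-formal ingredient is the integrality step, i.e. the evenness of $S$, which is what allows the $\bQ[y]$-valued congruence of Theorem~\ref{thm5} to descend to a congruence of integer polynomials; once that is in place, the remainder is elementary $2$-adic arithmetic on $1-y_0^2$ and $1+y_0$. I expect this descent, together with the bookkeeping that $1-y_0^2$ is divisible by $8$ (not merely by $4$), which is what powers the sharper moduli in parts~(2) and~(3), to be the main point to get right. Note that formula~(2) of Theorem~\ref{thm5} is not needed for this corollary.
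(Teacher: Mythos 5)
Your proof is correct and takes essentially the same route as the paper's: both first establish that $S=\sigma^H(E)-\sigma^H(F)\sigma^H(B)$ is even using the parity identity (\ref{formula1b}) together with $\chi(E)=\chi(F)\chi(B)$, and then deduce the three parts from the $2$-adic divisibility of $1+y$ and $1-y^2$ for odd integers $y$. Your explicit descent of the congruence from $\bQ[y]$ to an identity $D(y)=\tfrac{S}{2}(1+y)+(1-y^2)Q(y)$ in $\bZ[y]$ (via division by the monic $y^2-1$) makes rigorous a step the paper leaves implicit in its remark that the degree formulae live in $\bZ[y]/(1-y^2)$, but it is a sharpening of the same argument rather than a different one.
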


\begin{proof} It suffices to observe that $\sigma^H(E) -\sigma^H(F)\sigma^H(B)$ is an even number, i.e., 
$$\sigma^H(E) \equiv \sigma^H(F)\sigma^H(B) \, \, \op{mod} \, 2.$$
This follows from (\ref{formula1b}). Indeed,
$\sigma^H(E) + \chi(E) \equiv 0 \, \op{mod} \, 2$ and $\sigma^H(F \times B) + \chi(F \times B) \equiv 0 \, \op{mod} \, 2$ imply that
$\sigma^H(E) + \chi(E) - (\sigma^H(F \times B) + \chi(F \times B)) \equiv 0 \, \op{mod} \, 2$. Since  $\sigma^H(F \times B) = \sigma^H(F) \sigma^H(B)$ and $\chi(E) = \chi(F)\chi(B) = \chi(F \times B)$, we get $\sigma^H(E) - \sigma^H(F) \sigma^H(B) \equiv 0 \, \op{mod} \, 2$.  Thus we have $\sigma^H(E) \equiv \sigma^H(F)\sigma^H(B) \, \, \op{mod} \, 2.$ So, if $y$ is an odd integer, then $1+y$ is an even number and $1-y^2$ is also an even numer (in fact, it is divisible by $8$). If $y \equiv 3 \, \,\op{mod} 4$, i.e., $y=4k+3$, then $1+y =4k+4$ is divisible by $4$ and $1-y^2$ is also divisible by $4$ (since it is divisible by $8$). In a similar way we get (3). Thus we get the above results.

\end{proof} 
\begin{rem} If $F, E, B$ are smooth, then in the above Corollary one can replace $\op{mod} 2$ in (1) and $\op{mod} 4$ in (2) and (3)  by $\op{mod} 4$  and $\op{mod} 8$, respectively, as recalled in Theorem \ref{mod8} of the Introduction. The reason is the use of the duality result of Theorem \ref{duality},
which for the $\chi_y$-genus of singular varieties is not available. But, as we will see in \S6 (Theorem \ref{ihduality}), a corresponding duality result can be formulated in the singular setting by using intersection (co)homology.

\end{rem}

\begin{rem} If $y$ is an even integer, we can use the degree congruence in Theorem \ref{thm5} (2) to get the following: if $y=2m$, then
$$\chi_{2m}(E) \equiv \chi_{2m}(F)\chi_{2m}(B) \, \op{mod} \, 2m \Longleftrightarrow \tau^H(E) \equiv \tau^H(F)\tau^H(B) \, \op{mod} \, 2m.$$
\end{rem}

We also mention here the following consequence of the degree formula in Theorem \ref{thm5} (2):
\begin{cor}
 Under the assumption of Theorem \ref{thm5}, we have for $B$ (and therefore also $E$) compact, and $y \equiv\; 2$ or $3 \op{mod} 6$: 
 $$\chi_y(E) - \chi_y(F)\chi_{y}(B) \equiv \;\left( \tau^H(E)-\tau^H(F)\tau^H(B) \right) (1-y^2) \op{mod} 6  \:.$$
\end{cor}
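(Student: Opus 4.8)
The plan is to start from the degree formula in Theorem \ref{thm5} (2), which asserts that, for $B$ (and hence $E$) compact,
\begin{equation*}
\chi_y(E) -\chi_y(F)\chi_y(B) \equiv \Bigl (\tau^H(E) -\tau^H(F)\tau^H(B) \Bigr)(1-y^2)+ \frac{\sigma^H(E)-\sigma^H(F)\sigma^H(B)}{2}(y^2+y) \,\, \op{mod} \, \, y-y^3.
\end{equation*}
This congruence is an identity in $\bZ[y]/(y-y^3)$, but since we are now plugging in a concrete integer $y$, I may reduce both sides as integers modulo $y^3-y=(y-1)y(y+1)$. The key arithmetic observation, exactly as in the proof of the preceding Corollary, is that $\sigma^H(E)-\sigma^H(F)\sigma^H(B)$ is an even integer; this was deduced there from (\ref{formula1b}) together with the multiplicativity $\sigma^H(F\times B)=\sigma^H(F)\sigma^H(B)$ and $\chi(E)=\chi(F)\chi(B)=\chi(F\times B)$. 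So I would write $\sigma^H(E)-\sigma^H(F)\sigma^H(B)=2s$ for some integer $s$, which clears the denominator and turns the signature term into $s\,(y^2+y)$ with integer coefficient.

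Next I would do the elementary number theory on the term $s(y^2+y)$ and on the modulus. The hypothesis is $y\equiv 2$ or $3\pmod 6$, and the claim is a congruence modulo $6$. The plan is to show that under this hypothesis $y^2+y=y(y+1)$ is divisible by $6$, so that the entire signature contribution $s(y^2+y)$ vanishes modulo $6$ regardless of the value of $s$. Indeed, $y(y+1)$ is always even (consecutive integers), so divisibility by $6$ reduces to divisibility by $3$. If $y\equiv 3\pmod 6$ then $3\mid y$; if $y\equiv 2\pmod 6$ then $y+1\equiv 0\pmod 3$, so $3\mid (y+1)$. In either case $3\mid y(y+1)$ and hence $6\mid y(y+1)$, killing the middle term mod $6$.

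Finally I would account for the ambient modulus. The congruence from Theorem \ref{thm5} (2) holds modulo $y^3-y=(y-1)y(y+1)$; to pass to a statement modulo $6$ I need $6\mid y^3-y$, which is the classical fact that $(y-1)y(y+1)$, a product of three consecutive integers, is always divisible by $6$. Thus reducing the Theorem \ref{thm5} (2) congruence modulo $6$ is legitimate, and after discarding the signature term we are left with
\begin{equation*}
\chi_y(E) -\chi_y(F)\chi_y(B) \equiv \Bigl (\tau^H(E)-\tau^H(F)\tau^H(B)\Bigr)(1-y^2) \,\,\op{mod} \, 6,
\end{equation*}
as desired. The argument is almost entirely routine arithmetic; the only point requiring a little care — and the step I expect to be the ``main obstacle'' in the sense of being easy to state wrongly — is the bookkeeping that the coefficient $\tfrac{1}{2}(\sigma^H(E)-\sigma^H(F)\sigma^H(B))$ is genuinely an integer before one multiplies by $y^2+y$, so that the factor $y(y+1)\equiv 0\pmod 6$ actually suppresses the whole term rather than merely a rational multiple of it. Once the parity of the signature difference and the two divisibility-by-$6$ facts (for $y^3-y$ and for $y(y+1)$ under the stated residue conditions) are in hand, the conclusion is immediate.
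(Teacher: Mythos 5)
Your proof is correct and follows exactly the route the paper intends: the corollary is stated there as a direct consequence of the degree formula in Theorem \ref{thm5} (2), combined with the observation (made just before, and in the proof of the preceding corollary) that $6$ divides $y^3-y$ for any integer $y$ and that $\sigma^H(E)-\sigma^H(F)\sigma^H(B)$ is even. Your additional check that $6 \mid y(y+1)$ precisely when $y\equiv 2$ or $3 \pmod 6$, and your emphasis that the evenness of the signature difference is what makes the factor $y(y+1)$ kill the whole middle term, fill in the details the paper leaves implicit.
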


In general, the Hodge--Todd genus $\tau^H(-)$ is not multiplicative for such maps $f: E\to B$, so that the right had side in the above congruence identity does not necessarily vanish. But in the following example, this is indeed the case. 

\begin{ex} Assume that $f: E\to B$ is a proper submersion of algebraic varieties,
with the compact algebraic rational manifold $F$ isomorphic to a fiber of $f$ over every connected component of $B$. 
Since $f$ is a proper submersion, all coherent higher image sheaves 
$R^if_*\mathcal{O}_E$ are locally free and commute with base change (compare, e.g., with   \cite[Section 3.A]{Sch}). But $H^0(F,\mathcal{O})=\bC$ and
$\dim H^i(F,\mathcal{O})=\dim H^0(F, \Omega^i) =0$ for $i>0$, since $F$ is rational,
i.e., connected and birational to some projective space. So $f_*\mathcal{O}_E\simeq \mathcal{O}_B$ and $R^if_*\mathcal{O}_E = 0$  for $i>0$.
By \cite[Corollary 5.12]{Sch}, this implies that $f_*td_*^H(E)=td^H_*(B)$, and $\tau_*^H(E)=\tau^H_*(B)$ together with $\tau^H(F)=1$.
\end{ex}

\section{Intersection homology flavored congruence identities}

As shown in \cite{BSY1,Sch}, the motivic Hirzebruch class transformation $T_{y*}:K_0(\mathcal V /X) \to \bQ[y]\subset \bQ[y^{\pm 1}]$ factors through the Grothendieck group $K_0(\mh(X))$ of (algebraic) mixed Hodge modules on $X$ \cite{Saito}, by a transformation 
$$ MHT_{y*}:K_0(\mh(X)) \to H_{*}(X) \otimes \bQ[y^{\pm 1}],$$
together with the natural group homomorphism
$$
\chi_{\rm Hdg}: K_0(\mathcal V /X) \to K_0(\mh(X)) \ , \ [f:Y \to X] \mapsto [f_! \bQ_Y] \ ,
$$
where $\bQ_Y$ is considered here as the constant mixed Hodge module complex on $Y$. So the motivic Hirzebruch class ${T_y}_*(X)$ can also be defined as $MHT_{y*}([\bQ_X])$. 

One advantage of using mixed Hodge modules is that we can evaluate the transformation $MHT_{y*}$ on other interesting ``coefficients''. For example, the (shifted) intersection cohomology Hodge module $IC'_X:=IC_X[-\dim (X)]$ yields, for $X$ pure-dimensional, similar \emph{intersection Hirzebruch classes} $$IT_{y*}(X):=MHT_{y*}([IC'_X]) \in H_{*}(X) \otimes \bQ[y].$$ 
The fact that no negative powers of $y$ appear in $IT_{y*}(X)$ follows from \cite[Example 5.2]{Sch}, where it is also shown that the highest power of $y$ appearing in $IT_{y*}(X)$ is at most $\dim_{\bC}(X)$. In this section, we discuss congruence formulae for the classes ${IT_y}_*(X)$, as well as congruences for the corresponding degree generalizing Theorem \ref{mod8}.

\medskip

The actual definition of $MHT_{y*}$ is not needed here. We list only those formal properties needed for obtaining congruence identities.

The transformation $MHT_{y*}$ is functorial for proper morphisms. In particular, if $X$ is compact and pure-dimensional, then the degree-zero component of $IT_{y*}(X)$ is yet another extension of the Hirzebruch $\chi_y$-genus to the singular context, namely the \emph{intersection Hodge polynomial} $I\chi_y(X)$ defined by making use of Saito's (mixed) Hodge structures on the intersection cohomology groups $IH^i(X,\bQ):=\mathbb{H}^i(X,IC'_X)$:
$$I\chi_y(X):=\sum_{i, p \geq 0} (-1)^i \op{dim}_{\mathbb C} Gr^p_{F} \left( IH^i (X, \mathbb C) \right) (-y)^p \in \bZ[y].$$
Note that since the highest power of $y$ appearing in $IT_{y*}(X)$ is $\dim_{\bC}(X)$, it  follows that $I\chi_y(X)$ is a polynomial of degree at most $\dim_{\bC}(X)$. Set 
$$I\chi_y(X)=:\sum_{p=0}^{\dim_{\bC}(X)} I\chi^p(X)y^p.$$
With this notation we have the following duality result generalizing Theorem \ref{duality} to the singular context:
\begin{thm}\label{ihduality}
Assume $X$ is a compact complex algebraic variety of pure dimension $n$. Then
$$I\chi_y(X)=(-y)^n\cdot I\chi_{\frac{1}{y}}(X), \quad i.e., \quad I\chi^p(X)=(-1)^n\cdot I\chi^{n-p}(X)\:.$$
\end{thm}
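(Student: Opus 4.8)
The plan is to deduce the duality identity for the intersection Hodge polynomial $I\chi_y(X)$ from the self-duality of the intersection cohomology mixed Hodge structure, which is the singular analogue of the Serre-duality argument underlying Theorem \ref{duality}. The key input is Saito's theory: for $X$ a compact complex algebraic variety of pure dimension $n$, the shifted intersection cohomology Hodge module $IC'_X=IC_X[-n]$ carries a polarization, and hence the intersection cohomology groups $IH^i(X,\bQ)=\mathbb{H}^i(X,IC'_X)$ underlie a pure Hodge structure that is Poincar\'e--Verdier self-dual. Concretely, the duality $\mathbb{D}\,IC_X\simeq IC_X$ (as the intersection complex is self-dual up to the appropriate Tate twist) induces, on hypercohomology, a perfect pairing $IH^i(X)\otimes IH^{2n-i}(X)\to \bQ(-n)$ compatible with the Hodge and weight filtrations. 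I would state this self-duality precisely as the first step.

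Next I would translate the Hodge-theoretic self-duality into a statement about the graded pieces $Gr^p_F IH^i(X,\bC)$. The pairing being a morphism of Hodge structures of weight $2n$ (type $(n,n)$) forces a Hodge-symmetry relation. First, from the self-dual pairing one reads off the isomorphism $Gr^p_F IH^i(X,\bC)\cong \left(Gr^{n-p}_F IH^{2n-i}(X,\bC)\right)^\vee$, so that $\dim_{\bC} Gr^p_F IH^i(X,\bC)=\dim_{\bC} Gr^{n-p}_F IH^{2n-i}(X,\bC)$. I would insert this dimension equality into the defining sum
$$I\chi_y(X)=\sum_{i,p\geq 0}(-1)^i\dim_{\bC}Gr^p_F\!\left(IH^i(X,\bC)\right)(-y)^p,$$
and perform the double substitution $i\mapsto 2n-i$, $p\mapsto n-p$. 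The sign $(-1)^i$ becomes $(-1)^{2n-i}=(-1)^i$, which is why the Euler-characteristic structure survives, while $(-y)^p$ becomes $(-y)^{n-p}=(-y)^n(-y)^{-p}$. Factoring out $(-y)^n$ leaves precisely $\sum (-1)^i \dim_{\bC}Gr^p_F(IH^i)\,(-y)^{-p}=I\chi_{1/y}(X)$, yielding the asserted identity $I\chi_y(X)=(-y)^n\,I\chi_{1/y}(X)$. Comparing coefficients of $y^p$ on both sides then gives the pointwise form $I\chi^p(X)=(-1)^n\,I\chi^{n-p}(X)$.

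The main obstacle, and the point that needs the most care, is the precise Hodge-theoretic self-duality of $IH^*(X)$ for a \emph{singular} compact variety $X$. In the smooth case Theorem \ref{duality} follows from Serre duality on coherent cohomology of $\Lambda^p T^*X$; here one cannot use $\Lambda^p T^*X$, so the entire argument must be routed through the polarization of the pure Hodge structure on intersection cohomology supplied by Saito's theory of mixed Hodge modules, together with the self-duality $\mathbb{D}(IC'_X)\simeq IC'_X$ up to the appropriate shift and Tate twist. I would therefore cite Saito's construction \cite{Saito} (as already used in \S3 and at the start of \S6) for the existence and compatibility of these structures, and take care that the duality isomorphism matches Hodge types correctly, so that $F^p$ pairs perfectly with $F^{n-p+1}$ and the weight is exactly $2n$. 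Once this self-duality is in hand, the passage to the polynomial identity is the routine index substitution sketched above, entirely parallel to the classical derivation of Theorem \ref{duality}.
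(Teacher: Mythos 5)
Your proof is correct, but it takes a genuinely different route from the paper's. Both arguments start from the same geometric input, the self-duality $\mathbb{D}(IC'_X)\simeq IC'_X[2n](n)$ of the shifted intersection complex in Saito's theory, but the paper never passes to the intersection cohomology groups themselves: it computes $\deg\bigl(MHT_{y*}(\mathbb{D}(IC'_X))\bigr)$ in two ways --- once using the effect of the shift $[2n]$ and Tate twist $(n)$ on the transformation $MHT_{y*}$, which gives $I\chi_y(X)\cdot(-y)^{-n}$, and once using Sch\"urmann's transformation-level duality theorem \cite[Corollary 5.19]{Sch}, which gives $I\chi_{\frac{1}{y}}(X)$ --- and equates the two answers. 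You instead take hypercohomology, obtain the perfect pairing of Hodge structures $IH^i(X)\otimes IH^{2n-i}(X)\to\bQ(-n)$, extract the dimension equality $\dim_{\bC}Gr^p_F IH^i(X,\bC)=\dim_{\bC}Gr^{n-p}_F IH^{2n-i}(X,\bC)$, and finish with the index substitution $i\mapsto 2n-i$, $p\mapsto n-p$; this is precisely the singular analogue of the Serre-duality proof of Theorem \ref{duality}. What the paper's route buys is brevity and the fact that duality is handled at the level of characteristic classes (so the argument is a purely formal bookkeeping of shifts and twists, with the hard content outsourced to \cite{Sch}); what your route buys is self-containedness and transparency, since you need only the induced duality of (mixed) Hodge structures on $IH^*$ rather than the class-level theorem as a black box, and the mechanism producing $(-y)^n$ is visible. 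Two small points of care: the precise statement is that $F^pIH^i$ annihilates $F^{n-p+1}IH^{2n-i}$ and the induced pairing $Gr^p_F IH^i\otimes Gr^{n-p}_F IH^{2n-i}\to\bC$ is perfect (by strictness of morphisms of mixed Hodge structures), which is what your loose phrase ``$F^p$ pairs perfectly with $F^{n-p+1}$'' should mean; and purity/polarizability of $IH^*(X)$, while true for compact $X$ by Saito's theory, is not actually needed --- the duality isomorphism of mixed Hodge structures $\bigl(IH^i(X)\bigr)^{\vee}\simeq IH^{2n-i}(X)(n)$ alone yields the $Gr_F$ dimension equality.
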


\begin{proof}
Let $\mathbb{D}$ denote the duality functor on mixed Hodge module complexes. Then
$\mathbb{D}(IC'_X)=IC'_X[2n](n)$, so $deg(MHT_{y*}(\mathbb{D}(IC'_X))) = I\chi_y(X)\cdot (-y)^{-n}$. On the other hand, by the duality result of \cite[Corollary 5.19]{Sch}, one also gets that $deg(MHT_{y*}(\mathbb{D}(IC'_X))) = I\chi_{\frac{1}{y}}(X)$.
\end{proof}

\begin{rem}\label{rhm}
Note that if $X$ is a rational homology manifold (e.g., $X$ is smooth or only has quotient singularities), then $IC'_X=\bQ_X$ as (shifted) mixed Hodge modules.  
So in this case one gets that $I\chi_y(X)=\chi_y(X)$ for $X$ compact. In particular, it follows from the above result that Theorem \ref{duality} holds for a compact complex algebraic variety of pure dimension $n$ which, moreover, is a rational homology manifold.
\end{rem}

The transformation $MHT_{y*}$ also commutes with external products, so the cross-product formula (\ref{cp}) also holds for $IT_{y*}$, namely,
$$IT_{y*}(X \times Y) = IT_{y*}(X) \times IT_{y*}(Y).$$
Moreover, the intersection homology variants of Theorems \ref{CMS} and \ref{CMS2} from the Introduction were proven in \cite[Corollary 4.11]{CMS}.

By \cite[Proposition 5.21]{Sch}, we have that  
$$IT_{-1*}(X)=c_*(ic_X)=:Ic_*(X) \in H_*(X) \otimes \bQ,$$
for $ic_X$ the constructible function on $X$ defined by taking stalkwise the Euler characteristic of the constructible sheaf complex $IC'_X$. In particular, at the degree level,
$I\chi_{-1}(X)=I\chi(X)$ is the intersection cohomology Euler characteristic.
Moreover, if $f:E \to B$ is a complex algebraic fiber bundle with fiber variety $F$, then we have by \cite[Proposition 3.6]{CMS0} that
$$f_* Ic_*(E)=I\chi(F) \cdot Ic_*(B).$$

Furthermore, it is conjectured in \cite[Remark 5.4]{BSY1} that $IT_{1*}(X)=L_*(X)$ is the homology $L$-class of Goresky--MacPherson. In the case when $X$ is pure-dimensional and projective, 
the identification of these homology classes holds at degree level by Saito's mixed Hodge module theory, i.e., one has in this case that \begin{equation}\label{ichisig} I\chi_1(X)=\sigma(X)\end{equation} is the Goresky--MacPherson intersection cohomology signature (see \cite[Section 3.6]{MSS}). 

\begin{rem}\label{odd}
Note that if $\dim_{\bC}(X)$ is odd, then $\sigma(X)=0$ by definition (since the pairing on middle dimensional interesction cohomology is anti-symmetric), whereas in this case the fact that $I\chi_1(X)=0$ follows from the duality result of Theorem \ref{ihduality}.
\end{rem}

\medskip

We therefore have all the ingredients to formally extend \emph{all} the results of the previous sections to intersection Hirzebruch classes. Let us only formulate here the following result needed in the proof of Theorem \ref{ihmod8} from the introduction:
\begin{thm}\label{thm6} Let $f:E \to B$ be a complex algebraic fiber bundle with fiber $F$, so that $E, B, F$ are pure-dimensional. Then we have: 
\begin{align*}
 (1) \, \, f_*IT_{y*}(E) -& I\chi_y(F)IT_{y*}(B) \\
& \equiv \frac{f_*IT_{1*}(E) -\sigma(F)IT_{1*}(B)}{2} (1+y)  \,\, \,\, \,\,\op{mod} \Bigl (H_{*}(B) \otimes \mathbb Q[y] \Bigr)(1-y^2).
\end{align*}
If, moreover,  $E, B, F$ are projective, then  by taking degrees we have:
$$I\chi_y(E) -I\chi_y(F)I\chi_y(B) \equiv \frac{\sigma(E)-\sigma(F)\sigma(B)}{2}(1+y) \,\, \op{mod} \, \, 1-y^2,$$
where $\sigma(-)$ denotes the Goresky-MacPherson intersection cohomology signature.
\begin{align*}
(2) \, \, f_*IT_{y*}(E) -I\chi_y(F)IT_{y*}(B) & \equiv \Bigl (f_*IT_{0*}(E) -I\chi_0(F)IT_{0*}(B) \Bigr)(1-y^2) \\
& \hspace{1cm}+ \frac{f_*IT_{1*}(E)-\sigma(F)IT_{1*}(B)}{2}(y+ y^2) \\
& \hspace{4cm}\op{mod} \Bigl (H_{*}(B) \otimes \mathbb Q[y] \Bigr)(y-y^3).
\end{align*}
If, moreover,  $E, B, F$ are projective, at degree level we have
\begin{align*}
I\chi_y(E) -I\chi_y(F)I\chi_y(B) & \equiv \Bigl (I\chi_0(E) -I\chi_0(F) I\chi_0(B) \Bigr)(1-y^2)\\
& \hspace{2cm} + \frac{\sigma(E)-\sigma(F)\sigma(B)}{2}(y^2+y) \,\, \op{mod} \, \, y-y^3.
\end{align*}
\end{thm}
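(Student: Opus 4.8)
The plan is to carry the proof of Theorem \ref{thm5} over \emph{verbatim}, replacing the quartet $\bigl(T_{y*},\,c_*(-)\otimes\bQ,\,td_*^H,\,L_*^H\bigr)$ by its intersection-theoretic counterpart $\bigl(IT_{y*},\,Ic_*,\,IT_{0*},\,IT_{1*}\bigr)$ and the scalar invariants $\bigl(\chi_y,\chi,\tau^H,\sigma^H\bigr)$ by $\bigl(I\chi_y,I\chi,I\chi_0,I\chi_1\bigr)$. The first step is purely formal. As recalled above, $IT_{y*}(X)=\sum_i IT^i_*(X)\,y^i$ is an honest polynomial in $y$ with homology coefficients (no negative powers of $y$ occur), and specializing $y=-1,0,1$ gives $IT_{-1*}(X)=Ic_*(X)$, $IT_{0*}(X)=IT^0_*(X)$ and $IT_{1*}(X)=\sum_i IT^i_*(X)$ respectively. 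Hence the identical bookkeeping used to prove Lemma \ref{lem1} and Lemma \ref{lem2} yields the intersection congruences
$$IT_{y*}(X)\equiv \frac{Ic_*(X)}{2}(1-y)+\frac{IT_{1*}(X)}{2}(1+y)\,\,\op{mod}\Bigl(H_*(X)\otimes\bQ[y]\Bigr)(1-y^2)$$
together with the corresponding refinement modulo $(y-y^3)$ that also involves $IT_{0*}(X)$. Taking pushforwards along two morphisms and subtracting produces the intersection analogue of the cospan formula of Corollary \ref{corollary}.

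Next I would apply this cospan congruence to $E \xrightarrow{f} B \xleftarrow{pr_2} F\times B$, where $pr_2$ is the second projection with $F$ the (compact) fiber. The product term is controlled by the cross-product formula for $IT_{y*}$ stated above, which exactly as in the derivation of (\ref{proj}) gives $(pr_2)_*IT_{y*}(F\times B)=I\chi_y(F)\,IT_{y*}(B)$, using functoriality of $MHT_{y*}$ for the proper map $a_F$ and $(a_F)_*IT_{y*}(F)=I\chi_y(F)$. Specializing $y=-1$ and $y=1$ then gives $(pr_2)_*Ic_*(F\times B)=I\chi(F)\,Ic_*(B)$ and $(pr_2)_*IT_{1*}(F\times B)=I\chi_1(F)\,IT_{1*}(B)$, and likewise for the $IT_{0*}$-term.

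The one genuinely non-formal input --- and the step I expect to be the main obstacle --- is the cancellation of the intersection-Chern-class contributions, namely the identity $f_*Ic_*(E)=I\chi(F)\,Ic_*(B)$. In Theorem \ref{thm5} the analogous fact $f_*c_*(E)=\chi(F)\,c_*(B)$ followed cheaply from MacPherson's functoriality applied to the constructible function $\jeden_E$; here the function $ic_E$ attached to $IC'_E$ does \emph{not} push forward so transparently under a mere constant-Euler-characteristic hypothesis, and one genuinely needs the fiber bundle structure. This is precisely the content of \cite[Proposition 3.6]{CMS0} recalled above, which I would invoke directly. Once this cancellation is in place, the $(1-y)$-term in (1) and the $(y^2-y)$-term in (2) vanish, leaving exactly the asserted class congruences.

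Finally, for the degree statements I would take degrees under the hypothesis that $E,B,F$ are projective, so that $MHT_{y*}$ is functorial for the proper maps to a point: the degree-zero components yield $I\chi_y(E)$, $I\chi_y(F)I\chi_y(B)$ and the $I\chi_0$-terms, while the signature terms arise from the identification $I\chi_1(-)=\sigma(-)$ recorded in (\ref{ichisig}). The congruences modulo $1-y^2$ and $y-y^3$ then read off verbatim, completing the proof.
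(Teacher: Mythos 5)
Your proposal is correct and takes essentially the same route as the paper, which proves Theorem \ref{thm6} precisely by formally rerunning Lemmas \ref{lem1}--\ref{lem2}, Corollary \ref{corollary} and Theorem \ref{thm5} with $\bigl(IT_{y*},\,Ic_*,\,IT_{0*},\,IT_{1*}\bigr)$ in place of $\bigl(T_{y*},\,c_*\otimes\bQ,\,td^H_*,\,L^H_*\bigr)$, using the cross-product property of $MHT_{y*}$, the pushforward identity $f_*Ic_*(E)=I\chi(F)\,Ic_*(B)$ from \cite[Proposition 3.6]{CMS0}, and the identification $I\chi_1(-)=\sigma(-)$ for pure-dimensional projective varieties. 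You also correctly isolated the one non-formal ingredient (the $Ic_*$-pushforward requiring the genuine fiber bundle structure rather than just constant Euler characteristic of fibers), which is exactly the point the paper's discussion highlights.
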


We conclude with the proof of Theorem \ref{ihmod8} from the introduction. For reader's convenience, we recall its statement:
\begin{thm}
 Let $F\hookrightarrow E \to B$ be a complex algebraic fiber bundle of pure-dimensional complex projective algebraic varieties. Then
 \begin{enumerate}
  \item For any odd integer $y$, $I\chi_y(E)\equiv I\chi_y(F)I\chi_y(B) \op{mod} 4$.
  \item If $y \equiv\; 3 \;mod \;4$, then $I\chi_y(E)\equiv I\chi_y(F)I\chi_y(B) \op{mod} 8$.
  \item If $y \equiv\; 1 \;mod \;4$, then $I\chi_y(E)\equiv I\chi_y(F)I\chi_y(B) \op{mod} 8 \Leftrightarrow 
  \sigma(E) \equiv \sigma(F)\sigma(B) \op{mod} 8 $.
 \end{enumerate}
\end{thm}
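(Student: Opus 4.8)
The plan is to derive all three congruences from the degree-level formula of Theorem~\ref{thm6}(1),
\[
I\chi_y(E) - I\chi_y(F)I\chi_y(B) \equiv \frac{\sigma(E)-\sigma(F)\sigma(B)}{2}(1+y) \pmod{1-y^2},
\]
and to sharpen, by exactly one factor of $2$, the elementary divisibility this yields, using the intersection-cohomology duality of Theorem~\ref{ihduality}. First I would observe that the defect $S:=\sigma(E)-\sigma(F)\sigma(B)$ is even: exactly as in \eqref{formula1b} one has $\sigma(X)=I\chi_1(X)\equiv I\chi_{-1}(X)=I\chi(X)\pmod 2$, while the intersection Euler characteristic is multiplicative for fiber bundles (taking degrees in $f_*Ic_*(E)=I\chi(F)\cdot Ic_*(B)$), so $\sigma(E)\equiv I\chi(E)=I\chi(F)I\chi(B)\equiv\sigma(F)\sigma(B)\pmod 2$. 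Hence $D:=S/2\in\bZ$, and since $1-y^2$ is monic up to sign the displayed congruence lifts to a genuine identity in $\bZ[y]$:
\begin{equation*}
I\chi_y(E) - I\chi_y(F)I\chi_y(B) = D(1+y) + Q(y)(1-y^2), \qquad Q(y)\in\bZ[y]. \tag{$\star$}
\end{equation*}

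The crucial step is to promote this to $S\equiv 0\pmod 4$, i.e. to show $D$ is even; this is the intersection-homology incarnation of the Hambleton--Korzeniewski--Ranicki divisibility $\sigma(E)\equiv\sigma(F)\sigma(B)\pmod 4$. If $e:=\dim_{\bC}E=\dim_{\bC}F+\dim_{\bC}B$ is odd, then at least one of the factors is odd-dimensional, so two of the three signatures vanish by Remark~\ref{odd} and $S=0$. If $e$ is even, I would substitute $y\mapsto 1/y$ in the polynomial identity $(\star)$ and apply $I\chi_{1/y}(X)=(-1)^{\dim X}y^{-\dim X}I\chi_y(X)$ (Theorem~\ref{ihduality}) to each of $E,F,B$; because $e=\dim F+\dim B$ is even this produces a second expression for the left-hand side of $(\star)$. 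Equating the two and cancelling the common factor $1-y^2$ gives
\[
D\,(1+y+\cdots+y^{e-2}) = -\bigl(Q(1/y)\,y^{e-2}+Q(y)\bigr),
\]
and evaluating at $y=1$ yields $D(e-1)=-2Q(1)$ with $Q(1)\in\bZ$. Since $e-1$ is odd, $D$ must be even, so $S\equiv 0\pmod 4$.

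With $S\equiv 0\pmod 4$ established, the three statements follow by elementary arithmetic on $(\star)$ for a concrete odd integer $y$. For odd $y$ the number $1-y^2=-(y-1)(y+1)$ is divisible by $8$, so $Q(y)(1-y^2)\equiv 0\pmod 8$ and the difference is congruent to $\tfrac{S}{2}(1+y)$ modulo $8$. Writing $S=4t$: (1) for any odd $y$, $\tfrac S2=2t$ and $1+y$ are both even, so $\tfrac S2(1+y)\equiv 0\pmod 4$, giving $I\chi_y(E)\equiv I\chi_y(F)I\chi_y(B)\pmod 4$; (2) for $y\equiv 3\pmod 4$ one has $1+y\equiv 0\pmod 4$, so $\tfrac S2(1+y)=2t(1+y)\equiv 0\pmod 8$; (3) for $y\equiv 1\pmod 4$, writing $1+y=2u$ with $u$ odd, one gets $\tfrac S2(1+y)=4tu$, so the difference is $\equiv 4tu\pmod 8$, which vanishes iff $t$ is even, i.e. iff $S\equiv 0\pmod 8$, which by \eqref{ichisig} means exactly $\sigma(E)\equiv\sigma(F)\sigma(B)\pmod 8$.

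I expect the main obstacle to be precisely the duality step of the second paragraph: obtaining the extra factor of $2$ (equivalently $S\equiv 0\pmod 4$) is what separates these sharp mod-$4$/mod-$8$ statements from the elementary mod-$2$/mod-$4$ ones that $(\star)$ gives on its own, and it is here that Theorem~\ref{ihduality}, rather than merely the congruence formula, is indispensable. The remaining care points are routine: ensuring that the lift $(\star)$ has integer coefficients so that $Q(1)\in\bZ$, and disposing of the odd-dimensional cases via Remark~\ref{odd}.
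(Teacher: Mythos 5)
Your proof is correct, and its skeleton matches the paper's: both reduce statements (1)--(3) to the single divisibility $\sigma(E)-\sigma(F)\sigma(B)\equiv 0 \op{mod} 4$, and both then finish with the same elementary arithmetic applied to the degree formula of Theorem \ref{thm6}(1) (for odd $y$ the term annihilated by $1-y^2$ is divisible by $8$, and the cases $y\equiv 1,3 \op{mod} 4$ are separated exactly as you do). The genuine difference is how duality is used in that crucial mod-$4$ step. The paper, following \cite{RY}, argues space by space: Theorem \ref{ihduality} pairs coefficients, $I\chi^p(X)=(-1)^nI\chi^{n-p}(X)$, yielding the congruence (\ref{pm1}) between $I\chi_1(X)$ and $I\chi_{-1}(X)$ modulo $4$ for each even-dimensional $X$, which is then combined with multiplicativity of the intersection Euler characteristic to compare $E$ with $F\times B$. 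You instead lift the congruence of Theorem \ref{thm6}(1) to an integral polynomial identity $(\star)$ and use duality globally, as the functional equation $y^{e}P(1/y)=P(y)$ for the difference polynomial $P$; cancelling $1-y^2$ and evaluating at $y=1$ gives $D(e-1)=-2Q(1)$, and oddness of $e-1$ forces $D$ to be even. The ingredients are identical (duality plus multiplicativity of $I\chi$, the latter entering your argument through the integrality of $D$ and through the fact that $P(-1)=0$ is built into $(\star)$), so the substance is the same; but your difference-level packaging buys a small robustness: the paper's intermediate claim (\ref{pm1}) is literally correct only with the sign $(-1)^{n/2}$ inserted (for $X=\mathbb{P}^2$ one has $I\chi_1=1$ while $I\chi_{-1}=3$), a sign that is harmless in the paper's step (b) because it is common to $E$ and $F\times B$, which have the same dimension, and which your functional-equation argument never needs to see. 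Your stated care points (integrality of $Q$ via monicity of $y^2-1$, the bound $\deg Q\le e-2$ needed so that $Q(1/y)y^{e-2}$ is a polynomial, and the degenerate cases where $e$ is odd or $e=0$) are all handled correctly or are trivially checkable as you indicate.
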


\begin{proof}
The proof of this result is formally exactly the same as the one of Theorem \ref{mod8} from \cite{RY}, and it rests on the following steps: 
 \begin{enumerate}
  \item[(a)] We have by (\ref{ichisig}) and Remark \ref{odd} that $I\chi_1(X)=\sigma(X)$ for $X$ pure-dimensional and projective, with  $I\chi_1(X)=\sigma(X)= 0$ for $X$ odd-dimensional. If $X$ is even-dimensional, then 
  \begin{equation}\label{pm1} I\chi_1(X) \equiv \; I\chi_{-1}(X) = I\chi(X) \op{mod} 4, \end{equation} 
  where the congruence follows from the duality result of Theorem \ref{ihduality}.
  \item[(b)] Since $I\chi(X)$ is multiplicative in complex algebraic fiber bundles $F\hookrightarrow E \to B$,  (\ref{pm1}) implies that 
  \begin{equation} \sigma(E) \equiv \sigma(F)\sigma(B) \op{mod} 4.\end{equation}
  \item[(c)] The assertions in the theorem follow now from the degree formula of Theorem \ref{thm6} (1), as in \cite[Theorem 3.11, Theorem 4.1]{RY}.
 \end{enumerate}
\end{proof}

\vspace{0.5cm}

\noindent
{\bf Acknowledgements:} The homological congruence formulae in this paper were presented during the second author's talk at the ``Workshop on Stratified Spaces: Perspectives from Analysis, Geometry and Topology, August 22 - 26", in the ``Focus Program on Topology, Stratified Spaces and Particle Physics, August 8 - 26, 2016, The Fields Institute for Research in Mathematical Sciences". The authors would like to thank the organizers 
of the workshop and the staff of The Fields Institute for the wonderful organization and for a wonderful atmosphere to work. The authors are grateful to J\"org Sch\"urmann and to the anonymous referees for carefully reading the manuscript, and for their valuable comments and constructive suggestions.



\begin{thebibliography}{99}

\bibitem{AB}
P. Aluffi and J.-P. Brasselet,
 \textit{Une nouvelle preuve de la concordance des classes d\'efinies par M.H. Schwartz et par R. MacPherson}.
Bull. Soc. Math. France \textbf{136}  (2008), no. 2,  159--166. 

\bibitem{At} 
M. F. Atiyah, {\it The signature of fibre-bundles}, 
in ``Global Analysis (Papers in Honor of K. Kodaira)”, Univ. Tokyo Press, Tokyo (1969), 73--84.

\bibitem{AS1}
M. F. Atiyah and I. M. Singer,
{\it The Index of Elliptic Operators on Compact Manifolds}, Bull. Amer. Math. Soc., {\bf 69} (1963), no. 3, 422--433.

\bibitem{AS2}
M. F. Atiyah and I. M. Singer,
{\it The Index of Elliptic Operators I}, Ann. of Math., {\bf 87} (1968), no. 3, 484--530. 

\bibitem{AS3}
M. F. Atiyah and I. M. Singer,
{\it The Index of Elliptic Operators III}, Ann. of Math., {\bf 87} (1968), no. 3, 546--604.

\bibitem{Banagl}
M. Banagl, {\it Topological and Hodge L-Classes of Singular Covering Spaces and Varieties with Trivial Canonical Class}, Geom Dedicata (2018). https://doi.org/10.1007/s10711-018-0345-2,arXiv:1706.03578v1 [math.AT].

\bibitem{BFM}
P. Baum, W. Fulton and R. MacPherson, {\it Riemann--Roch for singular varieties},
Publ. Math. I.H.E.S., {\bf 45} (1975), 101--145.

\bibitem{BSY2} 
J.-P. Brasselet, J. Sch\"urmann and S. Yokura, {\it Classes de Hirzebruch et classes de Chern motiviques}, C. R. Math. Acad. Sci. Paris, {\bf 342} (2006), no.5, 325--328.

\bibitem{BSY1} 
J.-P. Brasselet, J. Sch\"urmann and S. Yokura, {\it Hirzebruch classes and motivic Chern classes for singular spaces},  J. Topol. Anal., {\bf 2} (2010), no.1, 1--55.

\bibitem{BSY3}
J.-P. Brasselet, J. Sch\"urmann and S. Yokura, {\it Motivic and derived motivic Hirzebruch classes}, Homology Homotopy Appl., {\bf 18} (2016), no.2,  283--301.

\bibitem{BS}
J.-P. Brasselet and M.-H. Schwartz, 
\textit{  Sur les classes de Chern d'une ensemble analytique complexe}, in ``Caract\'eristique d'Euler--Poincar\'e", S\'eminaire E. N. S. 1978--1979, (ed. J.-L. Verdier),
Ast\'erisque, \textbf{ 82--83}, Soc. Math. France, Paris, 
(1981), 93--148.

\bibitem{CS}
S. E. Cappell and J. L. Shaneson, {\it  Stratifiable maps and topological invariants},
   J. Amer. Math. Soc., {\bf 4} (1991), no. 3, 521--551.
   
\bibitem{CMSS0} 
   S. E. Cappell, L. Maxim, J, Sch\"urmann and J. Shaneson, {\it    Characteristic classes of complex hypersurfaces}, 
Adv. Math., {\bf 225} (2010), no. 5, 2616--2647.
   
   \bibitem{CMSS} 
   S. E. Cappell, L. Maxim, J, Sch\"urmann and J. Shaneson, {\it  Equivariant characteristic classes of complex algebraic varieties}, 
Comm. Pure Appl. Math., {\bf 65} (2012), no. 12, 1722--1769.

\bibitem {CMS0}
S. E. Cappell, L. Maxim and J. Shaneson, {\it Euler characteristics of algebraic varieties},
Comm. Pure Appl. Math., {\bf 61} (2008), no. 3, 409--421.

\bibitem {CMS}
S. E. Cappell, L. Maxim and J. Shaneson, {\it Hodge genera of algebraic varieties, I.},
Comm. Pure Appl. Math., {\bf 61} (2008), no. 3, 422--449.

\bibitem {CLMS1}
S. E. Cappell, A. Libgober, L. Maxim  and J. Shaneson, 
{\it Hodge genera and characteristic classes of complex algebraic varieties}, 
Electron. Res. Announc. Math. Sci., {\bf 15} (2008), 1--7.

\bibitem {CLMS2}
S. E. Cappell, A. Libgober, L. Maxim and J. Shaneson, 
{\it Hodge genera of algebraic varieties, II.},
Math. Ann., {\bf 345} (2009), no. 4, 925--972.

\bibitem{CHS}
S. S. Chern, F. Hirzebruch and J.-P. Serre, {\it On the index of a fibered manifold}, 
Proc. Amer. Math. Soc., {\bf 8} (1957) 587--596.
  
\bibitem{Endo}
H. Endo, {\it A construction of surface bundles over surfaces with non-zero signature}, Osaka J. Math., {\bf 35} (1998), no. 4,  915--930.

\bibitem{GM}
 M.  Goresky and R. MacPherson,  {\it Intersection homology theory},
  Topology, {\bf 149} (1980), no. 2, 155--162.
  
  \bibitem{GM2} 
 M.  Goresky and R. MacPherson,  {\it Intersection homology. II.}, Invent. Math., {\bf 72} (1983), no. 1, 77--129.

\bibitem{HKR}
I. Hambleton, A. Korezeniewski and A. Ranicki,
{\it The signature of a fibre bundle is multiplicative mod 4},
Geom. Topol., {\bf 11} (2007), 251--314.

\bibitem{Hir0}
F. Hirzebruch, {\it The signature of ramified coverings}, in ``Global Analysis (Papers in Honor of K. Kodaira)”, Univ. Tokyo Press, Tokyo (1969), 253--265.

\bibitem{Hirzebruch}
F. Hirzebruch, {\it Topological Methods in Algebraic Geometry, 3rd ed. (1st German ed. 1956)},
Springer-Verlag, 1966.

\bibitem{HBJ}
F. Hirzebruch, T. Berger and R. Jung, {\it Manifolds and Modular Forms},
Vieweg, 1992.

\bibitem{Ko}
K. Kodaira, {\it A certain type of irregular algebraic surfaces}, J. Analyse Math., {\bf 19} (1967)
207--215.

\bibitem{Kotschick1}
D. Kotschick, {\it Characteristic numbers of algebraic varieties}, Proc. Natl. Acad. Sci. USA, {\bf 106} (2009), no. 25, 10114--10115.

\bibitem{Kotschick2}
D. Kotschick, {\it Topologically invariant Chern numbers of projective varieties}, Adv. Math., {\bf 229} (2012), no. 2, 1300--1312.

\bibitem{KS}
D. Kotschick and S. Schreieder, {\it The Hodge ring of K\"ahler manifolds}, Compositio. Math., {\bf 149} (2013), no. 4, 637--657.

\bibitem{K}
M. Kwieci\'nski, {\it Formule du produit pour les classes caract\'eristiques de Chern--Schwartz--MacPherson et homologie d'intersection}, C. R. Math. Acad. Sci.Paris, {\bf 314} (1992), no. 8, 625--628.

\bibitem{KY}
M. Kwieci\'nski and S. Yokura, {\it Product formula of the twisted MacPherson class}, Proc. Japan Acad. Ser. A Math. Sci., {\bf 68} (1992), no. 7, 167--171.

\bibitem{Looijenga}
E. Looijenga, {\it Motivic measures}, in ``S\'eminaire Bourbaki. Vol. 1999/2000, Expos\'es 865--879", Ast\'erisque, {\bf 276}, Soc. Math. France, Paris (2002), 267--297.

\bibitem{MacPherson}
R. MacPherson, {\it Chern classes for singular algebraic varieties},
Ann. of Math., {\bf 100} (1974), no. 2. 423--432.

\bibitem{MacPherson2}
 R. MacPherson, {\it Characteristic Classes for Singular Varieties},
  in ``Proceedings of the Ninth Brazilian Mathematical Colloquium, Voll.II, Po\c cos de Caldas 1973", 
 Inst. Mat. Pura Apl., S\~ao Paulo, (1977) , 321--327. 
 
 \bibitem{MSS}
L. Maxim, M. Saito and J. Sch\"urmann, {\it Symmetric products of mixed Hodge modules}
J. Math. Pures Appl., {\bf 96} (2011), no. 5, 462--483.

\bibitem{MS1}
L. Maxim and J. Sch\"urmann, {\it Hodge-theoretic Atiyah--Meyer formulae and the stratified multiplicative property}, in ``Singularities I: Algebraic and Analytic Aspects", Contemp. Math., {\bf 474}, Amer. Math. Soc., Providence, RI,  (2008), 145--166.

\bibitem{MS2}
L. Maxim and J. Sch\"urmann, {\it Characteristic classes of singular toric varieties},
Comm. Pure Appl. Math., {\bf 68} (2015), no. 12, 2177--2236.

\bibitem{Me72}
W. Meyer, {\it Die Signatur von lokalen Koeffizientensystemen und Faserb\"{u}ndeln}
Bonn. Math. Schr., {\bf 53} (1972), viii+59 pp.
\bibitem{Mey}
W. Meyer, {\it Die Signatur von Fl\"achenb\"undeln}, Math. Ann., {\bf 201} (1973), 239--264.

\bibitem{Rov}
C. Rovi, {\it The Signature Modulo $8$ of Fibre Bundles}, available at http://arxiv.org/pdf/1507.08328v1.pdf, Ph.D Thesis (Univ. of Edinburgh, UK), 2015.

\bibitem{Rov2}
C. Rovi, {\it The nonmultplicativity of the signature modulo 8 of a fiber bundle is the Arf--Kervaire invariant},  Algebr. Geom. Topol., {\bf 18} (2018), no. 3, 1281--1322.

\bibitem{RY}
C. Rovi and S. Yokura, 
{\it Hirzebruch $\chi_y$-genera modulo $8$ of fiber bundles for odd integers $y$}, Pure Appl. Math. Q., {\bf 12} (2016), no. 4, 587--602.

\bibitem{Saito}
M. Saito, {\it Mixed Hodge Modules},
 Publ. Res. Inst. Math. Sci., {\bf 26} (1990), no. 2, 221--333.

\bibitem{Sch}
J. Sch\"urmann, {\it Characteristic classes of mixed Hodge modules}, in ``Topology of Stratified Spaces'',  Math. Sci. Res. Inst. Publ., {\bf 58}, Cambridge Univ. Press, Cambridge, (2011), 419--470.

\bibitem{SY}
J. Sch\"urmann and S. Yokura, {\it A survey of characteristic classes of singular spaces}, 
in ``Singularity Theory", Marseille, 2005  (eds. D. Ch\'eniot et al), World Sci. Publ., Hackensack, NJ, (2007), 865--952.

\bibitem{Schw1}
M.-H. Schwartz,
\textit{ Classes caract{\'e}ristiques d{\'e}finies par une
stratification d'une vari{\'e}t{\'e}s analytique complexe. I; II},
C. R. Acad. Sci.Paris, \textbf{260} 
(1965), I, 3262--3264; II, 3535--3537.

\bibitem{Schw2}
M.-H. Schwartz,
\textit{ Classes et caract\`eres de Chern des espaces lin\'eaires},
 Publ. U.E.R. Math. Pures Appl. IRMA {\bf 2} (1980), no. 3, exp. no. 3, 41 pp.,
 and C. R. Acad. Sci. Paris S\'er. I, \textbf{295} (1982), no. 5, 399--402.

\bibitem{Yokura-TAMS}
S. Yokura,  {\it On Cappell--Shaneson's homology L-class of singular algebraic varieties},
   Trans. Amer. Math. Soc., {\bf 347} (1995), no. 3, 1005--1012. 

\bibitem{Yokura-Banach}
S. Yokura, {\it A singular Riemann--Roch theorem for Hirzebruch characteristics}, 
in ``Singularities Symposium --\L ojasiewicz 70 (Krak\'ow, 1996; Warsaw, 1996)", Banach Center Publ., {\bf 44}, Polish Acad. Sci. Inst. Math., Warsaw, 
(1998), 257--268.    
   
\bibitem{YokuraMSRI}
S. Yokura, {\it Motivic characteristic classes}, in ``Topology of Stratified Spaces",  Math. Sci. Res. Inst. Publ., {\bf 58}, Cambridge Univ. Press, Cambridge, (2011), 375--418.

\bibitem{Yo2}
S. Yokura, {\it Hirzebruch $\chi_y$-genera of complex algebraic fiber bundles
--- the multiplicativity of the signature modulo $4$ ---}, in ``Schubert varieties, equivariant cohomology and characteristic classes --IMPANGA 15", EMS Ser. Congr. Rep., Eur. Math. Soc., Z\"urich, (2018), 315--330. 
\end{thebibliography}
\end{document}